\renewcommand{\epsilon}{\varepsilon}
\newcommand{\ds}{\displaystyle}
\theoremstyle{plain}
\newtheorem{thm}{Theorem}[section]
\newtheorem{prop}[thm]{Proposition}
\newtheorem{cor}[thm]{Corollary}
\newtheorem{exam}[thm]{Example}
\theoremstyle{definition}
\newtheorem{ass}[thm]{Assumption}
\newtheorem{dfn}[thm]{Definition}
\theoremstyle{remark}
\newtheorem{rem}[thm]{Remark}
\numberwithin{equation}{section}
\title{Holonomy preserving transformations of weighted graphs\\
and its application to knot theory}    
\author{Atsuhide Nagasaka}
\begin{document}

\begin{abstract}
Goda~\cite{H-Goda} showed that the twisted Alexander polynomial can be recovered from the zeta function of a matrix-weighted graph. Motivated by this, we study transformations of weighted graphs that preserve this zeta function, introducing a notion of holonomy as an analogy for the accumulation of weights along cycles.

We extend the framework from matrices to group elements, and show that holonomy preserving transformations correspond to transformations of group presentations and preserve the twisted Alexander polynomial from a graph-theoretic viewpoint. We also generalize to quandle-related structures, where the holonomy condition coincides with the Alexander pair condition of Ishii and Oshiro~\cite{Ishii-Oshiro}. This perspective allows us to view knot diagrams as covering-like structures enriched with holonomy.
\end{abstract}

\maketitle

\tableofcontents

\section{Introduction}

The twisted Alexander polynomial, introduced by Lin~\cite{Lin} and later formulated using group presentations by Wada~\cite{M-Wada}, is a powerful knot invariant defined from a group presentation and a representation. It generalizes the classical Alexander polynomial, introduced by J.~W.~Alexander~\cite{Alexander}, and can distinguish knots that other invariants cannot. For example, it separates the Kinoshita--Terasaka knot, the Conway knot, and the unknot using a representation into $SL_2(\mathbb{F}_7)$.

Goda~\cite{H-Goda} showed that the twisted Alexander polynomial can be recovered as the reciprocal of a zeta function associated with a matrix-weighted graph. This matrix-weighted zeta function, introduced by Watanabe and Fukumizu~\cite{Watanabe-Fukumizu}, is defined by
\[
    \zeta_G(w) = \prod_{[C]} \det(I - w(C))^{-1},
\]
where the product is over equivalence classes of cycles $[C]$ in the graph $G$, and $w(C)$ denotes the product of matrices along $C$. Goda showed that for a knot diagram $K_{D^*}$ with a base point and a representation $\rho: \pi_K \to GL_n(\mathbb{C}[t^{\pm1}])$, one has
\[
    \Delta_{K_{D^*}, \rho}(t) = \zeta_{G,\rho}(t)^{-1},
\]
where $\zeta_{G,\rho}(t)$ denotes the matrix-weighted zeta function with weights given by $\rho$.

The matrix zeta function can be viewed as a generating function for a notion of {\it holonomy}---an accumulation of weights along cycles---in a graph. From this viewpoint, Goda generalized the known relationship between the Ihara zeta function~\cite{Ihara} and the Alexander polynomial.

To understand the invariance of the twisted Alexander polynomial via this framework, we introduce {\it holonomy preserving transformations} on weighted graphs. These transformations preserve the matrix-weighted zeta function:
\begin{thm}
  \[
  (G,w) \sim (G',w') \Longrightarrow \zeta_G(w) = \zeta_{G'}(w').
  \]
  \hfill{$\square$}
\end{thm}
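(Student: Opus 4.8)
The plan is to reduce the statement to a check on the elementary moves that generate the equivalence relation $\sim$, and to isolate the one algebraic fact that makes each such check routine. Recall that $\zeta_G(w) = \prod_{[C]} \det(I - w(C))^{-1}$ is a product over equivalence classes of reduced primitive cycles, in which the weight $w(C)$ is only canonically defined up to conjugacy: cyclically rotating the base point of $C$ replaces $w(C)$ by a conjugate. The key observation is therefore that each factor depends only on the conjugacy class of the holonomy $w(C)$, since $\det(I - A B A^{-1}) = \det(I - B)$ for every invertible $A$ (indeed $I - ABA^{-1} = A(I-B)A^{-1}$). Consequently, to prove the theorem it suffices to produce, for each elementary holonomy preserving move $(G,w) \to (G',w')$, a bijection $[C] \mapsto [C']$ between the cycle classes of the two weighted graphs under which $w(C)$ and $w'(C')$ are conjugate, and then to note that the product converges in the appropriate completion (cycles graded by length) so that this rearrangement is legitimate.

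I would then treat the moves in two groups. The purely gauge-type moves, which keep the underlying graph fixed and only conjugate the weights incident to a chosen vertex $v$ by some invertible $P_v$, leave the holonomy of every closed walk unchanged, because the inserted $P_v$ and $P_v^{-1}$ telescope along the walk; here the two products agree term by term. The genuinely combinatorial moves --- subdividing an edge while factoring its weight, its inverse (contracting a bivalent vertex), and whatever creation/annihilation of inessential edges corresponds to Tietze moves on the associated group presentation --- do change the set of cycles, but in a controlled way: a cycle running through the affected edge has its holonomy merely regrouped (schematically $\cdots(w_2 w_1)\cdots \leftrightarrow \cdots w_2 w_1 \cdots$), hence conjugate, while every cycle avoiding the modified region is matched with its obvious image and has literally the same holonomy. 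Composing the bijections obtained move by move yields the global correspondence needed for $\zeta_G(w) = \zeta_{G'}(w')$.

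The main obstacle, and the place where the definition of ``holonomy preserving'' has to be used in full, is the bookkeeping for moves that do not merely rename cycles but actually change their number: one must check that every cycle class created or destroyed by such a move carries holonomy that contributes a factor equal to $1$ (so that it can be dropped from the product), and that no new backtracking-free closed walk with nontrivial holonomy is accidentally introduced near the modified part of the graph. This is precisely the content of the holonomy condition and requires a careful analysis of reduced walks through the affected vertices. As an alternative that sidesteps some of this cycle-by-cycle bookkeeping, one can instead invoke the Watanabe--Fukumizu determinant formula \cite{Watanabe-Fukumizu}, $\zeta_G(w) = \det(I - W_G)^{-1}$ with $W_G$ the weighted edge-adjacency (Hashimoto) matrix, and reduce the theorem to the identity $\det(I - W_G) = \det(I - W_{G'})$; each elementary move then corresponds to an explicit block operation on $W_G$ --- a conjugation for the gauge moves, a Schur-complement/elimination step for subdivision and contraction --- under which the determinant is manifestly preserved. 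I expect the determinant route to be the cleanest for the combinatorial moves and the holonomy route to be the more conceptual one, so I would present the former as the formal argument and keep the latter as motivation.
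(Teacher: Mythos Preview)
Your overall architecture---check each generator of $\sim$ separately, use a cycle-class bijection where one exists, and fall back on the determinant formula where it does not---is exactly the paper's strategy. However, you are guessing at the list of elementary moves, and the guess misses the one move for which your primary argument breaks down.

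The generators of $\sim$ in Definition~\ref{Def-MatWtGraphTrfm} are: (1) change of basis at a vertex, (2) inserting or deleting an edge of weight $0$, (3) replacing finitely many parallel edges $v\to v'$ with weights $w_1,\dots,w_n$ by a single edge of weight $\sum w_i$, (4) deleting a source or sink, and (5) the hub-vertex resolution. There is no subdivision or bivalent-contraction move in the list; that operation appears only as a derived example. For (1), (2), (4) your argument is fine and the paper dismisses them as obvious; for (5) the paper gives precisely the cycle bijection you describe, pairing $(\dots,e,f_i,\dots)$ with $(\dots,e_i',\dots)$.

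The genuine gap is move (3). Here there is \emph{no} bijection between prime cycle classes, and the cycles that disappear do \emph{not} have holonomy contributing a trivial factor. Already with two vertices, two parallel edges $a\to b$ of weights $w_1,w_2$ and a return edge $b\to a$ of weight $u$, the left-hand graph has infinitely many prime cycles (e.g.\ $w_1u$, $w_2u$, $w_1uw_2u$, \dots) while the right-hand graph has exactly one, with holonomy $(w_1+w_2)u$. The identity $\det(I-(w_1+w_2)u)^{-1}=\prod \det(I-w(C))^{-1}$ is not a termwise match and cannot be obtained by dropping factors equal to $1$. The paper handles (3) exactly via your alternative route, invoking Theorem~\ref{Thm-IharaBassDetFomula}: but note that the relevant matrix is the \emph{vertex}-indexed adjacency matrix $A(G,w)$, whose $(i,j)$ entry is already the \emph{sum} of weights over all edges $v_i\to v_j$. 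Thus both sides of move (3) have literally the same $A(G,w)$, and invariance is immediate---no Schur complement or Hashimoto edge matrix is needed. Once you replace your conjectured move list by the actual one and route move (3) through the vertex-adjacency determinant, your proof is complete and coincides with the paper's.
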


We then generalize this setup by extending weights from matrices to group elements under certain conditions. We define transformations of group-weighted graphs and relate them to transformations of group presentations.

\begin{thm}
    Let $(X,R,B), (X',R',B')$ be two presentations of a group $G$ that are strongly Tietze equivalent preserving base points. Then, $\Gamma(X,R,B) \sim \Gamma(X',R',B')$. \hfill{$\square$}
\end{thm}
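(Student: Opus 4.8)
The plan is to reduce to a single elementary transformation and then realize it by holonomy preserving moves on the associated graph. Recall that $\sim$ is an equivalence relation generated by the elementary holonomy preserving transformations, so in particular it is transitive; and, by definition, $(X,R,B)$ being strongly Tietze equivalent to $(X',R',B')$ preserving base points means that the latter is obtained from the former by a finite chain of elementary Tietze moves, each fixing the distinguished base points. Hence it suffices to assume that $(X',R',B')$ differs from $(X,R,B)$ by exactly one such move and to produce a finite sequence of holonomy preserving transformations carrying $\Gamma(X,R,B)$ to $\Gamma(X',R',B')$; concatenating these over the chain then proves the theorem.

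I would split the elementary Tietze moves into two groups and treat them separately. The first group comprises the \emph{relator-level} moves: permuting the relators, inverting a relator, conjugating a relator by a word, and (in the variants that are part of strong Tietze equivalence) inserting or deleting a relator that is forced by the others. On the graph side each relator $r$ determines a closed walk through $\Gamma$ whose consecutive edge-weights are the images under the representation of the Fox derivatives $\partial r/\partial x$; reordering, reversing, or re-rooting these walks is holonomy preserving directly from the definitions of $\Gamma$ and of the elementary moves, and attaching the walk of a forced relator glues on a configuration whose every new cycle has trivial holonomy, hence is an instance of the corresponding elementary move. The second group is the \emph{generator-level} move: adjoining a fresh generator $y$ with defining relation $yw^{-1}$, where $w$ is a word in the old generators (and its inverse). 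This corresponds graph-theoretically to an expansion that routes every edge carrying the accumulated value of $w$ through a new vertex associated with $y$; the identity that legitimizes it is the Fox chain rule, $\frac{\partial}{\partial x}\,u(y) = \frac{\partial u}{\partial y}\cdot\frac{\partial w}{\partial x}$, together with $\frac{\partial}{\partial y}\bigl(yw^{-1}\bigr)=1$, which is precisely the compatibility ensuring the expansion decomposes into elementary holonomy preserving moves.

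The main obstacle will be the generator-level move. The difficulty is less the chain rule itself than the bookkeeping around it: one must write down an explicit intermediate chain of \emph{admissible} graph moves that implements the simultaneous substitution of $w$ for $y$ in all relators, and check that the defining conditions on group-weighted graphs — the holonomy condition around every cycle, i.e.\ the analogue in this setting of the Alexander-pair/derivation conditions — survive at every intermediate stage and not merely at the two endpoints. Carrying the base-point data $B$ coherently through this process, so that the entire composition is base-point preserving on the graph side, is the second delicate point. Once the generator-level move is settled, the relator-level moves are comparatively routine, and the concatenation argument of the first paragraph completes the proof.
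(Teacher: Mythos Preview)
Your reduction to a single elementary Tietze move is correct, but you have misidentified where the real work lies, and your description of the graph construction is off in a way that hides this.

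In the paper's construction, vertices of $\Gamma(X,R,B)$ are generators, and the relation $r_i:x_i=f_i$ (with $x_i$ the base point) contributes the edges \emph{out of} $v_i$, weighted by the Fox derivatives $\mathrm{pr}(\partial f_i/\partial x_j)$. A relator does not ``determine a closed walk through $\Gamma$'' that one can reorder or re-root; it determines the outgoing star of a single vertex. With this in mind, moves (1) and (2) (inversion, conjugation) are genuinely trivial because the base point is fixed, and the generator-level move (4), which you flag as the main obstacle, is in fact the easy case: adding $y$ with relation $y=w$ creates a new vertex $y$ with only outgoing edges (weights $\mathrm{pr}(\partial w/\partial x_j)$), which is exactly what the elementary transformation (G3) (source elimination) removes. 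No chain rule or delicate bookkeeping is needed.

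The nontrivial move is (3), replacing $r_i$ by $r_ir_k$ (with $i\neq k$), which you do not mention at all --- your list of relator-level moves includes ``inserting or deleting a relator that is forced by the others,'' which is the ordinary (not strong) Tietze move and is not on the list here. Under move (3), the outgoing edges at $v_i$ change from those encoded by $df_i$ to those encoded by $df_i \pm w\,dr_k$ for a suitable $w$, and one must exhibit an explicit finite sequence of the elementary graph moves (G1)--(G4) realizing this change. The paper does this by splitting an edge weight as $u_j=(u_j-w)+w$ via (G2), applying hub vertex resolution (G4) to the $w$-edge through $v_j$ (which reroutes it along the outgoing star of $v_j$, picking up the factors $w_l$ from $df_k$), and then recombining via (G2). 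This computation is the heart of the proof, and your proposal contains no indication of it.
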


\begin{cor}
  Let $(X,R,B)$ be a finitely presented group, and $\Gamma(X,R,B)$ be a group-weighted graph made by $(X,R,B)$. We construct a $\Gamma$ which is equivalent to $\Gamma(X,R,B)$ through a finite sequence of elementary transformations. Then there exists a finitely presented group that satisfies the following properties:
  \begin{itemize}
    \item $\Gamma = \Gamma(X', R', B')$.
    \item $(X,R)$ and $(X',R')$ are strongly Tietze equivalent.\hfill{$\square$}
  \end{itemize}
\end{cor}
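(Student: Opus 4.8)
The plan is to prove the corollary by a careful bookkeeping argument that "upgrades" an abstract chain of elementary transformations on graphs to an actual presentation, by running the logic of Theorem 1.2 (the preceding theorem) in reverse at each step. First I would set up notation: we have a group-weighted graph $\Gamma(X,R,B)$ built from the presentation $(X,R,B)$, and a finite sequence
\[
\Gamma(X,R,B) = \Gamma_0 \to \Gamma_1 \to \cdots \to \Gamma_k = \Gamma,
\]
where each arrow is a single elementary transformation. The key observation is that the construction $\Gamma(-)$ is, up to the holonomy-preserving equivalence, essentially invertible on its image: a group-weighted graph that "looks like" one arising from a presentation carries enough combinatorial data (the vertex/edge structure encoding generators, the cycles encoding relators, the marked base points $B$) to reconstruct a presentation. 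So the real content is to show that each elementary graph transformation, applied to a graph of the form $\Gamma(X_i,R_i,B_i)$, lands on a graph of the form $\Gamma(X_{i+1},R_{i+1},B_{i+1})$, and that the passage $(X_i,R_i)\rightsquigarrow(X_{i+1},R_{i+1})$ is one of the moves generating strong Tietze equivalence (or a composite of finitely many such moves).

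The key steps, in order, would be: (1) enumerate the elementary transformations of group-weighted graphs (these are presumably defined in the body of the paper — edge subdivisions, handle slides, stabilizations, changes of weight by a coboundary, etc.) and, for each one, verify that it preserves the property "is of the form $\Gamma(X',R',B')$ for some presentation." (2) For each elementary transformation, read off explicitly the induced operation on the presentation data $(X,R,B)$ and check it is a strong-Tietze move preserving base points; this is the dictionary direction opposite to Theorem 1.2, so I expect the same correspondence used there to do the work, just traversed the other way. (3) Compose over the chain $\Gamma_0 \to \cdots \to \Gamma_k$: since strong Tietze equivalence is an equivalence relation, the finite concatenation of the moves from step (2) gives that $(X,R)$ and $(X',R')$ are strongly Tietze equivalent, and the terminal presentation satisfies $\Gamma = \Gamma(X',R',B')$ by construction. (4) Handle the base points: track $B$ through each move and confirm the base-point-preserving condition is maintained, so that Theorem 1.2 and its converse genuinely apply at the level we need.

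The main obstacle I anticipate is step (1)–(2): showing that the image of the construction $\Gamma(-)$ is \emph{closed} under elementary transformations. A priori, an elementary transformation might produce a weighted graph that is holonomy-equivalent to something of the form $\Gamma(X',R',B')$ without literally being equal to one — for instance, it could create a vertex of the wrong valence, or a cycle structure that does not directly correspond to a relator word. Resolving this requires either (a) proving that every elementary transformation can be factored as a composite whose net effect stays within the image, possibly after inserting "normalizing" transformations, or (b) enlarging the notion of presentation-graph slightly so that the image is manifestly closed, and then checking that this enlargement does not change the associated group or Tietze class. I would lean toward approach (a), mirroring how Tietze's theorem is usually proved (any two presentations of the same group differ by a \emph{finite} sequence of the four standard moves), and using the finiteness of the given chain to keep the induction manageable. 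The remaining steps are then essentially bookkeeping: one argues by induction on $k$, the length of the chain, with the inductive step being a single application of the closure property from step (1) together with the dictionary from step (2).
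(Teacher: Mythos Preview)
Your proposal is essentially correct and matches the paper's intent. The paper gives no detailed proof of this corollary at all: it simply asserts, in one sentence preceding the statement, that ``the affirmative answer to this question is proved by using'' the preceding theorem (your ``Theorem~1.2''), and then marks the corollary with a box. Your plan to run the dictionary in reverse---checking, for each elementary transformation (G1)--(G4), that it corresponds to a strong Tietze move on the underlying presentation, and then inducting on the length of the chain---is exactly the natural way to flesh this out.

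On your anticipated obstacle (closure of the image of $\Gamma(-)$ under elementary moves): this is a legitimate concern, but the paper's specific definitions of (G1)--(G4) are engineered to dissolve it. The move (G3) carries an explicit integrability hypothesis---the weights $w_{i_j}$ must satisfy $df = \sum_j w_{i_j}\,dx_{i_j}$ for some word $f$---which is precisely what is needed to interpret the added/removed source vertex as a new generator $y$ with relation $y=f$ (Tietze move~(4)). The move (G4) implements the chain rule for free derivatives, so the new weights $uw_k$ are again Fox derivatives of a word obtained by substitution, corresponding to Tietze move~(3). Moves (G1) and (G2) only adjust the multiplicity and bookkeeping of edges without changing the underlying row data of the Jacobian, so they do not alter the presentation at all. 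Thus your approach~(a)---normalizing after each step---works directly, and no enlargement of the target class is needed.
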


As a result, we provide a graph-theoretic proof of the invariance of the twisted Alexander polynomial. Finally, we generalize the notion of weights further to algebraic data arising from quandles. We show that holonomy preserving transformations in this setting correspond to {\it Alexander pairs}, a notion introduced by Ishii and Oshiro~\cite{Ishii-Oshiro}.

\begin{thm}
    The following are equivalent.
    \begin{itemize}
        \item[(1)] $g=(g_{1}^{+}, g_{1}^{-}, g_{2}^{+}, g_{2}^{-})$ preserves holonomy, that is, (A) through (C) holds.
        \item[(2)] $g=(g_{1}^{+}, g_{1}^{-}, g_{2}^{+}, g_{2}^{-})$ is f-twisted Alexander weight.\hfill{$\square$}
    \end{itemize}
\end{thm}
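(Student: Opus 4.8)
The plan is to prove the equivalence by unwinding both sides into explicit systems of equations and checking that they coincide after rearrangement; in other words, the theorem is essentially a dictionary statement, asserting that the three holonomy conditions (A)--(C) are precisely the defining relations of an f-twisted Alexander weight in the sense adapted from the Alexander pair of Ishii--Oshiro. So rather than a structural argument I would carry out a finite, local verification in both directions, one basic crossing configuration at a time.

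For $(1)\Rightarrow(2)$: assume $g=(g_1^+, g_1^-, g_2^+, g_2^-)$ preserves holonomy. Each of (A), (B), (C) records the invariance of the accumulated weight around one family of basic cycles in the quandle-decorated graph attached to a crossing (I expect: the positive configuration, the negative configuration, and the cycle running through a crossing together with its reverse). For each such cycle I would write the holonomy as the ordered product of the incident edge weights drawn from $g_1^\pm, g_2^\pm$, equate it with the holonomy of the corresponding reference cycle, multiply out, and use invertibility of the $g_1^\pm$ to solve. What drops out is exactly the Alexander-pair data: the multiplicative cocycle identity relating $g_1^+(a*b,c)$, $g_1^+(a,b)$, $g_1^+(a*c,b*c)$, $g_1^+(a,c)$ (and its analogue with some entries replaced by their $-$ versions), together with the twisted affine identity coupling the $g_2$'s to the $g_1$'s. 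Assembling these is the definition of an f-twisted Alexander weight.

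For $(2)\Rightarrow(1)$: conversely, given that $g$ is an f-twisted Alexander weight, I would show holonomy is preserved under every holonomy-preserving transformation by reducing to the generating moves. Since holonomy is multiplicative along a cycle, and every admissible local modification alters a cycle only inside one of the basic configurations appearing in (A)--(C), it suffices to verify preservation there; each such check is just reading the corresponding Alexander-pair identity backwards, together with the compatibility that expresses $g_1^-$ and $g_2^-$ through $g_1^+$ and $g_2^+$ and the dual operation $\bar{*}$.

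The main obstacle, and the step deserving the most care, is precisely this interplay between the four maps and the orientation/sign bookkeeping. One has to verify that consistency around a crossing and its mirror forces $g_1^-(a,b)$ and $g_2^-(a,b)$ to equal the expected expressions in $g_1^+, g_2^+$ precomposed with $\bar{*}$, so that the two a priori independent halves of the data in fact determine one another; and the holonomy computations around the "mixed" cycles, where a $+$ edge abuts a $-$ edge, are where ordering and inversion errors are easiest to make. Once the holonomy of each basic cycle is written down in the correct order (respecting the $2\times 2$ composition convention the paper fixes), the remaining manipulations are routine.
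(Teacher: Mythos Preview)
Your overall strategy is correct and matches the paper's: both directions are handled by unwinding the explicit equations (A)--(C), using the invertibility coming from (B) to solve for one half of the data in terms of the other, and then matching against the Alexander-pair axioms; the converse is the same computation read backwards. One correction worth making before you execute: your guess at what (A)--(C) encode is off. They are not ``positive configuration / negative configuration / mixed cycle'' but rather the holonomy-preservation conditions coming from Reidemeister moves R1, R2, R3 respectively---(A) is the R1 condition $g_1^-(X_i,X_{i+1})+g_2^-(X_i,X_{i+1})=1$, (B) comprises the R2 conditions forcing $g_1^\pm$ to be mutually inverse (after a $\ast$-shift) and tying $g_2^\pm$ together, and (C) is the R3 condition that yields the three cocycle-type identities in Definition~\ref{Def-Alexander-pair}(3). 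The paper makes the extraction explicit by setting $f_1(X_i,X_j):=g_1^-(X_i\ast X_j,X_j)$ and $f_2(X_i,X_j):=g_2^-(X_i\ast X_j,X_j)$, from which $g_1^+ = f_1^{-1}$ and $g_2^+ = -f_1^{-1}f_2$ follow from (B); with this dictionary in hand, (A) becomes axiom (1), (B) gives axiom (2), and (C) becomes axioms (3)(a)--(c). Once you adjust your picture of (A)--(C) accordingly, your plan goes through exactly as the paper's does.
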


\section*{Acknowledgements}
I would like to express my deepest gratitude to my parent for their unwavering support. I am also deeply grateful to Associate Professor Akishi Kato for his invaluable advice and insightful comments. Additionally, I would like to thank Kohei Takehira, Masahiro Watanabe and Yuya Murakami for their helpful discussions and feedback.

\section{Matrix-weighted graphs and their transformations}
In this section, we introduce the notion of a matrix-weighted zeta function of a matrix-weighted graph $G$, following \cite{Mat-Wt-Graph-Intro}. We then define transformations of a matrix-weighted graph that preserve the zeta function.

Let $G = (V(G), E(G))$ be a connected, directed, and finite graph, possibly with multiple edges and loops, where $V(G)$ and $E(G)$ denote the sets of vertices and oriented edges, respectively. If an oriented edge $e \in E(G)$ is from $a \in V(G)$ to $b \in V(G)$, we denote $a = o(e)$ and $b = t(e)$.

A {\it path} $P$ of length $n$ in $G$ is a sequence $P = (e_{1}, \ldots, e_{n})$ of $n$ arcs, $e_{i} \in D(G), t(e_{i}) = o(e_{i+1})\ (1\leq i \leq n-1)$. The indices are considered modulo $n$. A path $P$ is called a {\it cycle} when $o(e_{1}) = t(e_{n})$. 

We define an equivalence relation between two cycles. Two cycles $C_{1} = (e_{1}, \ldots, e_{n})$ and $C_{2} = (f_{1}, \ldots, f_{n})$ are called {\it equivalent} if there exists an integer $k$ such that $f_{j} = e_{j+k}$ holds for all $j$. We denote the equivalence class containing a cycle $C$ as $[C]$. Let $C^{r}$ represent the cycle obtained by traversing around cycle $C$ for $r$ times, which is called a {\it power} of $C$. A cycle $C$ is said to be {\it prime} if it cannot be represented as a power of a strictly smaller cycle. 

Then, we define the matrix-weighted zeta function of a graph $G$. Suppose $G$ has $n$ vertices $v_{1}, \ldots, v_{n}$ and $m$ edges. Let $(a_{1}, \ldots, a_{n}) \in (\mathbb{Z}_{>0})^{n}$ and set $a_{v_{i}} = a_{i}\ (1 \leq i \leq n)$. For each edge $e$ from $v_{i}$ to $v_{j}$, let $w = w(v_{i}, v_{j})$ be an $a_{i} \times a_{j}$ matrix over $\mathbb{C}[t^{\pm 1}]$\footnote{
Generally, weights need not be elements of $\mathbb{C}[t^{\pm 1}]$, but due to our primary focus on studying a relation between zeta function and twisted Alexander polynomial in \S 4, we consider the weights to be elements of $\mathbb{C}[t^{\pm 1}]$.
}. The set $\{w(e) | e\in E(G)\}$ is called the matrix-weight of $G$, and $(G,w)$ is called the {\it matrix-weighted graph}.

\begin{center}
  \begin{tikzpicture}[x=1.4cm,y=1.4cm]
   \node (A1) at (-1.5,0) {};
   \node (A2) at (1.5,0) {};
   \draw (A1) node [below left] {$v_{i}$};
   \draw (A2) node [below right] {$v_{j}$};
   \draw (A1) node [above] {};
   \draw (A2) node [above] {};

   \fill (A1) circle [radius=1pt];
   \fill (A2) circle [radius=1pt];

   \draw[->] (A1) to node[below]{$
   \displaystyle
   \begin{bmatrix}
     a_{1,1}& \cdots & a_{1,d_{j}} \\
     \vdots & \ddots & \vdots \\
     a_{d_{i},1} & \cdots & a_{d_{i},d_{j}}
   \end{bmatrix}
   $} (A2);

  \end{tikzpicture}
\end{center}

\begin{dfn}[{\cite{Watanabe-Fukumizu}}] \label{Def-Of-zeta}
    For each cycle $C = (e_{1}, \ldots, e_{n})$, we define the weight of the cycle $C$ as the products of the weights of the edges:
    \[
        w(C) = w(e_{1}) \ldots w(e_{n}).
    \]
    The {\it matrix-weighted zeta function} $\zeta_{G}(w)$ of $(G,w)$ is defined by
    \begin{align}
        \zeta_{G}(w) = \prod_{[C]} \det(I - w(C))^{-1},
    \end{align} \label{Def-Of-zeta-siki}
    where $[C]$ runs over all equivalence classes of prime cycles of $G$.\hfill{$\square$}
\end{dfn} 

The term $I - w(C)$ in (\ref{Def-Of-zeta-siki}) represents the transformation of basis when traversing a cycle $C$. Here, {\it holonomy} does not refer to the classical notion of parallel transport in differential geometry but rather serves as an analogy for the evolution of weights (or representations) along graph cycles. In this sense, the matrix-weighted zeta function can be regarded as a generating function that encodes the holonomy of matrix-weighted graphs. We now define transformations of matrix-weighted graphs, preserving their holonomy.

\begin{dfn} \label{Def-MatWtGraphTrfm}
  For a matrix-weighted graph, the following transformations apply to a matrix-weighted graph, modifying a part of the graph as described below or reversing all edges in the diagram. We call these operations {\it elementary transformations of a matrix-weighted graph}. In the following figures, edges without arrows can be directed in both directions.

  \begin{itemize}
    \item[(1)] (change of basis)
    \begin{center}
      \begin{tikzpicture}[x=1.4cm,y=1.4cm]

        \node (A1) at (-0.5,0.5) {};
        \node (Ai) at (-0.5,0) {$\vdots$};
        \node (An) at (-0.5,-0.5) {};
        \node (B1) at (2.5,0.5) {};
        \node (Bi) at (2.5,0) {$\vdots$};
        \node (Bn) at (2.5,-0.5) {};
        \node (O) at (1,0) {};

        \fill (A1) circle [radius=1pt];
        \fill (An) circle [radius=1pt];
        \fill (O) circle [radius=1pt];
        \fill (B1) circle [radius=1pt];
        \fill (Bn) circle [radius=1pt];

        \draw[line width=1pt, arrows={ - latex}] (A1) to node[above]{$w_{1}$} (O);
        \draw[line width=1pt, arrows={ - latex}] (An) to node[below]{$w_{n}$} (O);
        \draw[line width=1pt, arrows={ - latex}] (O) to node[above]{$u_{1}$} (B1);
        \draw[line width=1pt, arrows={ - latex}] (O) to node[below]{$u_{m}$} (Bn);

        \draw[arrows={triangle 90-triangle 90}] (3,0) -- (4,0);

        \node (A1') at (4.5,0.5)  {};
        \node (Ai') at (4.5,0)  {$\vdots$};
        \node (An') at (4.5,-0.5) {};
        \node (B1') at (7.5,0.5) {};
        \node (Bi') at (7.5,0) {$\vdots$};
        \node (Bn') at (7.5,-0.5) {};
        \node (O') at (6,0) {};

        \fill (A1') circle [radius=1pt];
        \fill (An') circle [radius=1pt];
        \fill (O') circle [radius=1pt];
        \fill (B1') circle [radius=1pt];
        \fill (Bn') circle [radius=1pt];

        \draw[line width=1pt, arrows={ - latex}] (A1') to node[above]{$w_{1}P^{-1}$} (O');
        \draw[line width=1pt, arrows={ - latex}] (An') to node[below]{$w_{n}P^{-1}$} (O');
        \draw[line width=1pt, arrows={ - latex}] (O') to node[above]{$Pu_{1}$} (B1');
        \draw[line width=1pt, arrows={ - latex}] (O') to node[below]{$Pu_{m}$} (Bn');

      \end{tikzpicture}
    \end{center}

    \item[(2)] (null edge)
    \begin{center}
      \begin{tikzpicture}[x=1.4cm,y=1.4cm]
        \node (A) at (0,0) {};
        \node (B) at (1,0) {};

        \fill (A) circle [radius=1pt];
        \fill (B) circle [radius=1pt];


        \draw[arrows={triangle 90-triangle 90}] (1.5,0) -- (2.5,0);

        \node (A') at (3,0) {};
        \node (B') at (4,0) {};

        \fill (A') circle [radius=1pt];
        \fill (B') circle [radius=1pt];

        \draw[line width=1pt] (A') to node[above]{$0$} (B');

      \end{tikzpicture}
    \end{center}

    \item[(3)] (summing weights)
    \begin{center}
      \begin{tikzpicture}[x=1.4cm,y=1.4cm]
        \node (A1) at (-1,0.5) {};
        \node (Ai) at (-1,0) {$\vdots$};
        \node (An) at (-1,-0.5) {};
        \node (B1) at (2,0.5) {};
        \node (Bi) at (2,0) {$\vdots$};
        \node (Bn) at (2,-0.5) {};
        \node (O1) at (0,0) {};
        \node (O2) at (1,0) {};
        \node (wi) at (0.5,0.05) {$\vdots$};

        \fill (A1) circle [radius=1pt];
        \fill (An) circle [radius=1pt];
        \fill (O1) circle [radius=1pt];
        \fill (O2) circle [radius=1pt];
        \fill (B1) circle [radius=1pt];
        \fill (Bn) circle [radius=1pt];

        \draw[line width=1pt] (A1) -> (O1);
        \draw[line width=1pt] (An) -> (O1);
        \draw[line width=1pt] (O2) -> (B1);
        \draw[line width=1pt] (O2) -> (Bn);
        \draw[line width=1pt, arrows={ - latex}] (O1) to [out=50, in=130]  node[above]{$w_{1}$} (O2);
        \draw[line width=1pt, arrows={ - latex}] (O1) to [out=-50, in=-130] node[below]{$w_{n}$} (O2);

        \draw[arrows={triangle 90-triangle 90}] (2.5,0) -- (3.5,0);

        \node (A1') at (4,0.5) {};
        \node (Ai') at (4,0) {$\vdots$};
        \node (An') at (4,-0.5) {};
        \node (B1') at (7,0.5) {};
        \node (Bi') at (7,0) {$\vdots$};
        \node (Bn') at (7,-0.5) {};
        \node (O1') at (5,0) {};
        \node (O2') at (6,0) {};

        \fill (A1') circle [radius=1pt];
        \fill (An') circle [radius=1pt];
        \fill (O1') circle [radius=1pt];
        \fill (O2') circle [radius=1pt];
        \fill (B1') circle [radius=1pt];
        \fill (Bn') circle [radius=1pt];

        \draw[line width=1pt] (A1') -> (O1');
        \draw[line width=1pt] (An') -> (O1');
        \draw[line width=1pt, arrows={ - latex}] (O1') to node[above]{$\ds \sum w_{i}$} (O2');
        \draw[line width=1pt] (O2') -> (B1');
        \draw[line width=1pt] (O2') -> (Bn');

      \end{tikzpicture}
    \end{center}

    \item[(4)] (source/sink eliminating)
    \begin{center}
      \begin{tikzpicture}[x=1.4cm,y=1.4cm]

        \node (O) at (-1,0) {};
        \node (A1) at (0,0.5) {};
        \node (Ai) at (0,0.1) {$\vdots$};
        \node (An) at (0,-0.5) {};

        \fill (A1) circle [radius=1pt];
        \fill (An) circle [radius=1pt];
        \fill (O) circle [radius=1pt];

        \draw[line width=1pt, arrows={ - latex}] (O) -> (A1);
        \draw[line width=1pt, arrows={ - latex}] (O) -> (An);

        \draw[arrows={triangle 90-triangle 90}] (0.5,0) -- (1.5,0);

        \node (A1') at (2,0.5) {};
        \node (Ai') at (2,0.1) {$\vdots$};
        \node (An') at (2,-0.5) {};


        \fill (A1') circle [radius=1pt];
        \fill (An') circle [radius=1pt];

      \end{tikzpicture}
    \end{center}

    \item[(5)] (hub vertex resolution)
    \begin{center}
      \begin{tikzpicture}[x=1.4cm,y=1.4cm]
        \node (A0) at (-1,0) {};
        \node (v1) at (-1, 0.2) {$v_{1}$};
        \node (v2) at (1, 0.2) {$v_{2}$};
        \node (A1) at (0,0.5) {};
        \node (Ai1) at (0,0.3) {$\vdots$};
        \node (Ai2) at (0,-0.15){$\vdots$};
        \node (An) at (0,-0.5) {};
        \node (B1) at (2,0.5) {};
        \node (Bi) at (2,0) {$\vdots$};
        \node (Bn) at (2,-0.5) {};
        \node (O) at (1,0) {};
        \node (u) at (-0.5,0) [above] {$u$};
        \node (w1) at (1.35,0.25) [above] {$w_{1}$};
        \node (w1) at (1.35,-0.25) [below] {$w_{n}$};
        \node (t) at (1.35,-1) {};

        \fill (A0) circle [radius=1pt];
        \fill (A1) circle [radius=1pt];
        \fill (An) circle [radius=1pt];
        \fill (O) circle [radius=1pt];
        \fill (B1) circle [radius=1pt];
        \fill (Bn) circle [radius=1pt];

        \draw[line width=1pt, arrows={ - latex}] (A0) -> (O);
        \draw[line width=1pt, arrows={ - latex}] (A1) -> (O);
        \draw[line width=1pt, arrows={ - latex}] (An) -> (O);
        \draw[line width=1pt, arrows={ - latex}] (O) -> (B1);
        \draw[line width=1pt, arrows={ - latex}] (O) -> (Bn);

        \draw[arrows={triangle 90-triangle 90}] (2.5,0) -- (3.5,0);

        \node (A0') at (4,0) {};
        \node (v1') at (4, 0.2) {$v_{1}$};
        \node (v2') at (6, 0.2) {$v_{2}$};
        \node (A1') at (5,0.5) {};
        \node (Ai') at (5,0.1) {$\vdots$};
        \node (An') at (5,-0.5) {};
        \node (B1') at (7,0.5) {};
        \node (Bi') at (7,0.1) {$\vdots$};
        \node (Bn') at (7,-0.5) {};
        \node (O') at (6,0) {};
        \node (w1') at (6.35,0.25) [above] {$w_{1}$};
        \node (w1') at (6.35,-0.25) [below] {$w_{n}$};
        \node (uw1') at (5.8,0.85) [above] {$uw_{1}$};
        \node (uwn') at (5.8,-0.85) [below] {$uw_{n}$};

        \fill (A0') circle [radius=1pt];
        \fill (A1') circle [radius=1pt];
        \fill (An') circle [radius=1pt];
        \fill (O') circle [radius=1pt];
        \fill (B1') circle [radius=1pt];
        \fill (Bn') circle [radius=1pt];

        \draw[line width=1pt, arrows={ - latex}]  (A0') to [out=50, in=150] (B1');
        \draw[line width=1pt, arrows={ - latex}]  (A0') to [out=-50, in=210] (Bn');
        \draw[line width=1pt, arrows={ - latex}] (A1') -> (O');
        \draw[line width=1pt, arrows={ - latex}] (An') -> (O');
        \draw[line width=1pt, arrows={ - latex}] (O') -> (B1');
        \draw[line width=1pt, arrows={ - latex}] (O') -> (Bn');

      \end{tikzpicture}
    \end{center}

    where $v_{1} \neq v_{2}$.
  \end{itemize}

  Two matrix-weighted graphs $(G,w), (G',w')$ are called equivalent, if they are connected by a finite sequence of elementary transformations of weighted graphs, and we denote this equivalence by $(G,w) \sim (G',w')$.\hfill{$\square$}
\end{dfn} 

\begin{exam}
  The following operation corresponds to the composition of morphisms:
    \begin{center}
        \begin{tikzpicture}[x=1.4cm,y=1.4cm]
         \node (A1) at (-1,0.5) {};
         \node (Ai) at (-1,0) {$\vdots$};
         \node (An) at (-1,-0.5) {};
         \node (B1) at (2,0.5) {};
         \node (Bi) at (2,0) {$\vdots$};
         \node (Bn) at (2,-0.5) {};
         \node (O1) at (0,0) {};
         \node (O2) at (0.5,0) {};
         \node (O3) at (1,0) {};
    
         \fill (A1) circle [radius=1pt];
         \fill (An) circle [radius=1pt];
         \fill (O1) circle [radius=1pt];
         \fill (O2) circle [radius=1pt];
         \fill (O3) circle [radius=1pt];
         \fill (B1) circle [radius=1pt];
         \fill (Bn) circle [radius=1pt];

         \draw[line width = 1pt, arrows={ - latex}] (O1) to node[above]{$w_{1}$} (O2);
         \draw[line width = 1pt, arrows={ - latex}] (O2) to node[above]{$w_{2}$} (O3);
         \draw[line width = 1pt] (A1) -> (O1);
         \draw[line width = 1pt] (An) -> (O1);
         \draw[line width = 1pt] (O3) -> (B1);
         \draw[line width = 1pt] (O3) -> (Bn);

         \draw[arrows={triangle 90-triangle 90}] (2.5,0) -- (3.5,0);
    
         \node (A1') at (4,0.5) {};
         \node (Ai') at (4,0) {$\vdots$};
         \node (An') at (4,-0.5) {};
         \node (B1') at (7,0.5) {};
         \node (Bi') at (7,0) {$\vdots$};
         \node (Bn') at (7,-0.5) {};
         \node (O1') at (5,0) {};
         \node (O2') at (6,0) {};
    
         \fill (A1') circle [radius=1pt];
         \fill (An') circle [radius=1pt];
         \fill (O1') circle [radius=1pt];
         \fill (O2') circle [radius=1pt];
         \fill (B1') circle [radius=1pt];
         \fill (Bn') circle [radius=1pt];

         \draw[line width = 1pt, arrows={ - latex}] (O1') to node[above]{$w_{1}w_{2}$} (O2');
         \draw[line width = 1pt] (A1') -> (O1');
         \draw[line width = 1pt] (An') -> (O1');
         \draw[line width = 1pt] (O2') -> (B1');
         \draw[line width = 1pt] (O2') -> (Bn');
    
        \end{tikzpicture}
    \end{center}
    can be obtained as a combination of the following fundamental operations. By setting $n=1$ in (4) and assuming that the number of input and output edges at the central vertex is exactly one each, we obtain:
    \begin{center}
        \begin{tikzpicture}[x=1.4cm,y=1.4cm]
        \node (A) at (-1,0) {};
        \node (B) at (1,0) {};
        \node (O) at (0,0) {};

        \fill (A) circle [radius=1pt];
        \fill (O) circle [radius=1pt];
        \fill (B) circle [radius=1pt];
        
        \draw[line width = 1pt, arrows={ - latex}] (A) to node[above]{$w_{1}$} (O);
        \draw[line width = 1pt, arrows={ - latex}] (O) to node[above]{$w_{2}$} (B);

        \draw[arrows={triangle 90-triangle 90}] (1.5,0) to node [above] {(5)} (2.5,0);
        
        \node (A') at (3,0) {};
        \node (B') at (5,0) {};
        \node (O') at (4,0) {};

        \fill (A') circle [radius=1pt];
        \fill (O') circle [radius=1pt];
        \fill (B') circle [radius=1pt];

        \draw[line width = 1pt, arrows={ - latex}] (O') to node[above]{$w_{2}$} (B');
        \draw[line width = 1pt, arrows={ - latex}] (A') to [out=50, in=130, looseness = 1.3] node[above]{$w_{1}w_{2}$} (B');
        
        \draw[arrows={triangle 90-triangle 90}] (5.5,0) to node [above] {(4)}  (6.5,0);
        
        \node (A'') at (7,0) {};
        \node (B'') at (9,0) {};
        
        \fill (A'') circle [radius=1pt];
        \fill (B'') circle [radius=1pt];
        
        \draw[line width = 1pt, arrows={ - latex}] (A'') to [out=50, in=130, looseness = 1.3] node[above]{$w_{1}w_{2}$} (B'');
        
        \end{tikzpicture}
    \end{center}
    \hfill{$\square$}
\end{exam}

\begin{thm} \label{Thm-TrfmPreserveZeta}
  \[
  (G,w) \sim (G',w') \Longrightarrow \zeta_{G}(w) = \zeta_{G'}(w')
  \]
\end{thm}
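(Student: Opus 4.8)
The plan is to reduce the statement to a single linear-algebra identity. By definition the relation $\sim$ is generated by the five elementary transformations of Definition~\ref{Def-MatWtGraphTrfm} together with the reversal of all edges, so it suffices to show that each of these six operations fixes $\zeta_G(w)$. The tool is a determinant expression for the zeta function: for a matrix-weighted graph $(G,w)$ with vertex set $\{v_1,\dots,v_n\}$, let $\mathbb{A}_w$ be the block matrix indexed by $V(G)$ whose $(v_i,v_j)$ block is the $a_{v_i}\times a_{v_j}$ matrix
\[
  (\mathbb{A}_w)_{v_i v_j}\;=\;\sum_{e\colon\, o(e)=v_i,\; t(e)=v_j} w(e).
\]
Then $\zeta_G(w)=\det(I-\mathbb{A}_w)^{-1}$. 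One may either quote this from the literature on matrix-weighted zeta functions, or prove it from the identity $-\log\det(I-\mathbb{A}_w)=\sum_{k\ge1}\frac1k\operatorname{tr}(\mathbb{A}_w^{\,k})$, expanding $\operatorname{tr}(\mathbb{A}_w^{\,k})$ as a sum of $\operatorname{tr}(w(W))$ over rooted closed walks $W$ of length $k$ and regrouping these walks according to the unique prime cycle class of which each is a rotation of a power, counting rotations with care. To make the logarithms literal, insert a bookkeeping variable $s$, replace each $w(e)$ by $s\,w(e)$, work in $\mathbb{C}[t^{\pm1}][[s]]$, and specialise $s=1$.

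Granting this expression, every elementary transformation becomes an operation on $\mathbb{A}_w$ that visibly preserves $\det(I-\mathbb{A}_w)$. A change of basis (1) at a vertex $O$ conjugates $\mathbb{A}_w$ by the block-diagonal matrix carrying $P$ in the $O$-slot and identities elsewhere. Inserting a $0$-weighted edge (2) and collapsing parallel edges into their sum (3) leave the block sums $(\mathbb{A}_w)_{v_i v_j}$, hence $\mathbb{A}_w$ itself, literally unchanged. Eliminating a source or sink $v$ (4) makes the block column (resp.\ row) of $I-\mathbb{A}_w$ indexed by $v$ equal to $(0,\dots,0,I_{a_v},0,\dots,0)$, so a block-triangular expansion along it gives $\det(I-\mathbb{A}_w)=\det(I-\mathbb{A}_{w'})$ for the graph with $v$ and its incident edges removed. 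Reversing all edges replaces $\mathbb{A}_w$ by its transpose (with the blocks transposed accordingly), which fixes the determinant.

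The step I expect to be the main obstacle is the hub vertex resolution (5). One must recognise that deleting the single edge $v_1\to v_2$ of weight $u$ and adjoining, for every outgoing edge $v_2\to B_j$ of $v_2$ (of weight $w_j$), an edge $v_1\to B_j$ of weight $u w_j$, is exactly the block row operation ``add $u$ times the $v_2$-row to the $v_1$-row'' on $I-\mathbb{A}_w$ --- an operation that leaves $\det(I-\mathbb{A}_w)$ fixed precisely because $v_1\neq v_2$. Verifying this requires some bookkeeping: one checks that the entries produced by the row operation reproduce, block by block, the adjacency matrix of the transformed graph, with the term $u\bigl(I-(\mathbb{A}_w)_{v_2 v_2}\bigr)$ at position $(v_1,v_2)$ cancelling the contribution of the deleted edge, and with the $(v_1,v_2)$- and $(v_1,v_1)$-blocks absorbing the cases $B_j=v_2$ and $B_j=v_1$ (new loops) correctly; possible coincidences or multiplicities among the $B_j$ cause no trouble because $w_1,\dots,w_n$ are \emph{all} the outgoing edges of $v_2$, counted with multiplicity. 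Once (5) is settled, combining the six cases proves the theorem.
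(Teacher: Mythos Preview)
Your proof is correct, and while it reaches the same destination as the paper, it takes a more uniform route. The paper invokes the determinant formula $\zeta_G(w)=\det(I-\mathbb{A}_w)^{-1}$ only for transformation~(3), treating (1), (2), (4) as obvious from the cycle-product definition and handling (5) by a direct combinatorial bijection on prime cycles: the map $(\dots,e,f_i,\dots)\leftrightarrow(\dots,e_i',\dots)$ that contracts each occurrence of the two-step path $e,f_i$ to the single new edge $e_i'$, checking that primality and cycle weights survive. You instead push everything through the determinant formula, disposing of (2) and (3) as literal no-ops on $\mathbb{A}_w$ and recognising (5) as the block row operation $R_{v_1}\mapsto R_{v_1}+u\,R_{v_2}$ on $I-\mathbb{A}_w$, which is determinant-preserving precisely because $v_1\neq v_2$. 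Your approach is tidier and sidesteps the primality bookkeeping that the bijection requires; the paper's cycle bijection for (5), on the other hand, makes the holonomy-preservation visible at the level of individual cycles rather than only globally through the determinant, which fits the paper's broader theme.
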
 

\begin{proof}
    The statements for (1), (2), and (4) in Definition \ref{Def-MatWtGraphTrfm} are obvious, so we discuss (3) and (5) below. For (5), we label the arcs as follows:
    \begin{center}
        \begin{tikzpicture}[x=1.4cm,y=1.4cm]
            \node (A0) at (-1,0) {};
            \node (A1) at (0,0.5) {};
            \node (Ai1) at (0,0.3) {$\vdots$};
            \node (Ai2) at (0,-0.15){$\vdots$};
            \node (An) at (0,-0.5) {};
            \node (B1) at (2,0.5) {};
            \node (Bi) at (2,0) {$\vdots$};
            \node (Bn) at (2,-0.5) {};
            \node (O) at (1,0) {};
            \node (u) at (-0.5,0) [above] {$e$};
            \node (t) at (1.35,-1) {};
            
            \fill (A0) circle [radius=1pt];
            \fill (A1) circle [radius=1pt];
            \fill (An) circle [radius=1pt];
            \fill (O) circle [radius=1pt];
            \fill (B1) circle [radius=1pt];
            \fill (Bn) circle [radius=1pt];

            \draw[line width = 1pt, arrows={ - latex}] (A0) -> (O);
            \draw[line width = 1pt, arrows={ - latex}] (A1) to node[above]{$e_{1}$} (O);
            \draw[line width = 1pt, arrows={ - latex}] (An) to node[below]{$e_{n}$} (O);
            \draw[line width = 1pt, arrows={ - latex}] (O) to node[above]{$f_{1}$} (B1);
            \draw[line width = 1pt, arrows={ - latex}] (O) to node[below]{$f_{m}$} (Bn);

            \draw[arrows={triangle 90-triangle 90}] (2.5,0) -- (3.5,0);
            
            \node (A0') at (4,0) {};
            \node (A1') at (5,0.5) {};
            \node (Ai') at (5,0.1) {$\vdots$};
            \node (An') at (5,-0.5) {};
            \node (B1') at (7,0.5) {};
            \node (Bi') at (7,0.1) {$\vdots$};
            \node (Bn') at (7,-0.5) {};
            \node (O') at (6,0) {};
            \node (uw1') at (5.8,0.85) [above] {$e'_{1}$};
            \node (uwn') at (5.8,-0.85) [below] {$e'_{n}$};
            
            \fill (A0') circle [radius=1pt];
            \fill (A1') circle [radius=1pt];
            \fill (An') circle [radius=1pt];
            \fill (O') circle [radius=1pt];
            \fill (B1') circle [radius=1pt];
            \fill (Bn') circle [radius=1pt];
            
            \draw[line width = 1pt, arrows={ - latex}]  (A0') to [out=50, in=150] (B1');
            \draw[line width = 1pt, arrows={ - latex}]  (A0') to [out=-50, in=210] (Bn');
            \draw[line width = 1pt, arrows={ - latex}] (A1') to node[above]{$e_{1}$} (O');
            \draw[line width = 1pt, arrows={ - latex}] (An') to node[below]{$e_{n}$} (O');
            \draw[line width = 1pt, arrows={ - latex}] (O') to node[above]{$f_{1}$} (B1');
            \draw[line width = 1pt, arrows={ - latex}] (O') to node[below]{$f_{n}$} (Bn');
        
        \end{tikzpicture}
    \end{center}
    Then, the cycle
    \[
    (\cdots, e, f_{i}, \cdots) \longleftrightarrow (\cdots, e_{i}', \cdots)
    \]
    forms a one-to-one correspondence, the cycles remain prime, and the weight of each cycle is preserved. Thus, the matrix-weighted zeta function is preserved. For (3), this follows from Theorem \ref{Thm-IharaBassDetFomula}.
\end{proof}

\begin{thm}[{\cite{Watanabe-Fukumizu}}]\label{Thm-IharaBassDetFomula}
  We define the adjacency matrix $A(G,w)$ as follows. Let $V(G) = \{v_{1}, \ldots, v_{n}\}$ be the set of vertices, and let $A(G,w)$ be a matrix whose components are defined as below:
  \begin{align*}
    A(G,w)_{ij}:=
    \begin{cases}
      0\ \ \ \ &(\text{if there exist no edges from } v_{i} \text{ to } v_{j})\\
      \text{sum of weights from } v_{i} \text{ to } v_{j} &(\text{if there exist edges from } v_{i} \text{ to } v_{j})
    \end{cases}
  \end{align*}
  Then the following holds.
  \begin{align}
  \label{Thm-IharaBass-siki}
    \zeta_{G}(w) = \det (I-A(G,w))^{-1}
  \end{align}
\end{thm}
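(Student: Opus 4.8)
The plan is to prove the identity of Theorem~\ref{Thm-IharaBassDetFomula} by the logarithmic (power-trace) method familiar from the classical Ihara--Bass determinant formula. The key structural observation is that the cycles in Definition~\ref{Def-Of-zeta} carry no non-backtracking or tail-free restriction, so we are in the ``unrestricted'' situation in which the answer is the linear expression $\det(I-A(G,w))^{-1}$ rather than an Ihara-type quadratic one; this is exactly why the trace bookkeeping will close up. Throughout I will treat the identity as one between formal (or rational) functions of the weights; see the last paragraph for how to make this precise.

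First I would read off the meaning of $\mathrm{tr}(A(G,w)^m)$. Regarding $A=A(G,w)$ as a block matrix of size $(\sum_i a_i)\times(\sum_i a_i)$ whose $(i,j)$-block is the sum of the weights of the edges $v_i\to v_j$, block multiplication shows that the $(i,i)$-block of $A^m$ equals $\sum_{P}w(e_1)\cdots w(e_m)$, the sum over closed paths $P=(e_1,\dots,e_m)$ of length $m$ based at $v_i$. Summing the ordinary traces of these diagonal blocks over $i$ gives $\mathrm{tr}(A^m)=\sum_{|P|=m}\mathrm{tr}(w(P))$, the sum over \emph{all} closed paths of length $m$, with $w(P)=w(e_1)\cdots w(e_m)$; by cyclicity of the trace, $\mathrm{tr}(w(P))$ depends only on the equivalence class $[P]$, and for a prime cycle $C$ we have $w(C)^k=w(C^k)$. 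Applying the identity $\log\det(I-M)^{-1}=\sum_{k\ge 1}\tfrac1k\mathrm{tr}(M^k)$ factorwise to the definition of $\zeta_G(w)$, and once to $\det(I-A)^{-1}$, the whole statement reduces to the equality of trace series
\[
\sum_{[C]\ \text{prime}}\ \sum_{k\ge 1}\frac1k\,\mathrm{tr}(w(C)^k)\;=\;\sum_{m\ge 1}\frac1m\sum_{|P|=m}\mathrm{tr}(w(P)).
\]

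The core is then the classical unique factorization of closed walks: every closed path $P$ is uniquely a power $C^{k}$ of a prime cycle $C$ with $k\ge 1$, so every equivalence class $[P]$ with $|P|=m$ corresponds to a unique pair $([C],k)$ with $k\,|C|=m$, and---because $C$ is prime---the class $[P]$ consists of exactly $|C|=m/k$ distinct cyclic rotations. Hence the inner sum on the right equals $\sum_{k\,|C|=m}\tfrac{m}{k}\,\mathrm{tr}(w(C)^k)$; dividing by $m$ and summing over $m$ collapses the right-hand side to the left-hand side, and exponentiating yields $\zeta_G(w)=\det(I-A(G,w))^{-1}$.

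The step I expect to need the most care is not the combinatorics but legitimizing the logarithms, the exponential, and the infinite product. I would handle this by grading: introduce a formal variable $u$, replace each weight $w(e)$ by $u\,w(e)$ (so $A$ becomes $uA$ and $w(C)$ becomes $u^{|C|}w(C)$), and interpret both sides inside $1+u\,\mathbb{C}[t^{\pm 1}][[u]]$. Since $G$ is finite, only finitely many prime cycles have each given length, so $\prod_{[C]}\det(I-u^{|C|}w(C))^{-1}$ is a well-defined formal power series, all the logarithms make sense, $\log\det(I-uA)^{-1}=\sum_{m\ge 1}\tfrac{u^{m}}{m}\mathrm{tr}(A^{m})$ holds identically, and the displayed trace-series identity is an equality of $u$-coefficients, each of which is a finite sum. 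One then recovers Theorem~\ref{Thm-IharaBassDetFomula} by viewing $\det(I-uA(G,w))^{-1}$ as a rational function of $u$ over $\mathbb{C}[t^{\pm 1}]$ and reading the statement in that ring, equivalently as an identity of formal or rational functions in the weights.
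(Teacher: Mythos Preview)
Your proof is correct and follows essentially the same logarithmic trace-expansion argument as the paper: both sides are reduced, via $\log\det(I-M)^{-1}=\sum_{k\ge 1}\tfrac{1}{k}\mathrm{tr}(M^k)$, to a combinatorial identity that is then verified using the unique factorization of closed walks as powers of prime cycles and the count $|[C]|=|C|$ for prime $C$. Your version is somewhat more explicit about the block-matrix interpretation of $\mathrm{tr}(A^m)$ and adds the grading by a formal variable $u$ to make the infinite product and logarithms rigorous, a point the paper leaves implicit, but the underlying argument is the same.
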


\begin{proof}
  Firstly, we calculate the left-hand side of (\ref{Thm-IharaBass-siki}):
  \begin{align*}
    \log \zeta_{G}(w)
    &=\log \prod_{[C]} \det(I - w(C))^{-1}\\
    &=\sum_{[C]} \mathrm{tr}( \log (I - w(C))^{-1})\\
    &=\sum_{[C]} \sum_{k \geq 1} \frac{1}{k}\mathrm{tr} (w(C)^{k})
  \end{align*}
  There are $|C|$ ways to take points of cycle $C$:
  \begin{align*}
    \sum_{[C]} \sum_{k \geq 1} \frac{1}{k}\mathrm{tr} (w(C)^{k})
    &= \sum_{C : \text{prime cycle}} \frac{1}{|C|} \sum_{k \geq 1}\frac{1}{k}\mathrm{tr} (w(C)^{k})\\
    &= \sum_{C : \text{prime cycle}}  \sum_{k \geq 1}\frac{1}{k|C|}\mathrm{tr} (w(C)^{k}).
  \end{align*}
  $w(C)^{k}$ is the weight of a cycle which goes $k$ times around a cycle $C$:
  \[
    \sum_{C : \text{prime cycle}}  \sum_{k \geq 1}\frac{1}{k|C|}\mathrm{tr} (w(C)^{k})
    = \sum_{C : \mathrm{cycles}} \frac{1}{|C|}\mathrm{tr} (w(C)).
  \]
  Since
  \begin{align*}
    \sum_{C : \mathrm{cycles}} \frac{1}{|C|}\mathrm{tr} (w(C))
    = \sum_{k\geq 1} \frac{1}{k}\mathrm{tr} ((A(G,w))^{k}),
  \end{align*}
  (\ref{Thm-IharaBass-siki}) is proved.
\end{proof}

\section{Group-weighted graphs and their transformations}
In this section, we construct weighted graphs from finitely presented groups satisfying some conditions by using free differential and define their transformation.

\subsection{Free differential and base points on relations}
Let $F_{n}$ be a free group generated by $x_{1}, \ldots, x_{n}$, that is, $F_{n} = \braket{x_{1}, \ldots, x_{n}}$. The {\it free differential} is a homomorphism
\[
  \frac{\partial}{\partial x_{i}} : \mathbb{Z}F_{n} \to \mathbb{Z}F_{n}
\]
satisfying the following properties:
\begin{itemize}
  \item for all $p,q \in F_{n}$
    \[
      \frac{\partial}{\partial x_{i}}(pq) = \frac{\partial}{\partial x_{i}}(p) + p\frac{\partial}{\partial x_{i}}(q).
    \]
  \item $\ds \frac{\partial}{\partial x_{i}} (x_{j}) = \delta_{ij}$.
\end{itemize}

\begin{dfn}
  Let $\Braket{X|R} = \Braket{x_{1}, \ldots, x_{n} | r_{1}, \ldots, r_{m}}$ be a presentation of a group $G$. Within a relation $r$, we choose and fix one $x_{i}$ as the {\it base point of $r$}. \hfill{$\square$}
\end{dfn}

We make the following assumption.
\begin{ass} \label{Ass-BasePt}
  Each $r_{i}$ has a base point and the alphabets (generators) used in $X$ are pairwise distinct. Furthermore, let $\mathrm{pr} : F_{s} \twoheadrightarrow G$ be a projection, then
  \begin{align}
    \mathrm{pr} \left( \frac{\partial r_{i}}{\partial x_i}\right) \neq 0 \in \mathbb{Z}G \label{Ass-siki}
  \end{align}
  is satisfied.\hfill{$\square$}
\end{ass}

\begin{rem}
    Geometrically, Assumption~\ref{Ass-BasePt} is analogous to the implicit function theorem, in that it ensures the existence of a ``smooth'' coordinate structure in the non-commutative group ring $\mathbb{Z}G$ around the chosen base points. This condition suggests the presence of meaningful invariants, which we investigate via group-weighted graphs. Notably, many knot groups—having deficiency one (i.e., the number of generators exceeds that of relations by one)—satisfy this assumption, making them a natural fit for our approach based on group-weighted graphs.\hfill{$\square$}
\end{rem}

If a presentation $\Braket{X|R}$ includes a base point for each relation $r \in R$, it is denoted as  $(X,R,B)$. Generally, there are multiple ways to select base points, but when choosing a pair of base points, we write the relation as
\[
  r_{i} = x_{i}f_{i}(x_{1}, \ldots , x_{n})^{-1}
\]
by appropriately labeling relations. We further rewrite this as
\[
  r_{i} : x_{i} = f_{i}
\]
in the following discussion. According to (\ref{Ass-siki}), 
\[
  \mathrm{pr}\left( \frac{\partial f_{i}}{\partial x_{i}}\right) \neq 1 \in \mathbb{Z}G.
\]
If \( f_i^{-1} \) involves \( x_i \), then there is an ambiguity in choosing which occurrence of \( x_i \) in \( r_i \) should be treated as the base point. For the purpose of construction, we fix one such choice. Later, we show that the resulting zeta function is independent of this choice up to units in \( \mathbb{C}[t^{\pm1}] \).

\begin{rem} \label{Rem-assump-inj}
  Assumption \ref{Ass-BasePt} means that there exists an injection
  \[
  f : R \to X,
  \]
  and for each $r\in R$, $f(r)$ is an alphabet of $r$ and the specific occurrence of $f(r)$ in $r$ is determined.\hfill{$\square$}
\end{rem}

\subsection{Construction of group-weighted graphs and their transformations}
\begin{dfn}\label{Def-GrpWtGraph}
  For a finitely presented group $\Braket{x_{1}, \ldots, x_{n} |r_{1}, \ldots, r_{m} }$ satisfying Assumption \ref{Ass-BasePt}, we regard $r_{i} = x_{i}f_{i}^{-1}$ as $x_{i} = f_{i}$ and take a free differential of both sides:
  \begin{align}
  \label{Def-GrpWtGraph-siki}
    dx_{i} = df_{i} = \sum_{j=1}^{n} \frac{\partial f_{i}}{\partial x_{j}}dx_{j}.
  \end{align}
  We construct a graph with $n$ vertices $v_{1}, \ldots, v_{n}$. For each vertex, we make the edges from $v_{i}$ to $v_{j}$ using a relation $r_{i}$ and define its weight as
  \[
    \mathrm{pr}\left( \frac{\partial f_{i}}{\partial x_{j}}\right),
  \]
  
  where $\mathrm{pr} : \Braket{x_{1}, \ldots, x_{n}} \twoheadrightarrow \Braket{x_{1}, \ldots, x_{n} |r_{1}, \ldots, r_{m} }$. We call it the {\it group-weighted graph}, which is denoted by $\Gamma(X,R,B)$. \hfill{$\square$}
\end{dfn}

\begin{rem}
    When calculating the weights on group-weighted graphs, what we are interested in is not $\partial f_{i}/\partial x_{j}$ but rather in $\mathrm{pr}(\partial f_{i}/\partial x_{j})$. To simplify the notation, we write it as
    \[
        dx_{i} = \sum_{j}\mathrm{pr}\left( \frac{\partial f_{i}}{\partial x_{j}}\right)dx_{j}.
    \]
    For example, if the relation $r_{i}$ is given by $x_{i} = x_{j}x_{i+1}x_{j}^{-1}$, then the corresponding differential equation for constructing a group-weighted graph is $dx_{i} = x_{j}dx_{i+1} + (1-x_{i})dx_{j}$.\hfill{$\square$}
\end{rem}

We define the elementary transformation of group-weighted graphs.

\begin{dfn}\label{Def-GrpWtGraphTrfm}
  For a group-weighted graph, we consider the following local transformations that modify parts of a group-weighted graph, as described below. We call their transformations the {\it elementary transformation of a group-weighted graph}. In the following figures, edges without arrows can be directed in both directions.
  
  \begin{itemize}

    \item[(G1)] (null edge)

    \begin{center}
      \begin{tikzpicture}[x=1.4cm,y=1.4cm]
        \node (A) at (0,0) {};
        \node (B) at (1,0) {};

        \fill (A) circle [radius=1pt];
        \fill (B) circle [radius=1pt];


        \draw[arrows={triangle 90-triangle 90}] (1.5,0) -- (2.5,0);

        \node (A') at (3,0) {};
        \node (B') at (4,0) {};

        \fill (A') circle [radius=1pt];
        \fill (B') circle [radius=1pt];

        \draw[line width=1pt, arrows={ - latex}] (A') to node[above]{$0$} (B');

      \end{tikzpicture}
    \end{center}
    where the origin of the edge with weight $0$ is not a sink.

    \item[(G2)] (summing weights)
    \begin{center}
      \begin{tikzpicture}[x=1.4cm,y=1.4cm]
        \node (A1) at (-1,0.5) {};
        \node (Ai) at (-1,0) {$\vdots$};
        \node (An) at (-1,-0.5) {};
        \node (B1) at (2,0.5) {};
        \node (Bi) at (2,0) {$\vdots$};
        \node (Bn) at (2,-0.5) {};
        \node (O1) at (0,0) {};
        \node (O2) at (1,0) {};
        \node (wi) at (0.5,0.05) {$\vdots$};

        \fill (A1) circle [radius=1pt];
        \fill (An) circle [radius=1pt];
        \fill (O1) circle [radius=1pt];
        \fill (O2) circle [radius=1pt];
        \fill (B1) circle [radius=1pt];
        \fill (Bn) circle [radius=1pt];

        \draw[line width=1pt] (A1) -> (O1);
        \draw[line width=1pt] (An) -> (O1);
        \draw[line width=1pt] (O2) -> (B1);
        \draw[line width=1pt] (O2) -> (Bn);
        \draw[line width=1pt, arrows={ - latex}] (O1) to [out=50, in=130]  node[above]{$w_{1}$} (O2);
        \draw[line width=1pt, arrows={ - latex}] (O1) to [out=-50, in=-130] node[below]{$w_{n}$} (O2);

        \draw[arrows={triangle 90-triangle 90}] (2.5,0) -- (3.5,0);

        \node (A1') at (4,0.5) {};
        \node (Ai') at (4,0) {$\vdots$};
        \node (An') at (4,-0.5) {};
        \node (B1') at (7,0.5) {};
        \node (Bi') at (7,0) {$\vdots$};
        \node (Bn') at (7,-0.5) {};
        \node (O1') at (5,0) {};
        \node (O2') at (6,0) {};

        \fill (A1') circle [radius=1pt];
        \fill (An') circle [radius=1pt];
        \fill (O1') circle [radius=1pt];
        \fill (O2') circle [radius=1pt];
        \fill (B1') circle [radius=1pt];
        \fill (Bn') circle [radius=1pt];

        \draw[line width=1pt] (A1') -> (O1');
        \draw[line width=1pt] (An') -> (O1');
        \draw[line width=1pt, arrows={ - latex}] (O1') to node[above]{$\ds \sum w_{i}$} (O2');
        \draw[line width=1pt] (O2') -> (B1');
        \draw[line width=1pt] (O2') -> (Bn');

      \end{tikzpicture}
    \end{center}

    \item[(G3)] (source eliminating)
    \begin{center}
      \begin{tikzpicture}[x=1.4cm,y=1.4cm]

        \node (O) at (-1,0) {};
        \node (A1) at (0,0.5) {};
        \node (Ai) at (0,0.1) {$\vdots$};
        \node (An) at (0,-0.5) {};

        \fill (A1) circle [radius=1pt];
        \fill (An) circle [radius=1pt];
        \fill (O) circle [radius=1pt];
        \draw (A1) node [right] {$v_{i_{1}}$};
        \draw (An) node [right] {$v_{i_{m}}$};
        \draw (O) node [left] {$y$};

        \draw[line width=1pt, arrows={ - latex}] (O) to node[above]{$w_{i_{1}}$} (A1);
        \draw[line width=1pt, arrows={ - latex}] (O) to node[below]{$w_{i_{m}}$} (An);

        \draw[arrows={triangle 90-triangle 90}] (0.5,0) -- (1.5,0);

        \node (A1') at (2,0.5) {};
        \node (Ai') at (2,0.1) {$\vdots$};
        \node (An') at (2,-0.5) {};


        \fill (A1') circle [radius=1pt];
        \fill (An') circle [radius=1pt];

      \end{tikzpicture}

    \end{center}
    Assuming that for $w_{i_{1}}, \ldots, w_{i_{m}}$ shown in the figure, there exists a word $f$ satisfying 
    \[
      df = \sum_{j} w_{i_{j}}dx_{i_{j}}.
    \]

    \item[(G4)] (hub vertex resolution)
    \begin{center}
      \begin{tikzpicture}[x=1.4cm,y=1.4cm]
        \node (A0) at (-1,0) {};
        \node (v1) at (-1, 0.2) {$v_{1}$};
        \node (v2) at (1, 0.2) {$v_{2}$};
        \node (A1) at (0,0.5) {};
        \node (Ai1) at (0,0.3) {$\vdots$};
        \node (Ai2) at (0,-0.15){$\vdots$};
        \node (An) at (0,-0.5) {};
        \node (B1) at (2,0.5) {};
        \node (Bi) at (2,0) {$\vdots$};
        \node (Bn) at (2,-0.5) {};
        \node (O) at (1,0) {};
        \node (u) at (-0.5,0) [above] {$u$};
        \node (w1) at (1.35,0.25) [above] {$w_{1}$};
        \node (w1) at (1.35,-0.25) [below] {$w_{n}$};
        \node (t) at (1.35,-1) {};

        \fill (A0) circle [radius=1pt];
        \fill (A1) circle [radius=1pt];
        \fill (An) circle [radius=1pt];
        \fill (O) circle [radius=1pt];
        \fill (B1) circle [radius=1pt];
        \fill (Bn) circle [radius=1pt];

        \draw[line width=1pt, arrows={ - latex}] (A0) -> (O);
        \draw[line width=1pt, arrows={ - latex}] (A1) -> (O);
        \draw[line width=1pt, arrows={ - latex}] (An) -> (O);
        \draw[line width=1pt, arrows={ - latex}] (O) -> (B1);
        \draw[line width=1pt, arrows={ - latex}] (O) -> (Bn);

        \draw[arrows={triangle 90-triangle 90}] (2.5,0) -- (3.5,0);

        \node (A0') at (4,0) {};
        \node (v1') at (4, 0.2) {$v_{1}$};
        \node (v2') at (6, 0.2) {$v_{2}$};
        \node (A1') at (5,0.5) {};
        \node (Ai') at (5,0.1) {$\vdots$};
        \node (An') at (5,-0.5) {};
        \node (B1') at (7,0.5) {};
        \node (Bi') at (7,0.1) {$\vdots$};
        \node (Bn') at (7,-0.5) {};
        \node (O') at (6,0) {};
        \node (w1') at (6.35,0.25) [above] {$w_{1}$};
        \node (w1') at (6.35,-0.25) [below] {$w_{n}$};
        \node (uw1') at (5.8,0.85) [above] {$uw_{1}$};
        \node (uwn') at (5.8,-0.85) [below] {$uw_{n}$};

        \fill (A0') circle [radius=1pt];
        \fill (A1') circle [radius=1pt];
        \fill (An') circle [radius=1pt];
        \fill (O') circle [radius=1pt];
        \fill (B1') circle [radius=1pt];
        \fill (Bn') circle [radius=1pt];

        \draw[line width=1pt, arrows={ - latex}]  (A0') to [out=50, in=150] (B1');
        \draw[line width=1pt, arrows={ - latex}]  (A0') to [out=-50, in=210] (Bn');
        \draw[line width=1pt, arrows={ - latex}] (A1') -> (O');
        \draw[line width=1pt, arrows={ - latex}] (An') -> (O');
        \draw[line width=1pt, arrows={ - latex}] (O') -> (B1');
        \draw[line width=1pt, arrows={ - latex}] (O') -> (Bn');

      \end{tikzpicture}
    \end{center}
    
    where $v_{1} \neq v_{2}$.

  \end{itemize}
  \hfill{$\square$}
\end{dfn}

If there exists a representation $\rho : G \to GL_{k}(\mathbb{C}[t^{\pm 1}])$ and replace each weight $g \in \mathbb{Z}G$ by $\rho (g)$, we can construct a matrix-weighted graph from a group-weighted graph, which is denoted by $\Gamma_{\rho}(X,R,B)$ .

\begin{prop} \label{Prop-GrpEquivIsWtEquiv}
    It two group-weighted graphs $\Gamma(X,R,B)$ and $\Gamma(X', R', B')$ are connected with a finite sequence of a group-weighted graph, $\Gamma_{\rho}(X,R,B)$ and $\Gamma_{\rho}(X',R',B')$ are matrix-weighted graph equivalent.\hfill{$\square$}
\end{prop}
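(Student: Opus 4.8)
The plan is to show that each elementary transformation of a group-weighted graph (G1)--(G4) in Definition~\ref{Def-GrpWtGraphTrfm} is carried, under the substitution $g \mapsto \rho(g)$, to a finite sequence of elementary transformations of matrix-weighted graphs from Definition~\ref{Def-MatWtGraphTrfm}; since $\rho$ is a ring homomorphism $\mathbb{Z}G \to \mathrm{Mat}_{k}(\mathbb{C}[t^{\pm1}])$ it commutes with sums and products of weights, so the local picture of each transformation is preserved after applying $\rho$. One then concatenates these sequences along the given finite sequence connecting $\Gamma(X,R,B)$ to $\Gamma(X',R',B')$. So the proof is a case analysis, one case per transformation type.

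First I would treat the easy cases. For (G2) (summing weights): $\rho(\sum_i w_i) = \sum_i \rho(w_i)$, so (G2) maps exactly onto the matrix-graph transformation (3). For (G4) (hub vertex resolution): $\rho(uw_i) = \rho(u)\rho(w_i)$, so (G4) maps exactly onto the matrix-graph transformation (5); note the condition $v_1 \neq v_2$ is identical on both sides. For (G1) (null edge): introducing an edge of weight $0 \in \mathbb{Z}G$ becomes introducing an edge of weight $\rho(0) = 0$, i.e.\ the zero matrix, which is transformation (2); the side condition "origin is not a sink" is harmless (it is a restriction of the matrix-graph move, which has no such condition). The one case needing a little more care is (G3) (source eliminating). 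In Definition~\ref{Def-MatWtGraphTrfm} the matrix-graph move (4) eliminates a \emph{bare} source (a vertex with only outgoing edges and \emph{no} weights attached to the statement beyond the edges themselves), so to match (G3) -- which eliminates a source $y$ together with the edges $w_{i_1},\dots,w_{i_m}$ leaving it, under the hypothesis that some word $f$ satisfies $df = \sum_j w_{i_j}\,dx_{i_j}$ -- I would first observe that applying $\rho$ turns the weights into $\rho(w_{i_1}),\dots,\rho(w_{i_m})$ with no further constraint needed on the matrix side (the existence-of-$f$ hypothesis is what makes (G3) a \emph{legal} move on group-weighted graphs, but it plays no role once we are in the matrix world), and then invoke transformation (4) to delete $y$ and its outgoing edges. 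Thus (G3) maps to (4).

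Assembling: let $\Gamma(X,R,B) = \Gamma_0 \to \Gamma_1 \to \cdots \to \Gamma_N = \Gamma(X',R',B')$ be the given sequence of elementary transformations of group-weighted graphs. Applying $\rho$ vertex-by-vertex and edge-by-edge commutes with each transformation by the above, so $\Gamma_\rho(X,R,B) = (\Gamma_0)_\rho \to (\Gamma_1)_\rho \to \cdots \to (\Gamma_N)_\rho = \Gamma_\rho(X',R',B')$ where each arrow is now a finite sequence of matrix-weighted elementary transformations; concatenating gives a single finite sequence, so $\Gamma_\rho(X,R,B) \sim \Gamma_\rho(X',R',B')$.

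The main obstacle I anticipate is purely bookkeeping rather than conceptual: making sure the \emph{side conditions} on the group-weighted moves (the non-sink hypothesis in (G1), the existence of $f$ in (G3)) really do map to situations where the corresponding matrix-weighted moves are applicable, and conversely that nothing in the matrix-weighted moves is being used that the group-weighted setup failed to guarantee. A secondary point to state carefully is that the matrix-weighted move (4) as drawn eliminates a source with \emph{exactly} the outgoing edges shown, so when (G3) eliminates a source $y$ that might also have had incoming edges one must first check that Assumption~\ref{Ass-BasePt} and the construction in Definition~\ref{Def-GrpWtGraph} force such a $y$ to be a genuine source (no incoming edges) in $\Gamma(X,R,B)$ -- otherwise a preliminary reduction would be needed. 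I would address this with a short remark rather than a separate lemma.
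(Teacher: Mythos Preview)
Your proposal is correct and follows exactly the approach the paper has in mind: the paper's own proof is the single line ``It is easy to see by Definition~\ref{Def-MatWtGraphTrfm} and Definition~\ref{Def-GrpWtGraphTrfm},'' and your case-by-case matching of (G1)--(G4) with (2)--(5) is precisely the content of that comparison, spelled out explicitly. Your worries about side conditions are harmless (they are restrictions on the group-weighted side, not requirements on the matrix side), so the argument goes through as you wrote it.
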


\begin{proof}
  It is easy to see by Definition \ref{Def-MatWtGraphTrfm} and Definition \ref{Def-GrpWtGraphTrfm}.
\end{proof}

\begin{prop} \label{Prop-ElemProperty}
  Let $\zeta_{\rho}(X,R,B)$ be a matrix-weighted zeta function of $\Gamma_{\rho}(X,R,B)$, then the followings are satisfied.
  \begin{itemize}
    \item[\textup{(1)}] $\rho \sim \rho' \Longrightarrow \zeta_{\rho}(X,R,B) = \zeta_{\rho'}(X,R,B)$
    \item[\textup{(2)}] $\rho \sim \rho_{1} \oplus \rho_{2} \Longrightarrow \zeta_{\rho}(X,R,B) = \zeta_{\rho_{1}}(X,R,B)\zeta_{\rho_{2}}(X,R,B)$\hfill{$\square$}
  \end{itemize}
\end{prop}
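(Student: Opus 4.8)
The plan is to reduce both assertions to the Ihara--Bass-type determinant formula of Theorem~\ref{Thm-IharaBassDetFomula}. First I would record the structure of the adjacency matrix of $\Gamma_{\rho}(X,R,B)$. By the construction of $\Gamma_{\rho}$ given just before the statement, each weight $g\in\mathbb{Z}G$ of the group-weighted graph $\Gamma(X,R,B)$ is replaced by $\rho(g)$, where $\rho\colon G\to GL_{k}(\mathbb{C}[t^{\pm1}])$ is extended $\mathbb{Z}$-linearly to $\mathbb{Z}G$. Hence, writing $n=|X|$, the matrix $A_{\rho}:=A(\Gamma_{\rho}(X,R,B))$ of Theorem~\ref{Thm-IharaBassDetFomula} is the $nk\times nk$ block matrix whose $(i,j)$ block equals $\rho(D_{ij})$, where $D_{ij}\in\mathbb{Z}G$ is the sum of the weights of the edges from $v_{i}$ to $v_{j}$ in $\Gamma(X,R,B)$; the point is that $D_{ij}$ does not depend on $\rho$. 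Theorem~\ref{Thm-IharaBassDetFomula} then gives $\zeta_{\rho}(X,R,B)=\det(I-A_{\rho})^{-1}$, so both claims become statements about this one determinant.

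For (1): if $\rho\sim\rho'$, say $\rho'(g)=P\rho(g)P^{-1}$ for a fixed $P\in GL_{k}(\mathbb{C}[t^{\pm1}])$, then the same conjugation identity persists after $\mathbb{Z}$-linear extension to $\mathbb{Z}G$, so $A_{\rho'}=\widetilde{P}\,A_{\rho}\,\widetilde{P}^{-1}$ with $\widetilde{P}=\operatorname{diag}(P,\dots,P)$ ($n$ copies). Therefore $\det(I-A_{\rho'})=\det(I-A_{\rho})$ and the zeta functions coincide. I would also point out the graph-theoretic rendering: $\Gamma_{\rho'}(X,R,B)$ is obtained from $\Gamma_{\rho}(X,R,B)$ by performing the change-of-basis transformation (1) of Definition~\ref{Def-MatWtGraphTrfm} with matrix $P$ simultaneously at every vertex, so that Theorem~\ref{Thm-TrfmPreserveZeta} applies directly.

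For (2): by (1) I may assume $\rho=\rho_{1}\oplus\rho_{2}$ on the nose, i.e.\ $\rho(g)$ is block-diagonal with blocks $\rho_{1}(g),\rho_{2}(g)$; this block form is preserved by $\mathbb{Z}$-linear extension, so $\rho(D_{ij})=\rho_{1}(D_{ij})\oplus\rho_{2}(D_{ij})$. Conjugating $A_{\rho}$ by the permutation matrix that first lists all the $\rho_{1}$-coordinates over all vertices and then all the $\rho_{2}$-coordinates turns $A_{\rho}$ into $A_{\rho_{1}}\oplus A_{\rho_{2}}$, whence $\det(I-A_{\rho})=\det(I-A_{\rho_{1}})\det(I-A_{\rho_{2}})$ and $\zeta_{\rho}(X,R,B)=\zeta_{\rho_{1}}(X,R,B)\,\zeta_{\rho_{2}}(X,R,B)$. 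Alternatively one can bypass Theorem~\ref{Thm-IharaBassDetFomula} entirely: in the Euler product of Definition~\ref{Def-Of-zeta}, each $w(C)$ is a product of block-diagonal matrices, hence block-diagonal, so $\det(I-w(C))$ factors as the product of the two analogous determinants, and the product over $[C]$ factors accordingly.

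I do not expect a genuine obstacle here; the work is purely bookkeeping. The two points needing care are: (i) justifying the $\rho$-independent block description of the adjacency matrix, which is immediate from the definition of $\Gamma_{\rho}(X,R,B)$ and of $A(G,w)$ in Theorem~\ref{Thm-IharaBassDetFomula}; and (ii) in the transformation-based version of (1), a vertex carrying a loop is handled more cleanly by the determinant formula than by the pictured change-of-basis move. I would therefore present the determinant-formula argument as the main line and mention the transformation viewpoint only as a remark.
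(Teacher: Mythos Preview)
Your argument is correct. The paper, however, takes the shorter route that you relegate to a side remark: it works directly with the Euler product of Definition~\ref{Def-Of-zeta}. For (1), since the weight along any cycle satisfies $\rho'(w(C))=P^{-1}\rho(w(C))P$, one has $\det(I-\rho'(w(C)))=\det(I-\rho(w(C)))$ factor by factor; for (2), after reducing via (1) to $\rho=\rho_{1}\oplus\rho_{2}$, each $w(C)$ is block-diagonal and $\det(I-w(C))$ factors, which is exactly your ``alternatively'' paragraph. Your main line via Theorem~\ref{Thm-IharaBassDetFomula} and the block-diagonal adjacency matrix is equally valid and has the mild advantage that it sidesteps any convergence or well-definedness worries about the infinite product, at the cost of invoking a heavier theorem and the permutation-conjugation bookkeeping; the paper's cycle-by-cycle argument is lighter and needs nothing beyond the definition.
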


\begin{proof}
  \begin{itemize}
    \item[(1)] When $\rho \sim \rho'$, there exists a regular matrix $P$ such that $P^{-1}\rho(g)P = \rho'(g)$ for all $g \in G$, and then $P^{-1}\rho(w)P = \rho'(w)$ for $w \in \mathbb{Z}G$. Since 
    \[
      \det(I - w(C)) = \det (P^{-1}IP - P^{-1}w(C)P),
    \]
    $\zeta_{\rho}(X,R,B) = \zeta_{\rho'}(X,R,B)$ is satisfied.
    
    \item[(2)] By (1), we can assume $\rho = \rho_{1} \oplus \rho_{2}$. Since
    \[\det \left(I -
    \left[
      \begin{array}{c|c}
        \rho_{1}(g) & 0\\ \hline
        0 & \rho_{2}(g)
      \end{array}
      \right]
      \right)
      = \det(I - \rho_{1}(g)) \det(I - \rho_{2}(g)), 
    \]
    $\zeta_{\rho}(X,R,B) = \zeta_{\rho_{1}}(X,R,B)\zeta_{\rho_{2}}(X,R,B)$ is satisfied. 
  \end{itemize}
\end{proof}

As we mentioned above, we may have some choices which $x_{i}$ in $r_{i}$ to choose when constructing a matrix-weighted graph, but the following holds.

\begin{prop}\label{Prop-Base-Choice}
  A matrix-weighted zeta function is unique up to units in $\mathbb{C}[t^{\pm 1}]$ regardless of the labeling of $r_{i}$ or the choices of $x_{i}$ in $r_{i}$.
\end{prop}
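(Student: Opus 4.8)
The plan is to show that the two potential sources of ambiguity---which relation $r_i$ is paired with which generator $x_i$ (i.e.\ the labeling), and which occurrence of $x_i$ inside $r_i$ is designated the base point when $f_i^{-1}$ also involves $x_i$---each only change the constructed matrix-weighted graph by elementary transformations, and then invoke Theorem~\ref{Thm-TrfmPreserveZeta}, up to units in $\mathbb{C}[t^{\pm1}]$. So the statement reduces to two independent claims: (i) changing the choice of occurrence of $x_i$ in $r_i$ multiplies $\zeta$ by a unit, and (ii) changing the injection $f : R \to X$ given by Assumption~\ref{Ass-BasePt} (the labeling) changes the graph by elementary transformations.

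For (i): if $r_i = x_i f_i^{-1}$ and $f_i^{-1}$ contains another occurrence of $x_i$, then writing $r_i$ as $x_i = f_i$ versus choosing the other occurrence amounts to a cyclic rewriting of the same relator, say $r_i = u\, x_i\, v$ with $r_i$ conjugate to $x_i\, v\, u = x_i\, f_i^{-1}$ in two ways. I would compute both free derivatives $\partial f_i / \partial x_j$ under the two choices and compare. Using the product rule $\frac{\partial}{\partial x_i}(pq) = \frac{\partial p}{\partial x_i} + p \frac{\partial q}{\partial x_i}$ and the fact that $\mathrm{pr}(r_i) = 1$ in $G$, one finds the two weight vectors $\big(\mathrm{pr}(\partial f_i/\partial x_j)\big)_j$ differ from each other by left multiplication by $\mathrm{pr}(\partial f_i/\partial x_i) - 1$ (a unit in $\mathbb{C}[t^{\pm1}]$ after applying $\rho$, by the condition $\mathrm{pr}(\partial f_i/\partial x_i) \neq 1$ in Assumption~\ref{Ass-BasePt}; more precisely, one must check the relevant determinant factor is a unit). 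Since rescaling all of vertex $v_i$'s outgoing weights by a fixed invertible element is exactly a change of basis (transformation (1) of Definition~\ref{Def-MatWtGraphTrfm}) at $v_i$, this changes $\zeta$ only by a unit.

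For (ii): suppose Assumption~\ref{Ass-BasePt} holds for two different injections $f, f' : R \to X$. Since all generators in $X$ are pairwise distinct and $f(r)$ must be a generator occurring in $r$, swapping the labeling corresponds to re-solving the system $\{r_i : x_{f(r_i)} = (\cdots)\}$ for a different distinguished variable. I would argue this is realized by a sequence of hub-vertex resolutions, summing-weights moves, and source/sink eliminations: solving relation $r$ for a different variable $x_{f'(r)}$ occurring in it is the graph-theoretic analogue of substitution, which the elementary transformations of Definition~\ref{Def-MatWtGraphTrfm} are designed to implement (cf.\ the composition example). Then Theorem~\ref{Thm-TrfmPreserveZeta} gives equality of the zeta functions on the nose.

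The main obstacle I expect is part (i): carefully bookkeeping the free-differential identities when the relator contains several occurrences of $x_i$, and verifying that the discrepancy factor is genuinely a \emph{unit} in $\mathbb{C}[t^{\pm1}]$ rather than merely nonzero---this is where Assumption~\ref{Ass-BasePt}, and possibly a degree/evaluation argument in $t$, must be used. Part (ii) is more combinatorial but should follow the same pattern as the composition-of-morphisms example already given, so I anticipate it is routine once the right sequence of moves is identified.
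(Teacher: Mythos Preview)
There is a genuine gap in part (i). You claim the two weight vectors differ by left multiplication by $\mathrm{pr}(\partial f_i/\partial x_i)-1$, and that Assumption~\ref{Ass-BasePt} makes this a unit after applying $\rho$. Neither holds. Assumption~\ref{Ass-BasePt} says only that $\mathrm{pr}(\partial r_i/\partial x_i)\neq 0$, equivalently $\mathrm{pr}(\partial f_i/\partial x_i)\neq 1$; this is a nonvanishing condition on an element of $\mathbb{Z}G$, not a unit condition, and no degree/evaluation argument will promote it to one. More fundamentally, the two choices do \emph{not} differ by a uniform rescaling of the outgoing weights (the $i$-th row of $A$). What actually happens---and what the paper computes explicitly by writing the relator as $x=w_1xw_2$ versus $x=w_1^{-1}xw_2^{-1}$ (or the analogous case with $x^{-1}$)---is that the $i$-th row of $I-A(G,w)$, not of $A(G,w)$, gets multiplied by $-\mathrm{pr}(w_1)$, where $w_1$ is a \emph{group element} (the subword between the two occurrences of $x_i$). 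Since $\rho$ sends group elements into $GL_m(\mathbb{C}[t^{\pm1}])$, $\det\rho(w_1)$ is automatically a unit. This is why the paper works via the determinant formula (Theorem~\ref{Thm-IharaBassDetFomula}) rather than via elementary graph moves: a change-of-basis at $v_i$ conjugates a self-loop weight ($w\mapsto PwP^{-1}$) rather than scaling it, so it cannot realize a row-scaling of $I-A$, and your proposed mechanism would in any case preserve $\zeta$ exactly, not up to a unit.

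For part (ii) you have read ``labeling of $r_i$'' as changing which generator each relation is solved for, and you propose to realize this by hub-vertex resolutions and substitutions. In the paper ``labeling'' simply means the indexing of the relations; changing it permutes the rows of $I-A(G,w)$ and contributes a sign to the determinant, again via Theorem~\ref{Thm-IharaBassDetFomula}. The question of changing the injection $f:R\to X$ itself is only addressed in the subsequent Corollary, and only under the extra hypothesis $f(R)=g(R)$.
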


\begin{proof} 
    According to Theorem \ref{Thm-IharaBassDetFomula}, the right-hand side of
    \[
    dx_{i}
    = d f_{i}
    = \sum_{j} \frac{\partial f_{i}}{\partial x_{j}} dx_{j}
    \]
    as given in (\ref{Def-GrpWtGraph-siki}), corresponds to the adjacency matrix $A(G,w)$. $\zeta_{\rho}(X,R,B)$ is determined up to $\pm 1$ regardless of the labeling of $r_{i}$, as labeling $r_{i}$ corresponds to row operations on $I - A(G,w)$. 
    
    Now, we examine the effect of choosing $x_{i}$ as the left-hand side of $r_{i} : x_{i} = f_{i}$. It is sufficient to consider the following two cases. We will simply write $x$ instead of $x_{i}$.
    \begin{itemize}
      \item[(a)] $x = w_{1}xw_{2} \longleftrightarrow x = w_{1}^{-1} x w_{2}^{-1}$
      \item[(b)] $x = w_{1}x^{-1}w_{2} \longleftrightarrow x = w_{2} x^{-1} w_{1}$
    \end{itemize}
    
    For the case (a), the free differential of the right-hand sides of equations are
    \begin{align*}
      d(w_{1}xw_{2})
      &= dw_{1} + w_{1}dx + w_{1}xdw_{2}, \\
      d(w_{1}^{-1}xw_{2}^{-1})
      &= -w_{1}^{-1}dw_{1} + w_{1}^{-1}dx - w_{1}^{-1}xw_{2}^{-1}dw_{2}.
    \end{align*}

    We express $w_1$ and $w_2$ as $w_{1} = A,dx + A'$ and $w_{2} = B,dx + B'$, where $A$ and $B$ are independent of $dx$. Then, the differentials can be written as follows:
    \begin{align*}
      d(w_{1}xw_{2})
      &= (A+w_{1}+w_{1}xB)dx + (A'+w_{1}xB'), \\
      d(w_{1}^{-1}xw_{2}^{-1})
      &= (-w_{1}^{-1}A+w_{1}^{-1}-w_{1}^{-1}xw_{2}^{-1}B)dx + (-w_{1}^{-1}A'-w_{1}^{-1}xw_{2}^{-1}B').
    \end{align*}
    Since
    \begin{align*}
        I - (A+w_{1}+w_{1}xB) 
        &= w_{1} \{w_{1}^{-1} - (w_{1}^{-1}A + I + xB)\}\\
        &= -w_{1} \{I - (-w_{1}^{-1}A+w_{1}^{-1}-xB)\},\\
        A'+w_{1}xB'
        &= -w_{1}(-w_{1}^{-1}A'-xB')
    \end{align*}
    and because graphs are constructed with $x=w_{1}^{-1}xw_{2}^{-1}$, the claim follows. The case (b) follows similarly.
\end{proof}

That is, if an injection $f : R \to X$ is given and each alphabet $f(r)$ in $r$ is different from each other, a zeta function induced by $f$ is determined up to units in $\mathbb{C}[t^{\pm 1}]$. If the set of base points is preserved, the zeta function is also preserved:

\begin{cor}
  Let $\Braket{X | R}$ be a presentation of a group $G$, two maps $f, g : X \to R$ be injections and each $f(r)$ and each $g(r)$ be an alphabet in $r$. If $f(R) = g(R)$, zeta functions induced by $f, g$ are equal up to units in $\mathbb{C}[t^{\pm 1}]$. 
\end{cor}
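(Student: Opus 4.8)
The plan is to route the comparison through the Ihara--Bass-type determinant formula (Theorem~\ref{Thm-IharaBassDetFomula}) and to recognize the matrix appearing there as a fixed submatrix of the Fox Jacobian $\big(\mathrm{pr}(\partial r/\partial x_k)\big)_{r\in R,\ x_k\in X}$, which is visibly independent of any choice of base points. I use the convention that $f,g\colon R\to X$ are base-point assignments (so $f(r)$, $g(r)$ are alphabets occurring in $r$) and set $S:=f(R)=g(R)\subseteq X$, $|S|=m=|R|$. By the discussion preceding the corollary, the zeta function induced by $f$ (resp.\ $g$) is already well defined up to units in $\mathbb{C}[t^{\pm 1}]$, so I fix once and for all a labeling of $R$ and a choice of occurrence of each base point, and compute with those; Theorem~\ref{Thm-IharaBassDetFomula} then gives $\zeta_f^{-1}=\det\big(I-A(\Gamma_\rho(X,R,B_f))\big)$, and similarly for $g$.

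Next I shrink the matrix. A vertex $v_j$ with $x_j\notin S$ has no outgoing edge, since no relation has $x_j$ as base point, so the $j$-th row of the adjacency matrix vanishes; Laplace expansion along these rows (equivalently, repeated use of the source-elimination move (4) of Definition~\ref{Def-MatWtGraphTrfm}) yields $\zeta_f^{-1}=\det(I_m-\tilde A_f)$, where $\tilde A_f$ is the $S\times S$ submatrix of $A$ and its row indexed by $x_i\in S$ is built from the unique relation $r$ with $f(r)=x_i$. Writing that relation as $r=x_if_i^{-1}$, a one-line free-differential computation together with $\mathrm{pr}(r)=1$ gives
\[
\mathrm{pr}\!\left(\frac{\partial f_i}{\partial x_k}\right)=\delta_{ik}-\mathrm{pr}\!\left(\frac{\partial r}{\partial x_k}\right),
\qquad\text{whence}\qquad
(I_m-\tilde A_f)_{ik}=\rho\!\left(\mathrm{pr}\!\left(\frac{\partial r}{\partial x_k}\right)\right)
\]
for $x_i,x_k\in S$. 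Thus $I_m-\tilde A_f$ is $\rho$ applied to the $m\times m$ submatrix of the Fox Jacobian obtained by keeping exactly the columns indexed by $S$, its rows permuted via $f^{-1}\colon S\to R$; choosing a different occurrence of a base point only multiplies the corresponding row by a group element $\mathrm{pr}(u)$ and so changes the determinant by a unit, consistent with the normalization above.

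To finish, I observe that the Fox Jacobian depends only on $(X,R)$, not on base points, so since $g(R)=f(R)=S$ the matrices $I_m-\tilde A_f$ and $I_m-\tilde A_g$ are $\rho$ applied to the very same submatrix of it, merely with rows ordered by $f$ versus by $g$; these orderings differ by the permutation $x_i\mapsto f(g^{-1}(x_i))$ of $S$, so $\det(I_m-\tilde A_g)=\pm\det(I_m-\tilde A_f)$, giving $\zeta_g^{-1}=\pm\,\zeta_f^{-1}$ and hence equality up to units. The delicate part is the middle step: the free-differential bookkeeping, the justification for discarding the columns outside $S$, and the occurrence-invariance claim all need care. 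It is worth recording why a cruder argument fails: writing $g=f\circ\sigma$ and trying to factor the permutation $\sigma$ of $R$ into transpositions does not help, because a single transposition of two relations need not yield a legal base-point assignment (a swapped generator may not occur in the other relation) --- passing through the base-point-free Fox Jacobian is exactly what sidesteps this.
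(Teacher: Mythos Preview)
Your argument is correct and is precisely the mechanism the paper relies on: the paper states this corollary without proof, treating it as an immediate consequence of Proposition~\ref{Prop-Base-Choice} and the sentence preceding the corollary, and your Fox--Jacobian identification $(I_m-\tilde A_f)_{ik}=\rho\big(\mathrm{pr}(\partial r/\partial x_k)\big)$ is exactly what makes the paper's terse claim ``labeling $r_i$ corresponds to row operations on $I-A(G,w)$'' rigorous. In particular, your observation that the row attached to a relation $r$ is, up to left multiplication by a unit in $\rho(G)$, the intrinsic Fox row of $r$ (independent of which occurrence or which generator is singled out) is the point the paper takes for granted; once that is in hand, two assignments with the same image $S$ yield the same multiset of rows in different orders, hence the same determinant up to units.
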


\subsection{Transformations of group presentations and that of group-weighted graphs}
Now, we discuss the relationship between the transformation of group-weighted graphs and the transformation of finitely presented groups satisfying Assumption \ref{Ass-BasePt}. As is well known, following transformations of group presentations induce the isomorphism of two groups.

\begin{dfn}[{\cite{M-Wada}}] \label{Def-StronglyTietzeTrfm}
  Two finitely presented groups $\Braket{X|R}, \Braket{X'|R'}$ are connected by a finite sequence of operations of (1), (2), (3), (4), and their inverse operations, two presentations are called {\it strongly Tietze equivalent}:
  \begin{itemize}
    \item[(1)] To replace one of the relations, $r_{i}$, by its inverse $r_{i}^{-1}$.
    \item[(2)] To replace one of the relations, $r_{i}$, by its conjugate $wr_{i}w^{-1}\ (w \in F_{n})$.
    \item[(3)] To replace one of the relations, $r_{i}$, by its inverse $r_{i}r_{k}\ (i \neq k)$.
    \item[(4)] To add a new generator $x$ and a new relation $xw^{-1}$, where $w$ is any word in $x_{1}, \ldots, x_{n}$. Thus, the resulting presentation is
    \[
        \Braket{x_{1}, \ldots, x_{n}, x | r_{1}, \ldots, r_{m}, xw^{-1}}\ \ (w \in F_{n}).
    \]
    \hfill{$\square$}
  \end{itemize}
\end{dfn} 

\begin{dfn}
  Let $(X,R,B)$ and $(X',R',B')$ be two presentations of a group $G$, and assume that $(X',R',B')$ is obtained from $(X,R,B)$ by performing operations (1) $\sim$ (4) as described in Definition \ref{Def-StronglyTietzeTrfm}. We set the base point of $(X',R',B')$ as
  \begin{itemize}
    \item The base points are unchanged under (1), (2) and (3),
    \item Under (4), let $x$ be the base point of $xw^{-1}$.
  \end{itemize}
  If two presentations of a group $G$ is connected by a finite sequence of two transformations presented above or their inverse, we say that two presentations are {\it strongly Tietze equivalent preserving base point}.\hfill{$\square$}
\end{dfn}

Then, the following theorem holds.
\begin{thm}\label{Thm-StronglyTietzeEquiv-is-Group-Weighted-Equiv}
    Let $(X,R,B), (X',R',B')$ be two presentations of a group $G$ that are strongly Tietze equivalent preserving base points. Then, $\Gamma(X,R,B) \sim \Gamma(X',R',B')$. 
\end{thm}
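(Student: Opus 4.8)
The plan is to verify, transformation by transformation, that each of the four operations (1)--(4) of Definition~\ref{Def-StronglyTietzeTrfm} (together with the prescribed choice of base points) induces an equivalence of the associated group-weighted graphs via a finite sequence of the elementary transformations (G1)--(G4). Since strong Tietze equivalence preserving base points is by definition generated by these four moves and their inverses, and since $\sim$ on group-weighted graphs is an equivalence relation, establishing this for each move (and noting that the inverse of an elementary transformation is again realized by elementary transformations, or by reading the figures backwards) will complete the proof. Throughout, I will use the formula $dx_i = \sum_j \operatorname{pr}(\partial f_i/\partial x_j)\,dx_j$ from Definition~\ref{Def-GrpWtGraph} to read off weights, and I will use the Leibniz rule for the free differential to compute how the weights change.

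For move~(1), replacing $r_i$ by $r_i^{-1}$ amounts (after relabeling base points, which is harmless by Proposition~\ref{Prop-Base-Choice}) to passing from $x_i = f_i$ to $x_i = f_i'$ where the two are related as in case (a) or (b) of the proof of Proposition~\ref{Prop-Base-Choice}; the computation there already shows the incidence data at $v_i$ changes only by left multiplication by a unit $\pm w$, which is exactly the "hub vertex resolution''/change-of-basis content available among the elementary moves, so $\Gamma$ is unchanged up to $\sim$. For move~(2), conjugating $r_i$ by $w \in F_n$ replaces $f_i$ by $w f_i w^{-1}$; expanding $d(wf_iw^{-1}) = dw + w\,df_i - wf_iw^{-1}\,dw$ and using that $wf_iw^{-1} = x_i$ in $G$, one sees the new outgoing weights from $v_i$ are $w\cdot(\text{old weight}) + (1 - x_i)\partial w/\partial x_j$-type corrections; I would show these corrections are absorbed by a (G4) hub resolution applied along the cycle that $w$ traces out, reducing the new graph to the old one — this is the step that requires the most care, since one must track how the "$dw$'' terms distribute over all vertices appearing in $w$ and check that the net effect on every prime cycle's holonomy is trivial. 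For move~(3), replacing $r_i$ by $r_ir_k$ with $i \neq k$: here $f_i$ becomes $f_i$ composed with $r_k$, and the Leibniz rule gives new weights that are the old weights from $v_i$ plus $\operatorname{pr}(\partial f_i/\partial x_i)$ times the weights coming from $r_k$; this is precisely the configuration handled by (G2) (summing weights) after introducing an auxiliary vertex, so the reduction is direct. For move~(4), adding a generator $x$ and relation $xw^{-1}$ with base point $x$: this adds a new vertex $v_x$ with $dx = dw = \sum_j \operatorname{pr}(\partial w/\partial x_j)\,dx_j$, i.e.\ a new source-like vertex whose outgoing edges carry exactly the weights $\partial w/\partial x_j$ satisfying $d w = \sum_j w_j\,dx_j$ — this is the hypothesis of (G3) (source eliminating), so $v_x$ can be deleted, returning $\Gamma(X',R',B')$ to $\Gamma(X,R,B)$; one must also check Assumption~\ref{Ass-BasePt} is preserved, which is immediate since $\operatorname{pr}(\partial(xw^{-1})/\partial x) = 1 \neq 0$ forces nothing new and the new generator does not occur in the old relations.

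The main obstacle I anticipate is move~(2): conjugation is the only move that genuinely spreads a correction term across many vertices of the graph rather than touching a single local picture, so the bookkeeping needed to exhibit the new graph as elementarily equivalent to the old one — likely by an induction on the word length of $w$, reducing to the case $w = x_j^{\pm 1}$ and iterating a single (G4) move — is where the argument has real content. The other three moves each reduce essentially to a single elementary transformation (G1)/(G2)/(G3) applied in an evident place, with only routine Leibniz-rule verification of the weights. A secondary, minor point to handle carefully is that the base-point bookkeeping under moves (1) and (3) must be reconciled with the freedom allowed by Proposition~\ref{Prop-Base-Choice} so that the statement about $\Gamma$ (not merely its zeta function) being preserved up to $\sim$ is literally correct; if a residual ambiguity in base-point choice remains, it can be absorbed into a change-of-basis move, which is among the elementary transformations.
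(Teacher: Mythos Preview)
Your diagnosis of where the difficulty lies is inverted, and this leads to a genuine gap in your treatment of move~(3).

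For moves (1) and (2) the graph is \emph{literally unchanged}. With the base-point occurrence of $x_i$ fixed, one rewrites $r_i = x_i f_i^{-1}$ as the equation $x_i = f_i$; both $r_i^{-1} = f_i x_i^{-1}$ and $w r_i w^{-1} = w x_i f_i^{-1} w^{-1}$, after cyclically rotating so that the base-point $x_i$ comes first, again read $x_i = f_i$. Your claim that conjugation ``replaces $f_i$ by $w f_i w^{-1}$'' is the error: the conjugating word cancels at the level of the equation, so there are no correction terms to absorb and no induction on $|w|$ is needed. (Note also that Proposition~\ref{Prop-Base-Choice} concerns the zeta function up to units, not the relation $\sim$ on group-weighted graphs, and ``change of basis'' is \emph{not} among the group-weighted moves (G1)--(G4), so neither can be invoked to patch residual base-point issues.)

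The nontrivial move is (3). After normalizing via (1)--(2), it becomes $x_i f_i^{-1} \mapsto x_i f_i^{-1} w r_j^{\pm 1} w^{-1}$, and one computes $dx_i = df_i \mp w(dx_j - df_j)$. Writing $df_i = \sum_l u_l\,dx_l$ and $df_j = \sum_l w_l\,dx_l$, the outgoing weights from $v_i$ change to $u_l \pm w w_l$ for $l \neq j$ and to $u_j \pm w w_j \mp w$ for $l = j$. This is not ``precisely (G2) after introducing an auxiliary vertex'': no auxiliary vertex appears, and (G2) alone cannot create the new edges $v_i \to v_l$. The actual mechanism is a three-step sequence (G2)--(G4)--(G2): split the edge $v_i \to v_j$ as $(u_j \mp w) + (\pm w)$, apply hub-vertex resolution (G4) along the $\pm w$ edge (which routes it through the outgoing edges of $v_j$, producing the $\pm w w_l$ terms), then recombine parallel edges by (G2). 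Your treatment of (3) as ``direct'' misses exactly this, which is the only place in the proof with real content.
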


\begin{proof}
    The graph is unchanged under (1) or (2) because we fix the base points, and (4) corresponds to source elimination or inverse operation. The nontrivial case arises when operation (3) is composed with other operations.
    \[
    \begin{aligned}
        (A) \quad r_{i} &\longleftrightarrow wr_{i}w^{-1}w'r_{j}w'^{-1} \\
        (B) \quad r_{i} &\longleftrightarrow wr_{i}w^{-1}w'r_{j}^{-1}w'^{-1}
    \end{aligned}
    \]
    We assume $r_{i} = x_{i}f_{i}^{-1}$, $r_{j} = x_{j}f_{j}^{-1}$, and since the base point of right-hand side is $x_{i}$,
    \[
    \begin{aligned}
        &(A) \quad wr_{i}w^{-1}w'r_{j}w'^{-1} \longrightarrow r_{i}w^{-1}w'r_{j}w'^{-1}w = x_{i}f_{i}^{-1}(w^{-1}w')r_{j}(w^{-1}w')^{-1}\\
        &(B) \quad wr_{i}w^{-1}w'r_{j}^{-1}w'^{-1} \longrightarrow r_{i}w^{-1}w'r_{j}^{-1}w'^{-1}w = x_{i}f_{i}^{-1}(w^{-1}w')r_{j}^{-1}(w^{-1}w')^{-1}.
    \end{aligned}
    \]
    Rewriting \( w^{-1}w' \) as \( w \), we obtain the following transformation:
    \[
    \begin{aligned}
        (A) \quad x_{i}f_{i}^{-1} &\longleftrightarrow x_{i}f_{i}^{-1}wr_{j}w^{-1}\\
        (B) \quad x_{i}f_{i}^{-1} &\longleftrightarrow x_{i}f_{i}^{-1}w r_{j}^{-1}w^{-1}.
    \end{aligned}
    \]
    As (A),
    \begin{align*}
        d(x_{i} f_{i}^{-1} w r_{j} w^{-1})
        &= dx_{i} - x_{i} f_{i}^{-1}df_{i} + x_{i} f_{i}^{-1} dw + x_{i} f_{i}^{-1} w dr_{j} - x_{i} f_{i}^{-1} w r_{j} w^{-1} dw\\
        &= dx_{i} - df_{i} + dw + w dr_{j} - dw\\
        &= dx_{i} - df_{i} + wdr_{j}.
    \end{align*}
    As (B), since $dr_{j}^{-1} = -r_{j}^{-1}dr_{j}$, and we can assume $r_{j}=1$ when constructing a group-weighted graph, we write
    \[
        d(x_{i} f_{i}^{-1} w r_{j}^{-1} w^{-1}) = dx_{i} - df_{i} - wdr_{j}.
    \]
    In addition, since $r_{j} = x_{j}f_{j}^{-1}$,
    \[
    \begin{aligned}
        &(A) \quad dx_{i}=df_{i} \longleftrightarrow dx_{i} = df_{i} - w(dx_{j} - df_{j})\\
        &(B) \quad dx_{i}=df_{i} \longleftrightarrow dx_{i} = df_{i} + w(dx_{j} - df_{j}).
    \end{aligned}
    \]
    Now, we write
    \begin{align*}
    df_{i} = \sum_{l} u_{l} dx_{l}, df_{j} = \sum_{l} w_{l} dx_{l}.
    \end{align*}

    Let us consider the case (A) first, the corresponding graph transformation is given below where $l\neq j$:
    \begin{center}
        \begin{minipage}{0.48\textwidth}
            \centering
            \textbf{Before Transformation}  
            \begin{tikzpicture}[x=0.92cm,y=0.92cm]
              \node (vi) at (-4,0) {};
              \node (vj) at (0,0) {};
              \node (vl) at (3,0) {};
              \node (weight) at (0,1.2) {$w_{j}$};
    
              \fill (vi) circle [radius=1pt];
              \fill (vj) circle [radius=1pt];
              \fill (vl) circle [radius=1pt];
    
              \draw (vi) node [left] {$v_{i}$};
              \draw (vj) node [below] {$v_{j}$};
              \draw (vl) node [right] {$v_{l}$};
    
              \draw[line width = 1pt, arrows={ - latex}] (vi) to node[above]{$u_{j}$} (vj);
              \draw[line width = 1pt, arrows={ - latex}] (vi) to [out=360-15, in=180+15] node[below]{$u_{l}$} (vl);
              \draw[line width = 1pt, arrows={ - latex}] (vj) to node[above]{$w_{l}$} (vl);
              \path[line width = 1pt, arrows={ - latex}] (vj) edge [out=90-30, in=90+30, distance=12mm] (vj);
    
            \end{tikzpicture}
        \end{minipage}
        \begin{minipage}{0.48\textwidth}
            \centering
            \textbf{After Transformation}  
            \begin{tikzpicture}[x=0.92cm,y=0.92cm]
              \node (vi) at (-4,0) {};
              \node (vj) at (0,0) {};
              \node (vl) at (3,0) {};
              \node (weight) at (0,1.2) {$w_{j}$};
    
              \fill (vi) circle [radius=1pt];
              \fill (vj) circle [radius=1pt];
              \fill (vl) circle [radius=1pt];
    
              \draw (vi) node [left] {$v_{i}$};
              \draw (vj) node [below] {$v_{j}$};
              \draw (vl) node [right] {$v_{l}$};
    
              \draw[line width = 1pt, arrows={ - latex}] (vi) to node[above]{$u_{j}+ww_{j}-w$} (vj);
              \draw[line width = 1pt, arrows={ - latex}] (vi) to [out=360-15, in=180+15] node[below]{$u_{l} + ww_{l}$} (vl);
              \draw[line width = 1pt, arrows={ - latex}] (vj) to node[above]{$w_{l}$} (vl);
              \path[line width = 1pt, arrows={ - latex}] (vj) edge [out=90-30, in=90+30, distance=12mm] (vj);
    
            \end{tikzpicture}
        \end{minipage}
    \end{center}

    These two need to be connected by a sequence of elementary transformations. We start with the left diagram. The graph is equivalent to the following figure by weight summation:
    \begin{center}
        \begin{tikzpicture}[x=1.4cm,y=1.4cm]
            \node (vi) at (-4,0) {};
            \node (vj) at (0,0) {};
            \node (vl) at (4,0) {};
            \node (weight) at (0,0.8) {$w_{j}$};
            
            
            \fill (vi) circle [radius=1pt];
            \fill (vj) circle [radius=1pt];
            \fill (vl) circle [radius=1pt];
            
            \draw (vi) node [left] {$v_{i}$};
            \draw (vj) node [below] {$v_{j}$};
            \draw (vl) node [right] {$v_{l}$};
            
            \draw[line width = 1pt, arrows={ - latex}] (vi) to node[above]{$u_{j}-w$} (vj);
            \draw[line width = 1pt, arrows={ - latex}] (vi) to [out=30, in=150] node[above]{$w$} (vj);
            \draw[line width = 1pt, arrows={ - latex}] (vi) to [out=360-10, in=180+10] node[below]{$u_{l}$} (vl);
            \draw[line width = 1pt, arrows={ - latex}] (vj) to node[above]{$w_{l}$} (vl);
            \path[line width = 1pt, arrows={ - latex}] (vj) edge [out=90-30, in=90+30, distance=12mm] (vj);
        \end{tikzpicture}
    \end{center}
    
    Of the edges from $v_{i}$ to $v_{j}$, use the hub vertex resolution for an edge with weight $w$:
    \begin{center}
        \begin{tikzpicture}[x=1.4cm,y=1.4cm]
          \node (vi) at (-4,0) {};
          \node (vj) at (0,0) {};
          \node (vl) at (4,0) {};
          \node (weight) at (0,0.8) {$w_{j}$};
        
        
          \fill (vi) circle [radius=1pt];
          \fill (vj) circle [radius=1pt];
          \fill (vl) circle [radius=1pt];
        
          \draw (vi) node [left] {$v_{i}$};
          \draw (vj) node [below] {$v_{j}$};
          \draw (vl) node [right] {$v_{l}$};
        
          \draw[line width = 1pt, arrows={ - latex}] (vi) to node[above]{$u_{j}-w$} (vj);
          \draw[line width = 1pt, arrows={ - latex}] (vi) to [out=0+25, in=180-25] node[above]{$ww_{j}$} (vj);
          \draw[line width = 1pt, arrows={ - latex}] (vi) to [out=90-55, in=90+55] node[above]{$ww_{l}$} (vl);
          \draw[line width = 1pt, arrows={ - latex}] (vi) to [out=360-10, in=180+10] node[below]{$u_{l}$} (vl);
          \draw[line width = 1pt, arrows={ - latex}] (vj) to node[above]{$w_{l}$} (vl);
          \path[line width = 1pt, arrows={ - latex}] (vj) edge [out=90-30, in=90+30, distance=12mm] (vj);
        
        \end{tikzpicture}
    \end{center}
    After applying weight summation, we obtain the following graph:
    \begin{center}
        \begin{tikzpicture}[x=1.4cm,y=1.4cm]
          \node (vi) at (-4,0) {};
          \node (vj) at (0,0) {};
          \node (vl) at (4,0) {};
          \node (weight) at (0,0.8) {$w_{j}$};
        
        
          \fill (vi) circle [radius=1pt];
          \fill (vj) circle [radius=1pt];
          \fill (vl) circle [radius=1pt];
        
          \draw (vi) node [left] {$v_{i}$};
          \draw (vj) node [below] {$v_{j}$};
          \draw (vl) node [right] {$v_{l}$};
        
          \draw[line width = 1pt, arrows={ - latex}] (vi) to node[above]{$u_{j}+ww_{j}-w$} (vj);
          \draw[line width = 1pt, arrows={ - latex}] (vi) to [out=360-10, in=180+10] node[below]{$u_{l} + ww_{l}$} (vl);
          \draw[line width = 1pt, arrows={ - latex}] (vj) to node[above]{$w_{l}$} (vl);
          \path[line width = 1pt, arrows={ - latex}] (vj) edge [out=90-30, in=90+30, distance=12mm] (vj);
        
        \end{tikzpicture}
    \end{center}
    
    For the case (B), use weight summation $u_{j} = (u_{j}+w) + (-w)$ instead of $u_{j} = (u_{j}-w) + w$.
\end{proof}

Now, we can construct a sequence of transformations of group-weighted graphs corresponding to those of finitely presented groups. Is the ``inverse'' correct? In other words, can we construct a sequence of transformations of finitely presented groups corresponding to that of group-weighted graphs? The affirmative answer to this question is proved by using Theorem \ref{Thm-StronglyTietzeEquiv-is-Group-Weighted-Equiv}.

\begin{cor} \label{Thm-Group-Weighted-Equiv-And-Strong-Tietze}
  Let $(X,R,B)$ be a finitely presented group, and $\Gamma(X,R,B)$ be a group-weighted graph made by $(X,R,B)$. We construct a $\Gamma$ which is equivalent to $\Gamma(X,R,B)$ through a finite sequence of elementary transformations. Then there exists a finitely presented group that satisfies the following properties:
  \begin{itemize}
    \item $\Gamma = \Gamma(X', R', B')$.
    \item $(X,R)$ and $(X',R')$ are strongly Tietze equivalent.\hfill{$\square$}
  \end{itemize}
\end{cor}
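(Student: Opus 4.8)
The plan is to set up a dictionary between the elementary transformations of group-weighted graphs and strong Tietze transformations, and then to induct on the length $N$ of the given sequence $\Gamma(X,R,B)=\Gamma_{0}\to\Gamma_{1}\to\cdots\to\Gamma_{N}=\Gamma$. The base case $N=0$ is immediate. For the inductive step I would carry the invariant that the \emph{normalization} of $\Gamma_{i}$ — obtained by using (G1), (G2) and their inverses to merge parallel edges and delete superfluous weight-$0$ edges — equals $\Gamma(X^{(i)},R^{(i)},B^{(i)})$ for a presentation that is strongly Tietze equivalent to $(X,R)$, with base points chosen as in the definition preceding Theorem~\ref{Thm-StronglyTietzeEquiv-is-Group-Weighted-Equiv}. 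Then, when the sequence terminates at a graph $\Gamma$ already in the form $\Gamma(\cdot)$, the accumulated strong Tietze moves — composed using transitivity — exhibit the required $(X',R',B')$.

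The dictionary is read off one transformation at a time. Moves (G1) and (G2) (and their inverses) only regroup edges without changing the total weight $\sum_{e}w(e)$ between any ordered pair of vertices — a null edge adds $0$, a summation combines a group of parallel edges into their sum — so they fix the normalized graph and require no change of presentation. Move (G3) deletes a source vertex $y$ together with its outgoing fan; its side condition is exactly that some word $f$ satisfies $df=\sum_{j}w_{i_{j}}\,dx_{i_{j}}$, i.e.\ that $y$ carries a relation $x_{y}=f$ with $x_{y}$ absent from the right-hand side, so (G3) is strong Tietze move~(4) of Definition~\ref{Def-StronglyTietzeTrfm} read backwards and its inverse is move~(4) itself. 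For (G4), applied to an edge $v_{1}\xrightarrow{u}v_{2}$: after normalization $v_{2}$ carries an outgoing fan only if it is the base point of some relation $r_{j}\colon x_{2}=f_{j}$, and trading $v_{1}\xrightarrow{u}v_{2}$ for the edges $v_{1}\xrightarrow{uw_{i}}B_{i}$ is precisely the effect on free differentials of substituting $r_{j}$ into $r_{i}$. Expanding $u=\sum_{k}n_{k}g_{k}\in\Z G$, such a substitution is realized by a finite sequence of conjugations (move~(2)) and multiplications (move~(3)) — this is the composite move (A)/(B) whose graph realization (weight summation, hub vertex resolution, weight summation) is carried out inside the proof of Theorem~\ref{Thm-StronglyTietzeEquiv-is-Group-Weighted-Equiv}; so for (G4) one simply runs that computation in reverse.

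Granting the dictionary, the induction is formal: each step changes the attached presentation by an explicit (possibly empty) strong Tietze transformation, the normalized graph after the step is by construction the standard-form graph of the new presentation, and chaining the transformations — while appealing to Theorem~\ref{Thm-StronglyTietzeEquiv-is-Group-Weighted-Equiv} itself to keep the successive graph identifications honest — yields $\Gamma=\Gamma(X',R',B')$ with $(X,R)$ strongly Tietze equivalent to $(X',R')$.

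The main obstacle is twofold. First, an elementary graph move may be applied at an arbitrary location, and one must argue that, on a normalized standard-form graph, it can only occur at a configuration matching a Tietze-move realization: for instance in (G4) the hub vertex must already be a base-point vertex, which is why the induction genuinely needs the normalization step and the claim that (G1), (G2) suffice to restore standard form. Second, and more delicate, one must check that every intermediate presentation still satisfies Assumption~\ref{Ass-BasePt} — distinct relations keeping distinct base points, and $\mathrm{pr}(\partial f_{i}/\partial x_{i})\neq1$ persisting — so that $\Gamma(\cdot)$ remains defined throughout; this amounts to tracking how these nondegeneracy conditions behave under substitution and under generator addition, in the spirit of the computations in the proofs of Proposition~\ref{Prop-Base-Choice} and Theorem~\ref{Thm-StronglyTietzeEquiv-is-Group-Weighted-Equiv}.
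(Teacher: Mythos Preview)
Your proposal is essentially correct and follows exactly the route the paper indicates. The paper's ``proof'' of this corollary consists of a single sentence preceding the statement: ``The affirmative answer to this question is proved by using Theorem~\ref{Thm-StronglyTietzeEquiv-is-Group-Weighted-Equiv}''; no further argument is given. Your plan --- build a dictionary assigning to each elementary graph move (G1)--(G4) a (possibly empty) sequence of strong Tietze moves, induct on the length of the transformation sequence, and invoke the computations inside the proof of Theorem~\ref{Thm-StronglyTietzeEquiv-is-Group-Weighted-Equiv} to handle (G4) --- is precisely the intended reading of that sentence, carried out in detail.

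One comment: the two obstacles you flag at the end (that an arbitrary (G4) must be applied at a base-point vertex for the dictionary to make sense, and that Assumption~\ref{Ass-BasePt} must persist along the sequence) are genuine, and the paper does not address them. Your normalization device is the right way to handle the first. For the second, note that the side conditions built into (G1) and (G3) (the origin of a null edge is not a sink; a source can only be eliminated when its fan integrates to $df$ for some word $f$) are there exactly so that the resulting graph remains the differential picture of an honest presentation with base points --- so the verification reduces to checking that (G4), when applied at a base-point hub, does not destroy the injection $R\hookrightarrow X$ or the nonvanishing condition~(\ref{Ass-siki}). That check is straightforward once you decompose $u=\sum_k n_k g_k$ and realize (G4) as the iterated Tietze move $r_i\mapsto r_i\cdot(g_k r_j^{\pm1} g_k^{-1})$ exhibited in the proof of Theorem~\ref{Thm-StronglyTietzeEquiv-is-Group-Weighted-Equiv}.
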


\section{Application to knot theory}
\subsection{Invariance of Twisted Alexander polynomial from the view of holonomy preserving}
Let $K$ be an oriented knot and fix a diagram $D_{K}$ of the knot $K$. It is well known that two knots \( K \) and \( K' \) are equivalent if and only if their diagrams \( D_K \) and \( D_{K'} \) are connected by a finite sequence of the following Reidemeister moves (see, e.g.,~\cite{Otsuki-Quantum}).

\begin{center}
  \tikzset{every picture/.style={line width=0.75pt}} 

  \begin{tikzpicture}[x=0.5pt,y=0.5pt,yscale=-1,xscale=1]

    \draw    (152,50) .. controls (152,58.78) and (158.65,84.66) .. (171.04,104.49) ;
    \draw [shift={(172,106)}, rotate = 236.98] [color={rgb, 255:red, 0; green, 0; blue, 0 }  ][line width=0.75]    (10.93,-3.29) .. controls (6.95,-1.4) and (3.31,-0.3) .. (0,0) .. controls (3.31,0.3) and (6.95,1.4) .. (10.93,3.29)   ;
    \draw    (182,127) .. controls (184.59,135.98) and (194.26,150.96) .. (204,147) .. controls (213.74,143.04) and (228,131) .. (230,121) .. controls (232,111) and (214.86,87.84) .. (198,93) .. controls (181.23,98.13) and (169.53,131.64) .. (150.29,201.94) ;
    \draw [shift={(150,203)}, rotate = 285.29] [color={rgb, 255:red, 0; green, 0; blue, 0 }  ][line width=0.75]    (10.93,-3.29) .. controls (6.95,-1.4) and (3.31,-0.3) .. (0,0) .. controls (3.31,0.3) and (6.95,1.4) .. (10.93,3.29)   ;
    \draw    (377,51) -- (377,198) ;
    \draw [shift={(377,200)}, rotate = 270] [color={rgb, 255:red, 0; green, 0; blue, 0 }  ][line width=0.75]    (10.93,-3.29) .. controls (6.95,-1.4) and (3.31,-0.3) .. (0,0) .. controls (3.31,0.3) and (6.95,1.4) .. (10.93,3.29)   ;
    \draw    (543,51) .. controls (583.65,208.6) and (631.49,133.04) .. (624,103) .. controls (616.69,73.71) and (590.69,96.58) .. (576.1,113.7) ;
    \draw [shift={(575,115)}, rotate = 309.81] [color={rgb, 255:red, 0; green, 0; blue, 0 }  ][line width=0.75]    (10.93,-3.29) .. controls (6.95,-1.4) and (3.31,-0.3) .. (0,0) .. controls (3.31,0.3) and (6.95,1.4) .. (10.93,3.29)   ;
    \draw    (563,125) .. controls (551.18,136.82) and (545.18,165.13) .. (544.05,198.47) ;
    \draw [shift={(544,200)}, rotate = 271.68] [color={rgb, 255:red, 0; green, 0; blue, 0 }  ][line width=0.75]    (10.93,-3.29) .. controls (6.95,-1.4) and (3.31,-0.3) .. (0,0) .. controls (3.31,0.3) and (6.95,1.4) .. (10.93,3.29)   ;
    \draw[arrows={triangle 90-triangle 90}, line width=0.75pt] (254,121) -- (343,121) 
    node[above, midway] {R1-1};

    \draw[arrows={triangle 90-triangle 90}, line width=0.75pt] (418,121) -- (507,121)
    node[above, midway] {R1-2};

    \node at (152,35) {\(x_i\)};
    \node at (150,220) {\(x_{i+1}\)};

    \node at (380,35) {\(x_{i}\)};

    \node at (543,35) {\(x_i\)};
    \node at (544,220) {\(x_{i+1}\)};

    \draw (60,114.4) node [anchor=north west][inner sep=0.75pt]  [font=\Large]  {$\mathrm{R} 1\ :\ $};

  \end{tikzpicture}
\end{center}

\begin{center}
  \tikzset{every picture/.style={line width=0.75pt}} 

  \begin{tikzpicture}[x=0.3pt,y=0.3pt,yscale=-1,xscale=1]

    \draw[arrows={triangle 90-triangle 90}, line width=0.75pt] (535,176) -- (625,176);
    \draw[arrows={triangle 90-triangle 90}, line width=0.75pt] (285,176) -- (375,176);
    \draw    (399,51) -- (399.99,306) ;
    \draw [shift={(400,308)}, rotate = 269.78] [color={rgb, 255:red, 0; green, 0; blue, 0 }  ][line width=0.75]    (10.93,-3.29) .. controls (6.95,-1.4) and (3.31,-0.3) .. (0,0) .. controls (3.31,0.3) and (6.95,1.4) .. (10.93,3.29)   ;
    \draw    (499,51) -- (499.99,306) ;
    \draw [shift={(500,308)}, rotate = 269.78] [color={rgb, 255:red, 0; green, 0; blue, 0 }  ][line width=0.75]    (10.93,-3.29) .. controls (6.95,-1.4) and (3.31,-0.3) .. (0,0) .. controls (3.31,0.3) and (6.95,1.4) .. (10.93,3.29)   ;
    \draw    (249,53) .. controls (148,151) and (147,202) .. (247,301) ;
    \draw [shift={(247,301)}, rotate = 224.71] [color={rgb, 255:red, 0; green, 0; blue, 0 }  ][line width=0.75]    (10.93,-3.29) .. controls (6.95,-1.4) and (3.31,-0.3) .. (0,0) .. controls (3.31,0.3) and (6.95,1.4) .. (10.93,3.29)   ;
    \draw [color={rgb, 255:red, 255; green, 255; blue, 255 }  ,draw opacity=1 ][line width=6]    (150,52) .. controls (247.02,150) and (248,199.98) .. (154.82,295.12) ;
    \draw [shift={(149,301)}, rotate = 315] [color={rgb, 255:red, 255; green, 255; blue, 255 }  ,draw opacity=1 ][line width=6]    (40.44,-12.17) .. controls (25.71,-5.16) and (12.23,-1.11) .. (0,0) .. controls (12.23,1.11) and (25.71,5.17) .. (40.44,12.17)   ;
    \draw    (150,52) .. controls (249,152) and (248,202) .. (149,301) ;
    \draw [shift={(149,301)}, rotate = 315] [color={rgb, 255:red, 0; green, 0; blue, 0 }  ][line width=0.75]    (10.93,-3.29) .. controls (6.95,-1.4) and (3.31,-0.3) .. (0,0) .. controls (3.31,0.3) and (6.95,1.4) .. (10.93,3.29)   ;
    \draw    (649,52) .. controls (748,152) and (747,202) .. (648,301) ;
    \draw [shift={(648,301)}, rotate = 315] [color={rgb, 255:red, 0; green, 0; blue, 0 }  ][line width=0.75]    (10.93,-3.29) .. controls (6.95,-1.4) and (3.31,-0.3) .. (0,0) .. controls (3.31,0.3) and (6.95,1.4) .. (10.93,3.29)   ;
    \draw [color={rgb, 255:red, 255; green, 255; blue, 255 }  ,draw opacity=1 ][line width=6]    (750,53) .. controls (651.02,149.04) and (648.08,199.94) .. (742.12,295.12) ;
    \draw [shift={(748,301)}, rotate = 224.71] [color={rgb, 255:red, 255; green, 255; blue, 255 }  ,draw opacity=1 ][line width=6]    (40.44,-12.17) .. controls (25.71,-5.16) and (12.23,-1.11) .. (0,0) .. controls (12.23,1.11) and (25.71,5.17) .. (40.44,12.17)   ;
    \draw    (750,53) .. controls (649,151) and (648,202) .. (748,301) ;
    \draw [shift={(748,301)}, rotate = 224.71] [color={rgb, 255:red, 0; green, 0; blue, 0 }  ][line width=0.75]    (10.93,-3.29) .. controls (6.95,-1.4) and (3.31,-0.3) .. (0,0) .. controls (3.31,0.3) and (6.95,1.4) .. (10.93,3.29)   ;
    \draw    (870,52) .. controls (969,152) and (968,202) .. (869,301) ;
    \draw [shift={(869,301)}, rotate = 315] [color={rgb, 255:red, 0; green, 0; blue, 0 }  ][line width=0.75]    (10.93,-3.29) .. controls (6.95,-1.4) and (3.31,-0.3) .. (0,0) .. controls (3.31,0.3) and (6.95,1.4) .. (10.93,3.29)   ;
    \draw [color={rgb, 255:red, 255; green, 255; blue, 255 }  ,draw opacity=1 ][line width=6]    (968.5,302) .. controls (870.99,204) and (869.52,154.02) .. (963.62,57.94) ;
    \draw [shift={(969.5,52)}, rotate = 135] [color={rgb, 255:red, 255; green, 255; blue, 255 }  ,draw opacity=1 ][line width=6]    (40.44,-12.17) .. controls (25.71,-5.16) and (12.23,-1.11) .. (0,0) .. controls (12.23,1.11) and (25.71,5.17) .. (40.44,12.17)   ;
    \draw    (968.5,302) .. controls (869,202) and (869.5,152) .. (969.5,52) ;
    \draw [shift={(969.5,52)}, rotate = 135] [color={rgb, 255:red, 0; green, 0; blue, 0 }  ][line width=0.75]    (10.93,-3.29) .. controls (6.95,-1.4) and (3.31,-0.3) .. (0,0) .. controls (3.31,0.3) and (6.95,1.4) .. (10.93,3.29)   ;

    \draw[arrows={triangle 90-triangle 90}, line width=0.75pt] (1035,176) -- (1125,176);

    \draw    (1170,53) -- (1170,299) ;  
    \draw [shift={(1170,301)}, rotate = 270] [color={rgb, 255:red, 0; green, 0; blue, 0 }  ][line width=0.75]    (10.93,-3.29) .. controls (6.95,-1.4) and (3.31,-0.3) .. (0,0) .. controls (3.31,0.3) and (6.95,1.4) .. (10.93,3.29)   ;
    
    \draw    (1269.5,301) -- (1269.5,52) ;  
    \draw [shift={(1269.5,50)}, rotate = 90] [color={rgb, 255:red, 0; green, 0; blue, 0 }  ][line width=0.75]    (10.93,-3.29) .. controls (6.95,-1.4) and (3.31,-0.3) .. (0,0) .. controls (3.31,0.3) and (6.95,1.4) .. (10.93,3.29)   ;

    \node at (150,35) {\(x_i\)};
    \node at (260,35) {\(x_j\)};

    \node at (130,180) {\(x_{j+1}\)};
    \node at (260,325) {\(x_{j+2}\)};

    \node at (400,35) {\(x_i\)};
    \node at (510,35) {\(x_{j}\)};

    \node at (640,35) {\(x_i\)};
    \node at (770,180) {\(x_{i+1}\)};
    \node at (640,325) {\(x_{i+2}\)};
    \node at (750,35) {\(x_{j}\)};

    \node at (870,35) {\(x_i\)};
    \node at (990,180) {\(x_{i+1}\)};
    \node at (870,325) {\(x_{i+2}\)};
    \node at (980,325) {\(x_{j}\)};

    \node at (1170,35) {\(x_i\)};  
    \node at (1280,325) {\(x_{j}\)};  

    \draw (0,164) node [anchor=north west][inner sep=0.75pt]  [font=\Large] [align=left] {R2 : };

  \end{tikzpicture}
\end{center}

\begin{center}

  \tikzset{every picture/.style={line width=0.75pt}} 

  \begin{tikzpicture}[x=0.4pt,y=0.4pt,yscale=-1,xscale=1]
    \draw (75,166) node [anchor=north west][inner sep=0.75pt]  [font=\Large] [align=left] {R3 : };

    \draw    (200,52) .. controls (204.54,80.06) and (233.48,142.76) .. (270,176) .. controls (306.34,209.07) and (398.08,223.85) .. (399.98,300.83) ;
    \draw [shift={(400,302)}, rotate = 269.27] [color={rgb, 255:red, 0; green, 0; blue, 0 }  ][line width=0.75]    (10.93,-3.29) .. controls (6.95,-1.4) and (3.31,-0.3) .. (0,0) .. controls (3.31,0.3) and (6.95,1.4) .. (10.93,3.29)   ;
    \draw [color={rgb, 255:red, 255; green, 255; blue, 255 }  ,draw opacity=1 ][line width=6]    (300,54) .. controls (166.87,143.04) and (212,213) .. (256,239) .. controls (294.5,261.75) and (297.02,276.08) .. (299.05,296.05) ;
    \draw [shift={(300,305)}, rotate = 263.16] [color={rgb, 255:red, 255; green, 255; blue, 255 }  ,draw opacity=1 ][line width=6]    (40.44,-12.17) .. controls (25.71,-5.16) and (12.23,-1.11) .. (0,0) .. controls (12.23,1.11) and (25.71,5.17) .. (40.44,12.17)   ;
    \draw    (300,54) .. controls (166.87,143.04) and (212,213) .. (256,239) .. controls (298.9,264.35) and (297.12,279.24) .. (299.79,303.14) ;
    \draw [shift={(300,305)}, rotate = 263.16] [color={rgb, 255:red, 0; green, 0; blue, 0 }  ][line width=0.75]    (10.93,-3.29) .. controls (6.95,-1.4) and (3.31,-0.3) .. (0,0) .. controls (3.31,0.3) and (6.95,1.4) .. (10.93,3.29)   ;
    \draw [color={rgb, 255:red, 255; green, 255; blue, 255 }  ,draw opacity=1 ][line width=6]    (400,52) .. controls (401.45,95.51) and (345.13,137.99) .. (307,178) .. controls (271.73,215.01) and (212.72,261.41) .. (203.28,296.66) ;
    \draw [shift={(202,305)}, rotate = 271.59] [color={rgb, 255:red, 255; green, 255; blue, 255 }  ,draw opacity=1 ][line width=6]    (40.44,-12.17) .. controls (25.71,-5.16) and (12.23,-1.11) .. (0,0) .. controls (12.23,1.11) and (25.71,5.17) .. (40.44,12.17)   ;
    \draw    (400,52) .. controls (401.45,95.51) and (345.13,137.99) .. (307,178) .. controls (269.44,217.41) and (204.97,267.47) .. (202.09,303.37) ;
    \draw [shift={(202,305)}, rotate = 271.59] [color={rgb, 255:red, 0; green, 0; blue, 0 }  ][line width=0.75]    (10.93,-3.29) .. controls (6.95,-1.4) and (3.31,-0.3) .. (0,0) .. controls (3.31,0.3) and (6.95,1.4) .. (10.93,3.29)   ;
    \draw    (568,51) .. controls (572.54,79.06) and (601.48,141.76) .. (638,175) .. controls (674.34,208.07) and (766.08,222.85) .. (767.98,299.83) ;
    \draw [shift={(768,301)}, rotate = 269.27] [color={rgb, 255:red, 0; green, 0; blue, 0 }  ][line width=0.75]    (10.93,-3.29) .. controls (6.95,-1.4) and (3.31,-0.3) .. (0,0) .. controls (3.31,0.3) and (6.95,1.4) .. (10.93,3.29)   ;
    \draw [color={rgb, 255:red, 255; green, 255; blue, 255 }  ,draw opacity=1 ][line width=6]    (669,51) .. controls (695.74,64.95) and (737.48,122.89) .. (741,141) .. controls (744.52,159.11) and (749,177) .. (731,205) .. controls (714.17,231.18) and (681.6,254.74) .. (670.08,292.84) ;
    \draw [shift={(668,301)}, rotate = 281.82] [color={rgb, 255:red, 255; green, 255; blue, 255 }  ,draw opacity=1 ][line width=6]    (40.44,-12.17) .. controls (25.71,-5.16) and (12.23,-1.11) .. (0,0) .. controls (12.23,1.11) and (25.71,5.17) .. (40.44,12.17)   ;
    \draw    (669,51) .. controls (695.74,64.95) and (737.48,122.89) .. (741,141) .. controls (744.52,159.11) and (749,177) .. (731,205) .. controls (713.27,232.58) and (678.08,257.25) .. (668.42,299.08) ;
    \draw [shift={(668,301)}, rotate = 281.82] [color={rgb, 255:red, 0; green, 0; blue, 0 }  ][line width=0.75]    (10.93,-3.29) .. controls (6.95,-1.4) and (3.31,-0.3) .. (0,0) .. controls (3.31,0.3) and (6.95,1.4) .. (10.93,3.29)   ;
    \draw [color={rgb, 255:red, 255; green, 255; blue, 255 }  ,draw opacity=1 ][line width=6]    (768,51) .. controls (769.45,94.51) and (713.13,136.99) .. (675,177) .. controls (639.73,214.01) and (580.72,260.41) .. (571.28,295.66) ;
    \draw [shift={(570,304)}, rotate = 271.59] [color={rgb, 255:red, 255; green, 255; blue, 255 }  ,draw opacity=1 ][line width=6]    (40.44,-12.17) .. controls (25.71,-5.16) and (12.23,-1.11) .. (0,0) .. controls (12.23,1.11) and (25.71,5.17) .. (40.44,12.17)   ;
    \draw    (768,51) .. controls (769.45,94.51) and (713.13,136.99) .. (675,177) .. controls (637.44,216.41) and (572.97,266.47) .. (570.09,302.37) ;
    \draw [shift={(570,304)}, rotate = 271.59] [color={rgb, 255:red, 0; green, 0; blue, 0 }  ][line width=0.75]    (10.93,-3.29) .. controls (6.95,-1.4) and (3.31,-0.3) .. (0,0) .. controls (3.31,0.3) and (6.95,1.4) .. (10.93,3.29)   ;
    \draw[arrows={triangle 90-triangle 90}, line width=0.75pt] (437,180) -- (545,180);

    \node at (200,35) {\(x_i\)};
    \node at (300,35) {\(x_j\)};
    \node at (400,35) {\(x_k\)};

    \node at (280,150) {\(x_{i+1}\)};
    \node at (300,325) {\(x_{j+1}\)};
    \node at (400,325) {\(x_{i+2}\)};

    \node at (570,35) {\(x_i\)};
    \node at (670,35) {\(x_{j}\)};
    \node at (770,35) {\(x_{k}\)};

    \node at (705,195) {\(x_{i+1}\)};
    \node at (670,325) {\(x_{j+1}\)};
    \node at (770,325) {\(x_{i+2}\)};
  \end{tikzpicture}
\end{center}

We label each arc in the knot diagram as shown in the figures above, ensuring that the indices increase sequentially as the strands pass under crossings. 
This labeling allows us to define the following relations for each crossing:

\begin{center}
  \tikzset{every picture/.style={line width=0.75pt}} 

  \begin{tikzpicture}[x=0.3pt,y=0.3pt,yscale=-1,xscale=1]

    \draw    (598,74) -- (672.09,148.09) ;
    \draw [shift={(673.5,149.5)}, rotate = 225] [color={rgb, 255:red, 0; green, 0; blue, 0 }  ][line width=0.75]    (10.93,-3.29) .. controls (6.95,-1.4) and (3.31,-0.3) .. (0,0) .. controls (3.31,0.3) and (6.95,1.4) .. (10.93,3.29)   ;
    \draw    (598.5,275.5) -- (673.59,200.41) ;
    \draw [shift={(675,199)}, rotate = 135] [color={rgb, 255:red, 0; green, 0; blue, 0 }  ][line width=0.75]    (10.93,-3.29) .. controls (6.95,-1.4) and (3.31,-0.3) .. (0,0) .. controls (3.31,0.3) and (6.95,1.4) .. (10.93,3.29)   ;
    \draw    (475.5,199) -- (548.09,271.59) ;
    \draw [shift={(549.5,273)}, rotate = 225] [color={rgb, 255:red, 0; green, 0; blue, 0 }  ][line width=0.75]    (10.93,-3.29) .. controls (6.95,-1.4) and (3.31,-0.3) .. (0,0) .. controls (3.31,0.3) and (6.95,1.4) .. (10.93,3.29)   ;
    \draw    (473,151) -- (548.59,75.41) ;
    \draw [shift={(550,74)}, rotate = 135] [color={rgb, 255:red, 0; green, 0; blue, 0 }  ][line width=0.75]    (10.93,-3.29) .. controls (6.95,-1.4) and (3.31,-0.3) .. (0,0) .. controls (3.31,0.3) and (6.95,1.4) .. (10.93,3.29)   ;
    \draw [color={rgb, 255:red, 255; green, 255; blue, 255 }  ,draw opacity=1 ][fill={rgb, 255:red, 255; green, 255; blue, 255 }  ,fill opacity=1 ][line width=6]    (697.5,50) -- (454.85,293.62) ;
    \draw [shift={(448.5,300)}, rotate = 314.89] [color={rgb, 255:red, 255; green, 255; blue, 255 }  ,draw opacity=1 ][line width=6]    (40.44,-12.17) .. controls (25.71,-5.16) and (12.23,-1.11) .. (0,0) .. controls (12.23,1.11) and (25.71,5.17) .. (40.44,12.17)   ;
    \draw    (697.5,50) -- (449.91,298.58) ;
    \draw [shift={(448.5,300)}, rotate = 314.89] [color={rgb, 255:red, 0; green, 0; blue, 0 }  ][line width=0.75]    (10.93,-3.29) .. controls (6.95,-1.4) and (3.31,-0.3) .. (0,0) .. controls (3.31,0.3) and (6.95,1.4) .. (10.93,3.29)   ;
    \draw [color={rgb, 255:red, 255; green, 255; blue, 255 }  ,draw opacity=1 ][line width=6]    (451.5,52.5) -- (692.14,293.63) ;
    \draw [shift={(698.5,300)}, rotate = 225.06] [color={rgb, 255:red, 255; green, 255; blue, 255 }  ,draw opacity=1 ][line width=6]    (40.44,-12.17) .. controls (25.71,-5.16) and (12.23,-1.11) .. (0,0) .. controls (12.23,1.11) and (25.71,5.17) .. (40.44,12.17)   ;
    \draw    (451.5,52.5) -- (697.09,298.58) ;
    \draw [shift={(698.5,300)}, rotate = 225.06] [color={rgb, 255:red, 0; green, 0; blue, 0 }  ][line width=0.75]    (10.93,-3.29) .. controls (6.95,-1.4) and (3.31,-0.3) .. (0,0) .. controls (3.31,0.3) and (6.95,1.4) .. (10.93,3.29)   ;
    \draw [color={rgb, 255:red, 255; green, 255; blue, 255 }  ,draw opacity=1 ][line width=6]    (304.5,49) -- (52.36,301.63) ;
    \draw [shift={(46,308)}, rotate = 314.94] [color={rgb, 255:red, 255; green, 255; blue, 255 }  ,draw opacity=1 ][line width=6]    (40.44,-12.17) .. controls (25.71,-5.16) and (12.23,-1.11) .. (0,0) .. controls (12.23,1.11) and (25.71,5.17) .. (40.44,12.17)   ;
    \draw    (78.5,151) -- (153.09,76.41) ;
    \draw [shift={(154.5,75)}, rotate = 135] [color={rgb, 255:red, 0; green, 0; blue, 0 }  ][line width=0.75]    (10.93,-3.29) .. controls (6.95,-1.4) and (3.31,-0.3) .. (0,0) .. controls (3.31,0.3) and (6.95,1.4) .. (10.93,3.29)   ;
    \draw    (203.5,273) -- (275.09,201.41) ;
    \draw [shift={(276.5,200)}, rotate = 135] [color={rgb, 255:red, 0; green, 0; blue, 0 }  ][line width=0.75]    (10.93,-3.29) .. controls (6.95,-1.4) and (3.31,-0.3) .. (0,0) .. controls (3.31,0.3) and (6.95,1.4) .. (10.93,3.29)   ;
    \draw    (78.5,201) -- (151.09,273.59) ;
    \draw [shift={(152.5,275)}, rotate = 225] [color={rgb, 255:red, 0; green, 0; blue, 0 }  ][line width=0.75]    (10.93,-3.29) .. controls (6.95,-1.4) and (3.31,-0.3) .. (0,0) .. controls (3.31,0.3) and (6.95,1.4) .. (10.93,3.29)   ;
    \draw    (201.5,75) -- (274.09,147.59) ;
    \draw [shift={(275.5,149)}, rotate = 225] [color={rgb, 255:red, 0; green, 0; blue, 0 }  ][line width=0.75]    (10.93,-3.29) .. controls (6.95,-1.4) and (3.31,-0.3) .. (0,0) .. controls (3.31,0.3) and (6.95,1.4) .. (10.93,3.29)   ;
    \draw [color={rgb, 255:red, 255; green, 255; blue, 255 }  ,draw opacity=1 ][line width=6]    (51.5,50) -- (296.12,293.65) ;
    \draw [shift={(302.5,300)}, rotate = 224.89] [color={rgb, 255:red, 255; green, 255; blue, 255 }  ,draw opacity=1 ][line width=6]    (40.44,-12.17) .. controls (25.71,-5.16) and (12.23,-1.11) .. (0,0) .. controls (12.23,1.11) and (25.71,5.17) .. (40.44,12.17)   ;
    \draw    (51.5,50) -- (301.08,298.59) ;
    \draw [shift={(302.5,300)}, rotate = 224.89] [color={rgb, 255:red, 0; green, 0; blue, 0 }  ][line width=0.75]    (10.93,-3.29) .. controls (6.95,-1.4) and (3.31,-0.3) .. (0,0) .. controls (3.31,0.3) and (6.95,1.4) .. (10.93,3.29)   ;
    \draw [color={rgb, 255:red, 255; green, 255; blue, 255 }  ,draw opacity=1 ][line width=6]    (304.5,49) -- (60.85,293.62) ;
    \draw [shift={(54.5,300)}, rotate = 314.89] [color={rgb, 255:red, 255; green, 255; blue, 255 }  ,draw opacity=1 ][line width=6]    (40.44,-12.17) .. controls (25.71,-5.16) and (12.23,-1.11) .. (0,0) .. controls (12.23,1.11) and (25.71,5.17) .. (40.44,12.17)   ;
    \draw    (300.25,53) -- (55.91,298.58) ;
    \draw [shift={(54.5,300)}, rotate = 314.85] [color={rgb, 255:red, 0; green, 0; blue, 0 }  ][line width=0.75]    (10.93,-3.29) .. controls (6.95,-1.4) and (3.31,-0.3) .. (0,0) .. controls (3.31,0.3) and (6.95,1.4) .. (10.93,3.29)   ;

    \draw (679,127.4) node [anchor=north west][inner sep=0.75pt]  [font=\normalsize]  {$x_{i}$};
    \draw (520,279.4) node [anchor=north west][inner sep=0.75pt]  [font=\normalsize]  {$x_{i+1}$};
    \draw (520,49.4) node [anchor=north west][inner sep=0.75pt]  [font=\normalsize]  {$u$};
    \draw (675,209.4) node [anchor=north west][inner sep=0.75pt]  [font=\normalsize]  {$u$};
    \draw (274,211.4) node [anchor=north west][inner sep=0.75pt]  [font=\normalsize]  {$x_{i+1}$};
    \draw (105,55) node [anchor=north west][inner sep=0.75pt]  [font=\normalsize]  {$x_{i}$};
    \draw (280,120.4) node [anchor=north west][inner sep=0.75pt]  [font=\normalsize]  {$u$};
    \draw (125,277.4) node [anchor=north west][inner sep=0.75pt]  [font=\normalsize]  {$u$};

    \draw (-150,340) node [anchor=north west][inner sep=0.75pt]  [font=\normalsize]  {relation : };

    \draw (80,340) node [anchor=north west][inner sep=0.75pt]  [font=\normalsize]  {$x_{i+1}u^{-1}x_{i}^{-1}u$};

    \draw (480,340) node [anchor=north west][inner sep=0.75pt]  [font=\normalsize]  {$x_{i+1}ux_{i}^{-1}u^{-1}$};

  \end{tikzpicture}
\end{center}
Then, we have a presentation of Wirtinger representation of $\pi_{K} = \pi_{1}(S^{3}\setminus K)$, which is $\Braket{x_{1}, \ldots, x_{n} | r_{1}, \ldots, r_{n}}$ where $r_{i} = x_{i+1} u^{\pm 1} x_{i}^{-1} u^{\mp 1}$. Since 
\[
    \prod_{i=1}^{n} r_{i}^{\pm 1} = 1,
\]
we can omit one relation and define $\Braket{X | R} := \Braket{x_{1}, \ldots, x_{n} | r_{1}, \ldots, r_{n-1}}$. Let $\phi : F_{n} \twoheadrightarrow \pi_{K}$ be the surjective homomorphism, $\alpha$ be the abelianization homomorphism
\[
  \alpha : \pi_{K} \to \pi_{K}/[\pi_{K},\pi_{K}] \cong \mathbb{Z} \cong \braket{t} ; x_{i} \mapsto t
\]
and $\rho $ be the homomorphism $\pi_{K} \to GL_{m}(\mathbb{C})$. The maps $\phi$, $\rho$ and $\alpha$ naturally induce ring homomorphisms $\tilde{\phi} : \mathbb{Z}F_{n} \to \mathbb{Z}\pi_{K}$, $\tilde{\rho} : \mathbb{Z}\pi_{K} \to \mathbb{Z}GL_{m}(\mathbb{C})$ and $\tilde{\alpha} : \mathbb{Z}\pi_{K} \to \mathbb{Z}[t^{\pm 1}]$. We define the composition map
\[
  \Phi : \mathbb{Z}F_{n}
  \overset{\widetilde{\phi}}{\longrightarrow}
  \mathbb{Z}\pi_{K}
  \overset{\widetilde{\rho} \otimes \widetilde{\alpha}}{\longrightarrow} M_{m}(\mathbb{C}[t^{\pm 1}])
\]
where $F_{n} = \braket{x_{1}, \ldots, x_{n}}$. Let us consider the $(n-1) \times n$ matrix $M$ whose $(i,j)$th entry is the $m \times m$ matrix
\[
    \Phi\left(  \frac{\partial r_{i}}{\partial x_{j}} \right) \in M_{m}(\mathbb{C}[t^{\pm 1}]).
\]
For $1 \leq l \leq n$, let us denote by $M_{l}$ the $(n-1) \times (n-1)$ matrix obtained from $M$ by excluding the $l$th column. The twisted Alexander invariant of $K$ associated with a representation $\rho : \pi_{K} \to GL_{m}(\mathbb{C})$ is defined to be the rational function
\[
    \Delta_{K,\rho}(t) = \frac{\det M_{j}}{\det (1-\Phi(x_{j}))}.
\]
Goda~\cite{H-Goda} proved\footnote{Goda~\cite{H-Goda} used knot graphs. A knot graph is constructed not by a finitely presented group but by a knot diagram with a base point.} $\zeta_{\rho} (X,R,B)^{-1} \stackrel{\cdot}{=} \det M_{1}$, where $B$ is set as $r_{i} : x_{i} = u^{\pm 1} x_{i+1} u^{\mp 1}$ and $\stackrel{\cdot}{=}$ means left-hand side and right-hand side are equal up to units in $\mathbb{C}[t^{\pm 1}]$. Since $\Phi(1 - x_{i}) = \Phi(1 - x_{j})$ for all $i,j \in \{1,\ldots, n\}$, we shall prove that $\det M_{j}$ is an invariant of the knot $K$ in the following by using weighted graphs. 

We consider only Reidemeister move 3 here\footnote{For Reidemeister move 1, if necessary, we may use Proposition~\ref{Prop-Base-Choice} to assume the relation takes the form $x_i = x_{i+1}$.}. This move induces a change in the Wirtinger presentation of the knot group.
\begin{center}
    \tikzset{every picture/.style={line width=0.75pt}} 
    
    \begin{tikzpicture}[x=0.45pt,y=0.45pt,yscale=-1,xscale=1]
        \draw (55,345) node [anchor=north west][inner sep=0.75pt]    {$\textup{(P) :} \begin{cases}
        x_{i} =x_{j} x_{i+1} x_{j}^{-1} & \\
        x_{i+1} =x_{k} x_{i+2} x_{k}^{-1} & \\
        x_{j} =x_{k} x_{j+1} x_{k}^{-1} &
        \end{cases}$};
        \draw (460,345) node [anchor=north west][inner sep=0.75pt]    {$(\textup{P}') :  \begin{cases}
        x_{i} =x_{k} x_{i+1} x_{k}^{-1} & \\
        x_{i+1} =x_{j+1} x_{i+2} x_{j+1}^{-1} & \\
        x_{j} =x_{k} x_{j+1} x_{k}^{-1} &
        \end{cases}$};
        
        \draw[arrows={triangle 90-triangle 90}] (350,415) -- (450,415);
    \end{tikzpicture}
\end{center}
The corresponding group-weighted graphs are described below:
\begin{center}
    \begin{tikzpicture}[x=1.4cm,y=1.4cm]
        \node (vi) at (-1.3,1) {};
        \node (vi1) at (0,1) {};
        \node (vi2) at (1.3,1) {};
        \node (vj) at (-1.3, 0) {};
        \node (vj1) at (0,0) {};
        \node (vk) at (-1.3,-1) {};
        \draw (vi) node[left] {\footnotesize$v_{i}$};
        \draw (vi1) node[above] {\footnotesize$v_{i+1}$};
        \draw (vi2) node[right] {\footnotesize$v_{i+2}$};
        \draw (vj) node[left] {\footnotesize$v_{j}$};
        \draw (vj1) node[right] {\footnotesize$v_{j+1}$};
        \draw (vk) node[below] {\footnotesize$v_{k}$};
        
        \fill (vi) circle [radius=1pt];
        \fill (vi1) circle [radius=1pt];
        \fill (vi2) circle [radius=1pt];
        \fill (vj) circle [radius=1pt];
        \fill (vj1) circle [radius=1pt];
        \fill (vk) circle [radius=1pt];
        
        \draw[line width = 1pt, arrows={ - latex}] (vi) to [out=30, in=150] node[above] {\small$x_{j}$} (vi1);
        \draw[line width = 1pt, arrows={ - latex}] (vi) to node[left] {\footnotesize$1-x_{i}$} (vj);
        \draw[line width = 1pt, arrows={ - latex}] (vi1) to [out=30, in=150] node[above] {\small$x_{k}$} (vi2);
        \draw[line width = 1pt, arrows={ - latex}] (vi1) to [out=330, in=0, looseness = 1.8] node[right] {\small$1-x_{j}^{-1}x_{i}x_{j}$} (vk);
        \draw[line width = 1pt, arrows={ - latex}] (vj) to node[above] {\small$x_{k}$} (vj1);
        \draw[line width = 1pt, arrows={ - latex}] (vj) to node[left] {\small$1-x_{j}$} (vk);
        
        \draw[arrows={triangle 90-triangle 90}] (2.1,0) -- (3.1,0);
        
        \node (vi') at (4.7,1) {};
        \node (vi1') at (6,1) {};
        \node (vi2') at (7.3,1) {};
        \node (vj') at (4.7, 0) {};
        \node (vj1') at (6,0) {};
        \node (vk') at (4.7,-1) {};
        \draw (vi') node[left] {\footnotesize$v_{i}$};
        \draw (vi1') node[above] {\footnotesize$v_{i+1}$};
        \draw (vi2') node[right] {\footnotesize$v_{i+2}$};
        \draw (vj') node[left] {\footnotesize$v_{j}$};
        \draw (vj1') node[right] {\footnotesize$v_{j+1}$};
        \draw (vk') node[below] {\footnotesize$v_{k}$};
        
        \fill (vi') circle [radius=1pt];
        \fill (vi1') circle [radius=1pt];
        \fill (vi2') circle [radius=1pt];
        \fill (vj') circle [radius=1pt];
        \fill (vj1') circle [radius=1pt];
        \fill (vk') circle [radius=1pt];
        
        \draw[line width = 1pt, arrows={ - latex}] (vi') to [out=30, in=150] node[above] {\small$x_{k}$} (vi1');
        \draw[line width = 1pt, arrows={ - latex}] (vi') to [out=220, in=140, looseness = 1.0] node[left] {\small$1-x_{i}$} (vk');
        \draw[line width = 1pt, arrows={ - latex}] (vj') to node[above] {\small$x_{k}$} (vj1');
        \draw[line width = 1pt, arrows={ - latex}] (vj') to node[right] {\small$1-x_{j}$} (vk');
        \draw[line width = 1pt, arrows={ - latex}] (vi1') to [out=30, in=150] node[above] {\small$x_{k}^{-1}x_{j}x_{k}$} (vi2');
        \draw[line width = 1pt, arrows={ - latex}] (vi1') to node[right] {\small$1-x_{k}^{-1}x_{i}x_{k}$} (vj1');
    \end{tikzpicture}
\end{center}

Two group-weighted graphs are equivalent to the following group-weighted graph:
\begin{center}
\begin{tikzpicture}[x=1.5cm,y=1.5cm]
      \node (vi) at (-1.3,1) {};
      \node (vi1) at (0,1) {};
      \node (vi2) at (1.3,1) {};
      \node (vj) at (-1.3, 0) {};
      \node (vj1) at (0,0) {};
      \node (vk) at (-1.3,-1) {};
      \draw (vi) node[left] {\footnotesize$v_{i}$};
      \draw (vi1) node[above] {\footnotesize$v_{i+1}$};
      \draw (vi2) node[right] {\footnotesize$v_{i+2}$};
      \draw (vj) node[left] {\footnotesize$v_{j}$};
      \draw (vj1) node[right] {\footnotesize$v_{j+1}$};
      \draw (vk) node[below] {\footnotesize$v_{k}$};

     \fill (vi) circle [radius=1pt];
     \fill (vi1) circle [radius=1pt];
     \fill (vi2) circle [radius=1pt];
     \fill (vj) circle [radius=1pt];
     \fill (vj1) circle [radius=1pt];
     \fill (vk) circle [radius=1pt];

     \draw[line width = 1pt, arrows={ - latex}] (vi) to [out=30, in=150] node[above] {\small$x_{j}x_{k}$} (vi2);
     \draw[line width = 1pt, arrows={ - latex}] (vi) to node[right] {\small$(1-x_{i})x_{k}$} (vj1);
     \draw[line width = 1pt, arrows={ - latex}] (vi) to [out=220, in=140, looseness = 1.0] node[left] {\small$x_{j}-x_{i}x_{j} + (1-x_{i})(1-x_{j})$} (vk);
     \draw[line width = 1pt, arrows={ - latex}] (vj) to node[above] {\small$x_{k}$} (vj1);
     \draw[line width = 1pt, arrows={ - latex}] (vj) to node[right] {\small$1-x_{j}$} (vk);
    
    \end{tikzpicture}
\end{center}

Therefore, according to Proposition \ref{Prop-GrpEquivIsWtEquiv} and Theorem \ref{Thm-TrfmPreserveZeta}, $\Delta_{K,\rho}(t)$ is an invariant of the knot $K$. Furthermore, based on Theorem \ref{Thm-Group-Weighted-Equiv-And-Strong-Tietze}, there exists a representation which is strongly Tietze equivalent to \textup{(P)}. In fact, by carefully tracking the transformations used in the proof, we can identify this representation as \textup{(P}$'$\textup{)}.

As indicated by Proposition \ref{Prop-ElemProperty}, we now know elementary properties of the twisted Alexander polynomial:
\begin{itemize}
\item[{}] $\rho \sim \rho' \Longrightarrow \Delta_{K,\rho}(t) = \Delta_{K,\rho'}(t)$
\item[{}] $\rho \sim \rho_{1}\oplus \rho_{2} \Longrightarrow \Delta_{K,\rho}(t) = \Delta_{K,\rho_{1}}(t)\Delta_{K,\rho_{2}}(t)$.
\end{itemize}

\subsection{Quandles}
We have studied weights derived from group representations. To define knot invariants, it is not necessary to construct a detailed sequence of small graph deformations. 

Let us consider the group-weighted graphs made from knots before and after Reidemeister move 3. We denote sum of weights along all paths from $v_{i}$ to $v_{j}$ as $w(v_{i}\to v_{j})$. The key observation is that the following weights are preserved:
\begin{itemize}
    \item $w(v_{i}\to v_{i+2}) = x_{j}x_{k}$
    \item $w(v_{i}\to v_{j+1}) = x_{k}-x_{i}x_{k}$
    \item $w(v_{i}\to v_{k}) = 1-x_{j}$
    \item $w(v_{j}\to v_{j+1}) = x_{k}$
    \item $w(v_{j}\to v_{k}) = 1-x_{j}$
\end{itemize}

This preservation of weights guarantees that the holonomy structure of the knot graph remains invariant under this transformation. While we have so far described this phenomenon in terms of weighted paths, we will see later that this structure naturally leads to the notion of ``Alexander pairs'' and ``quandles'', providing a more algebraic interpretation of these preserved quantities.

More importantly, the essential property is that the graph holonomy remains invariant before and after deformation. This observation allows us to generalize weights on graphs from group presentations to quandle presentations. Under specific conditions, weights are determined as an ``Alexander pair,'' defined by Ishii and Oshiro~\cite{Ishii-Oshiro}, if and only if transformations of weights preserve holonomy as we will prove in Theore~\ref{Thm-PHiffAlexPair}.

The concept of a ``quandle'' was introduced by Joyce~\cite{Joyce} as an algebraic structure used to study knots. Quandles are closely related to ``Reidemeister moves'' and provide a natural framework for defining knot invariants.

\begin{dfn}
  Let $X$ be a non-empty set. If there exists a map $* : X\times X\to X$ satisfying the following conditions, then $(X, \ast)$ is called a \textit{quandle}:
  \begin{itemize}
    \item[(1)] For all $a \in X$, $a \ast a = a$.
    \item[(2)] For all $b \in X$, $*b : X\to X ; a\mapsto a*b$ is bijective.
    \item[(3)] For all $a, b, c \in X$, $(a\ast b)\ast c = (a\ast c)\ast (b\ast c)$.
    \hfill{$\square$}
  \end{itemize}
\end{dfn}

\begin{rem}
  Quandle can be regarded as a structure that algebraically handles knot crossings. Let us illustrate its meaning. Given a knot diagram, we assign elements of a set to each arc as follows:

  \begin{center}
    \tikzset{every picture/.style={line width=0.75pt}} 

    \begin{tikzpicture}[x=0.5pt,y=0.5pt,yscale=-1,xscale=1]

      \draw    (501,51) -- (352.41,199.59) ;
      \draw [shift={(351,201)}, rotate = 315] [color={rgb, 255:red, 0; green, 0; blue, 0 }  ][line width=0.75]    (10.93,-3.29) .. controls (6.95,-1.4) and (3.31,-0.3) .. (0,0) .. controls (3.31,0.3) and (6.95,1.4) .. (10.93,3.29)   ;
      \draw    (51,51) -- (198.59,198.59) ;
      \draw [shift={(200,200)}, rotate = 225] [color={rgb, 255:red, 0; green, 0; blue, 0 }  ][line width=0.75]    (10.93,-3.29) .. controls (6.95,-1.4) and (3.31,-0.3) .. (0,0) .. controls (3.31,0.3) and (6.95,1.4) .. (10.93,3.29)   ;
      \draw [color={rgb, 255:red, 255; green, 255; blue, 255 }  ,draw opacity=1 ][line width=6]    (200.5,50.5) -- (56.86,194.14) ;
      \draw [shift={(50.5,200.5)}, rotate = 315] [color={rgb, 255:red, 255; green, 255; blue, 255 }  ,draw opacity=1 ][line width=6]    (40.44,-12.17) .. controls (25.71,-5.16) and (12.23,-1.11) .. (0,0) .. controls (12.23,1.11) and (25.71,5.17) .. (40.44,12.17)   ;
      \draw    (200.5,50.5) -- (51.91,199.09) ;
      \draw [shift={(50.5,200.5)}, rotate = 315] [color={rgb, 255:red, 0; green, 0; blue, 0 }  ][line width=0.75]    (10.93,-3.29) .. controls (6.95,-1.4) and (3.31,-0.3) .. (0,0) .. controls (3.31,0.3) and (6.95,1.4) .. (10.93,3.29)   ;
      \draw [color={rgb, 255:red, 255; green, 255; blue, 255 }  ,draw opacity=1 ][fill={rgb, 255:red, 0; green, 0; blue, 0 }  ,fill opacity=1 ][line width=6]    (351.5,51.5) -- (494.14,194.14) ;
      \draw [shift={(500.5,200.5)}, rotate = 225] [color={rgb, 255:red, 255; green, 255; blue, 255 }  ,draw opacity=1 ][line width=6]    (40.44,-12.17) .. controls (25.71,-5.16) and (12.23,-1.11) .. (0,0) .. controls (12.23,1.11) and (25.71,5.17) .. (40.44,12.17)   ;
      \draw    (351.5,51.5) -- (499.09,199.09) ;
      \draw [shift={(500.5,200.5)}, rotate = 225] [color={rgb, 255:red, 0; green, 0; blue, 0 }  ][line width=0.75]    (10.93,-3.29) .. controls (6.95,-1.4) and (3.31,-0.3) .. (0,0) .. controls (3.31,0.3) and (6.95,1.4) .. (10.93,3.29)   ;

      \draw (62,38.4) node [anchor=north west][inner sep=0.75pt]    {$a$};
      \draw (176,40.4) node [anchor=north west][inner sep=0.75pt]    {$b$};
      \draw (45,168.4) node [anchor=north west][inner sep=0.75pt]    {$b$};
      \draw (198,171.4) node [anchor=north west][inner sep=0.75pt]    {$a\ast b$};
      \draw (482,38.4) node [anchor=north west][inner sep=0.75pt]    {$a$};
      \draw (359,37.4) node [anchor=north west][inner sep=0.75pt]    {$b$};
      \draw (499,171.4) node [anchor=north west][inner sep=0.75pt]    {$b$};
      \draw (295,167.4) node [anchor=north west][inner sep=0.75pt]    {$a\ast ^{-1} b$};

    \end{tikzpicture}
  \end{center}
  Then, the algebraic conditions (1)-(3) that the operation $\ast^{\pm 1}$ must satisfy correspond to Reidemeister moves (1)-(3). For instance, condition (3) corresponds to Reidemeister move 3:
  \begin{center}
    \tikzset{every picture/.style={line width=0.75pt}} 

    \begin{tikzpicture}[x=0.5pt,y=0.5pt,yscale=-1,xscale=1]

        \draw    (51,52) .. controls (55.54,80.06) and (84.48,142.76) .. (121,176) .. controls (157.34,209.07) and (249.08,223.85) .. (250.98,300.83) ;
        \draw [shift={(251,302)}, rotate = 269.27] [color={rgb, 255:red, 0; green, 0; blue, 0 }  ][line width=0.75]    (10.93,-3.29) .. controls (6.95,-1.4) and (3.31,-0.3) .. (0,0) .. controls (3.31,0.3) and (6.95,1.4) .. (10.93,3.29)   ;
        \draw [color={rgb, 255:red, 255; green, 255; blue, 255 }  ,draw opacity=1 ][line width=6]    (151,54) .. controls (17.87,143.04) and (63,213) .. (107,239) .. controls (145.5,261.75) and (148.02,276.08) .. (150.05,296.05) ;
        \draw [shift={(151,305)}, rotate = 263.16] [color={rgb, 255:red, 255; green, 255; blue, 255 }  ,draw opacity=1 ][line width=6]    (40.44,-12.17) .. controls (25.71,-5.16) and (12.23,-1.11) .. (0,0) .. controls (12.23,1.11) and (25.71,5.17) .. (40.44,12.17)   ;
        \draw    (151,54) .. controls (17.87,143.04) and (63,213) .. (107,239) .. controls (149.9,264.35) and (148.12,279.24) .. (150.79,303.14) ;
        \draw [shift={(151,305)}, rotate = 263.16] [color={rgb, 255:red, 0; green, 0; blue, 0 }  ][line width=0.75]    (10.93,-3.29) .. controls (6.95,-1.4) and (3.31,-0.3) .. (0,0) .. controls (3.31,0.3) and (6.95,1.4) .. (10.93,3.29)   ;
        \draw [color={rgb, 255:red, 255; green, 255; blue, 255 }  ,draw opacity=1 ][line width=6]    (251,52) .. controls (252.45,95.51) and (196.13,137.99) .. (158,178) .. controls (122.73,215.01) and (63.72,261.41) .. (54.28,296.66) ;
        \draw [shift={(53,305)}, rotate = 271.59] [color={rgb, 255:red, 255; green, 255; blue, 255 }  ,draw opacity=1 ][line width=6]    (40.44,-12.17) .. controls (25.71,-5.16) and (12.23,-1.11) .. (0,0) .. controls (12.23,1.11) and (25.71,5.17) .. (40.44,12.17)   ;
        \draw    (251,52) .. controls (252.45,95.51) and (196.13,137.99) .. (158,178) .. controls (120.44,217.41) and (55.97,267.47) .. (53.09,303.37) ;
        \draw [shift={(53,305)}, rotate = 271.59] [color={rgb, 255:red, 0; green, 0; blue, 0 }  ][line width=0.75]    (10.93,-3.29) .. controls (6.95,-1.4) and (3.31,-0.3) .. (0,0) .. controls (3.31,0.3) and (6.95,1.4) .. (10.93,3.29)   ;
        \draw    (419,51) .. controls (423.54,79.06) and (452.48,141.76) .. (489,175) .. controls (525.34,208.07) and (617.08,222.85) .. (618.98,299.83) ;
        \draw [shift={(619,301)}, rotate = 269.27] [color={rgb, 255:red, 0; green, 0; blue, 0 }  ][line width=0.75]    (10.93,-3.29) .. controls (6.95,-1.4) and (3.31,-0.3) .. (0,0) .. controls (3.31,0.3) and (6.95,1.4) .. (10.93,3.29)   ;
        \draw [color={rgb, 255:red, 255; green, 255; blue, 255 }  ,draw opacity=1 ][line width=6]    (520,51) .. controls (546.74,64.95) and (588.48,122.89) .. (592,141) .. controls (595.52,159.11) and (600,177) .. (582,205) .. controls (565.17,231.18) and (532.6,254.74) .. (521.08,292.84) ;
        \draw [shift={(519,301)}, rotate = 281.82] [color={rgb, 255:red, 255; green, 255; blue, 255 }  ,draw opacity=1 ][line width=6]    (40.44,-12.17) .. controls (25.71,-5.16) and (12.23,-1.11) .. (0,0) .. controls (12.23,1.11) and (25.71,5.17) .. (40.44,12.17)   ;
        \draw    (520,51) .. controls (546.74,64.95) and (588.48,122.89) .. (592,141) .. controls (595.52,159.11) and (600,177) .. (582,205) .. controls (564.27,232.58) and (529.08,257.25) .. (519.42,299.08) ;
        \draw [shift={(519,301)}, rotate = 281.82] [color={rgb, 255:red, 0; green, 0; blue, 0 }  ][line width=0.75]    (10.93,-3.29) .. controls (6.95,-1.4) and (3.31,-0.3) .. (0,0) .. controls (3.31,0.3) and (6.95,1.4) .. (10.93,3.29)   ;
        \draw [color={rgb, 255:red, 255; green, 255; blue, 255 }  ,draw opacity=1 ][line width=6]    (619,51) .. controls (620.45,94.51) and (564.13,136.99) .. (526,177) .. controls (490.73,214.01) and (431.72,260.41) .. (422.28,295.66) ;
        \draw [shift={(421,304)}, rotate = 271.59] [color={rgb, 255:red, 255; green, 255; blue, 255 }  ,draw opacity=1 ][line width=6]    (40.44,-12.17) .. controls (25.71,-5.16) and (12.23,-1.11) .. (0,0) .. controls (12.23,1.11) and (25.71,5.17) .. (40.44,12.17)   ;
        \draw    (619,51) .. controls (620.45,94.51) and (564.13,136.99) .. (526,177) .. controls (488.44,216.41) and (423.97,266.47) .. (421.09,302.37) ;
        \draw [shift={(421,304)}, rotate = 271.59] [color={rgb, 255:red, 0; green, 0; blue, 0 }  ][line width=0.75]    (10.93,-3.29) .. controls (6.95,-1.4) and (3.31,-0.3) .. (0,0) .. controls (3.31,0.3) and (6.95,1.4) .. (10.93,3.29)   ;
        \draw [line width=0.75]    (288,180) -- (396,180) ;
        \draw [shift={(398,180)}, rotate = 180] [color={rgb, 255:red, 0; green, 0; blue, 0 }  ][line width=0.75]    (10.93,-3.29) .. controls (6.95,-1.4) and (3.31,-0.3) .. (0,0) .. controls (3.31,0.3) and (6.95,1.4) .. (10.93,3.29)   ;
        \draw [shift={(286,180)}, rotate = 0] [color={rgb, 255:red, 0; green, 0; blue, 0 }  ][line width=0.75]    (10.93,-3.29) .. controls (6.95,-1.4) and (3.31,-0.3) .. (0,0) .. controls (3.31,0.3) and (6.95,1.4) .. (10.93,3.29)   ;

        \draw (45,31.4) node [anchor=north west][inner sep=0.75pt]    {$a$};
        \draw (145,32.4) node [anchor=north west][inner sep=0.75pt]    {$b$};
        \draw (245,31.4) node [anchor=north west][inner sep=0.75pt]    {$c$};
        \draw (412,30.4) node [anchor=north west][inner sep=0.75pt]    {$a$};
        \draw (514,29.4) node [anchor=north west][inner sep=0.75pt]    {$b$};
        \draw (614,29.4) node [anchor=north west][inner sep=0.75pt]    {$c$};
        \draw (104,134.4) node [anchor=north west][inner sep=0.75pt]    {$a\ast b$};
        \draw (223,307.4) node [anchor=north west][inner sep=0.75pt]    {$( a\ast b) \ast c$};
        \draw (136,307.4) node [anchor=north west][inner sep=0.75pt]    {$b\ast c$};
        \draw (48,307.4) node [anchor=north west][inner sep=0.75pt]    {$c$};
        \draw (602,154.4) node [anchor=north west][inner sep=0.75pt]    {$b\ast c$};
        \draw (535,186.4) node [anchor=north west][inner sep=0.75pt]    {$a\ast c$};
        \draw (416,307.4) node [anchor=north west][inner sep=0.75pt]    {$c$};
        \draw (505,307.4) node [anchor=north west][inner sep=0.75pt]    {$b\ast c$};
        \draw (573,307.4) node [anchor=north west][inner sep=0.75pt]    {$( a\ast c) \ast ( b\ast c)$};

    \end{tikzpicture}

  \end{center}
  \hfill{$\square$}

\end{rem}

Unless otherwise ambiguous, we denote $(Q,\ast)$ simply as $Q$.

\subsection{Alexander pairs and holonomy preserving transformations}

\begin{dfn}[{\cite{Ishii-Oshiro}}]\label{Def-Alexander-pair}
    Let $Q$ be a quandle and $R$ be a unital ring. A pair of maps 
    $f_{1}, f_{2} : Q \times Q \to R$ is called an \textit{Alexander pair} 
    if it satisfies the following conditions:
    \begin{itemize}
        \item[(1)] For all $a \in Q$, $f_{1}(a,a) + f_{2}(a,a) = 1.$
        \item[(2)] For all $a,b \in Q$, $f_{1}(a,b)$ is an invertible element.
        \item[(3)] For all $a,b,c \in Q$, the following hold:
        \begin{itemize}
            \item[(a)] $f_{1}(a \ast b,c) f_{1}(a,b) = f_{1}(a \ast c, b\ast c)f_{1}(a,c).$
            \item[(b)] $f_{1}(a\ast b, c)f_{2}(a,b) = f_{2}(a\ast c, b\ast c)f_{1}(b,c).$
            \item[(c)] $f_{2}(a\ast b,c) = f_{1}(a\ast c, b\ast c) f_{2}(a,c) + f_{2}(a\ast c, b\ast c)f_{2}(b,c).$\hfill{$\square$}
        \end{itemize}
    \end{itemize}
\end{dfn}

\begin{prop}{\cite{ptHopf}} \label{prop Alex-pair-cond}
  Let $(Q, \ast)$ be a quandle and let $(f_{1}, f_{2}) : Q\times Q \to R$ be an Alexander pair. Then $(Q\times R, \star)$ forms a quandle with the following operation:
  \[
  (a,x) \star (b,y) = (a\ast b, f_{1}(a,b)x + f_{2}(a,b)y).
  \]
\end{prop}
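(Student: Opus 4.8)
\emph{Plan.} The proposition is proved by verifying directly the three quandle axioms for $(Q\times R,\star)$, letting the quandle structure of $(Q,\ast)$ govern the first coordinate and the three groups of Alexander pair conditions govern the second. I would organize the verification axiom by axiom.

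\emph{Idempotency and bijectivity of right translations.} For $(a,x)\in Q\times R$, axiom (1) of the quandle $Q$ gives $a\ast a=a$ and Alexander pair condition (1) gives $f_{1}(a,a)+f_{2}(a,a)=1$, so
\[
(a,x)\star(a,x)=\bigl(a\ast a,\;f_{1}(a,a)x+f_{2}(a,a)x\bigr)=\bigl(a,\;(f_{1}(a,a)+f_{2}(a,a))x\bigr)=(a,x).
\]
For the second axiom, fix $(b,y)$. On first coordinates the map $(a,x)\mapsto(a\ast b,\dots)$ acts as $\ast b\colon Q\to Q$, which is bijective by quandle axiom (2); write its inverse as $a'\mapsto a'\ast^{-1}b$. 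Given a target $(a',x')$, any preimage must have first coordinate $a=a'\ast^{-1}b$, and then its second coordinate $x$ is forced to solve $f_{1}(a,b)x+f_{2}(a,b)y=x'$, which has the unique solution $x=f_{1}(a,b)^{-1}\bigl(x'-f_{2}(a,b)y\bigr)$ because $f_{1}(a,b)$ is invertible in $R$ by condition (2). Hence $\cdot\star(b,y)$ is a bijection with this explicit inverse.

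\emph{Self-distributivity.} This is the only computational step. Expanding the two sides,
\[
\bigl((a,x)\star(b,y)\bigr)\star(c,z)=\Bigl((a\ast b)\ast c,\;f_{1}(a\ast b,c)f_{1}(a,b)x+f_{1}(a\ast b,c)f_{2}(a,b)y+f_{2}(a\ast b,c)z\Bigr),
\]
\[
\bigl((a,x)\star(c,z)\bigr)\star\bigl((b,y)\star(c,z)\bigr)=\Bigl((a\ast c)\ast(b\ast c),\;\alpha x+\beta y+\gamma z\Bigr),
\]
where $\alpha=f_{1}(a\ast c,b\ast c)f_{1}(a,c)$, $\beta=f_{2}(a\ast c,b\ast c)f_{1}(b,c)$, and $\gamma=f_{1}(a\ast c,b\ast c)f_{2}(a,c)+f_{2}(a\ast c,b\ast c)f_{2}(b,c)$. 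The first coordinates coincide by quandle axiom (3); the coefficients of $x$, $y$, $z$ in the second coordinates coincide by Alexander pair conditions (3)(a), (3)(b), (3)(c) respectively, and left distributivity in $R$ (in the form $(p+q)z=pz+qz$, used to collect the $z$-terms on the right) then yields equality of the full second coordinates.

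\emph{Expected obstacle.} There is no genuine obstruction here; the only thing demanding care is bookkeeping. Since $R$ is merely a unital ring, every product must be kept in the order in which it arises --- the maps $f_{i}$ act by left multiplication while the coordinate entries $x,y,z$ sit on the right --- and the three Alexander pair relations must be matched precisely against the coefficients of $x$, $y$, $z$ as they appear after the expansion above. (This exact matching also makes transparent the converse direction, should one wish to note that the Alexander pair conditions are necessary as well as sufficient.)
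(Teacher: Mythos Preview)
Your proof is correct and is precisely the direct verification that the paper's one-line proof (``This follows directly from the definition'') leaves to the reader: each quandle axiom for $(Q\times R,\star)$ reduces, coordinate by coordinate, to the corresponding quandle axiom for $Q$ together with the matching Alexander pair condition. There is nothing to add beyond the bookkeeping you have already supplied.
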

\begin{proof}
    This follows directly from the definition.
\end{proof}

Now, we are going to consider ``what weights preserve holonomy'' before and after Reidemeister moves.
Firstly, we consider maps from \(Q\times Q\) to \(M_{n}(\mathbb{C}[t^{\pm 1}]) \). 
Our goal is to investigate the necessary conditions such maps must satisfy in order to define a well-posed knot invariant. 
To do so, we set maps \( g_{1}^{\pm 1}, g_{2}^{\pm 1} : Q\times Q \to M_{n}(\mathbb{C}[t^{\pm 1}]) \) as shown in the following figures.
Later, we will show that in order to define a well-defined knot invariant, \( g \) must satisfy the Alexander pair condition.

\begin{center}
  \begin{tikzpicture}[x=1.2cm,y=1.2cm]
      \node (xi1) at (-1,1) {};
      \node (xi) at (1,-1) {};
      \node (xj1) at (1,1) {};
      \node (xj) at (-1,-1) {};
      \draw (xi1) node [left] {$x_{i+1}$};
      \draw (xi) node [right] {$x_{i}$};
      \draw (xj1) node [right] {$x_{j}$};


      \draw[line width = 1pt, ->] (xi) -> (xi1);
      \draw[color=white, line width = 5pt] (xj) -- (xj1);
      \draw[line width = 1pt, ->] (xj) -> (xj1);

      \draw[arrows={triangle 90-triangle 90}] (2.2,0) -- (3.2,0);

       \node (A0) at (6,-1) {};
       \node (A1) at (5,1) {};
       \node (A2) at (7,1) {};
       \draw (A0) node [below = 3pt] {$+1$};
       \draw (A0) node [right] {$v_{i}$};
       \draw (A1) node [above] {$v_{i+1}$};
       \draw (A2) node [above] {$v_{j}$};

       \fill (A0) circle [radius=1pt];
       \fill (A1) circle [radius=1pt];
       \fill (A2) circle [radius=1pt];

       \draw[line width = 1pt, ->] (A0) to node[left]{$g^{+}_{1} (X_{i}, X_{j})$} (A1);
       \draw[line width = 1pt, ->] (A0) to node[right]{$g^{+}_{2} (X_{i}, X_{j})$} (A2);
  \end{tikzpicture}
\end{center}
\begin{center}
  \begin{tikzpicture}[x=1.2cm,y=1.2cm]
      \node (xi1) at (-1,1) {};
      \node (xi) at (1,-1) {};
      \node (xj1) at (1,1) {};
      \node (xj) at (-1,-1) {};
      \draw (xi1) node [left] {$x_{i+1}$};
      \draw (xi) node [right] {$x_{i}$};
      \draw (xj1) node [right] {$x_{j}$};


      \draw[line width = 1pt, ->] (xi) -> (xi1);
      \draw[color=white, line width = 5pt] (xj) -- (xj1);
      \draw[line width = 1pt, ->] (xj1) -> (xj);

      \draw[arrows={triangle 90-triangle 90}] (2.2,0) -- (3.2,0);

      \node (A0) at (6,-1) {};
      \node (A1) at (5,1) {};
      \node (A2) at (7,1) {};
      \draw (A0) node [below = 3pt] {$-1$};
      \draw (A0) node [right] {$v_{i}$};
      \draw (A1) node [above] {$v_{i+1}$};
      \draw (A2) node [above] {$v_{j}$};

      \fill (A0) circle [radius=1pt];
      \fill (A1) circle [radius=1pt];
      \fill (A2) circle [radius=1pt];

      \draw[line width = 1pt, ->] (A0) to node[left]{$g^{-}_{1} (X_{i}, X_{j})$} (A1);
      \draw[line width = 1pt, ->] (A0) to node[right]{$g^{-}_{2} (X_{i}, X_{j})$} (A2);
  \end{tikzpicture}
\end{center}

Definition \ref{Def-Alexander-pair} shows the conditions the Alexander pair must satisfy. 
To define a knot invariant, it is necessary to ensure that these conditions hold for all pairs \( (X_i, X_j) \in Q \times Q \) that can be realized by varying \( Q(K) \) and \( \rho \). Thus, we define the following notion.
\begin{dfn}
    For knot graphs, $f = (f_{1}, f_{2}) : Q \times Q \to \mathbb{C}[t^{\pm 1}]$ satisfying the Alexander pair condition weight for ``all realizable pairs,'' i.e., any pair $(X_i, X_j) \in Q \times Q$ that can be realized by varying $Q(K)$ and $\rho$, is called \textit{$f$-twisted weight}.


    \begin{center}
        \begin{tikzpicture}[x=1.0cm,y=1.0cm]
            \node (A0) at (0,-1) {};
            \node (A1) at (-1,1) {};
            \node (A2) at (1,1) {};
            \draw (A0) node [below = 3pt] {$+1$};
            \draw (A0) node [right] {$v_{i}$};
            \draw (A1) node [above] {$v_{i+1}$};
            \draw (A2) node [above] {$v_{j}$};
            
            \fill (A0) circle [radius=1pt];
            \fill (A1) circle [radius=1pt];
            \fill (A2) circle [radius=1pt];
            
            \draw[line width = 1pt, arrows={ - latex}] (A0) to node[left] {\small$f_{1} (X_{i}, X_{j})^{-1}$} (A1);
            \draw[line width = 1pt, arrows={ - latex}] (A0) to node[right] {\small$-f_{1} (X_{i}, X_{j})^{-1}f_{2}(X_{i},X_{j})$} (A2);
        \end{tikzpicture}
        \hspace{10mm} 
        \begin{tikzpicture}[x=1.0cm,y=1.0cm]
            \node (A0) at (0,-1) {};
            \node (A1) at (-1,1) {};
            \node (A2) at (1,1) {};
            \draw (A0) node [below = 3pt] {$-1$};
            \draw (A0) node [right] {$v_{i}$};
            \draw (A1) node [above] {$v_{i+1}$};
            \draw (A2) node [above] {$v_{j}$};
            
            \fill (A0) circle [radius=1pt];
            \fill (A1) circle [radius=1pt];
            \fill (A2) circle [radius=1pt];
            
            \draw[line width = 1pt, arrows={ - latex}] (A0) to node[left] {\small$f_{1} (X_{i}\ast^{-1}X_{j}, X_{j})$} (A1);
            \draw[line width = 1pt, arrows={ - latex}] (A0) to node[right] {\small$f_{2} (X_{i}\ast^{-1}X_{j}, X_{j})$} (A2);
        \end{tikzpicture}
    \end{center}

    \vspace{-30pt}
    \begin{flushright}
        $\square$
    \end{flushright}
\end{dfn}
We describe the graph transformations corresponding to the Reidemeister moves. Note that for R1-2, a loop is included in the graph, so we exclude this case from our consideration for now. For clarity, we separate the description of Reidemeister move 2 into distinct cases and label the corresponding diagrams as (A) through (C).

\begin{tikzpicture}
    \node (a) at (0,1) {\large (A) : };
    \node (empty) at (0,0) {};
\end{tikzpicture}%
\ \hspace{50pt}
\begin{tikzpicture}[x=1.6cm,y=1.6cm]
    \node (vi) at (-1,0) {};
    \node (vi1) at (1,0) {};
    \draw (vi) node [left] {$v_{i}$};
    \draw (vi1) node [right] {$v_{i+1}$};
    
    \fill (vi) circle [radius=1pt];
    \fill (vi1) circle [radius=1pt];
    
    \draw[line width = 1pt, arrows={ - latex}] (vi) to [out=20, in=160] node[above] {$g^{-}_{1}(X_{i}, X_{i+1})$} (vi1);
    \draw[line width = 1pt, arrows={ - latex}] (vi) to [out=-20, in=-160] node[below] {$g^{-}_{2}(X_{i}, X_{i+1})$} (vi1);
    
    \draw[arrows={triangle 90-triangle 90}] (2,0) -- (3,0);
    
    \coordinate (vi') at (4,0) node at (vi') [left] {$v_{i}$};
    
    \fill (vi') circle [radius=1pt];
\end{tikzpicture}\\

\begin{tikzpicture}
  \node (a) at (0,1.5) {\large (B-1) : };
  \node (empty) at (0,0) {};
\end{tikzpicture}%
\ \
\begin{tikzpicture}[x=1.3cm,y=1.3cm]
 \node (vi) at (-1,1) {};
 \node (vj) at (-0,1) {};
 \node (vj1) at (0,0) {};
 \node (vj2) at (0,-1) {};
 \draw (vi) node [left] {$v_{i}$};
 \draw (vj) node [right] {$v_{j}$};
 \draw (vj1) node [right] {$v_{j+1}$};
 \draw (vj2) node [right] {$v_{j+2}$};

 \fill (vi) circle [radius=1pt];
 \fill (vj) circle [radius=1pt];
 \fill (vj1) circle [radius=1pt];
 \fill (vj2) circle [radius=1pt];

 \draw[line width = 1pt, arrows={ - latex}] (vj) to node[above] {\small$g^{-}_{2}(X_{j}, X_{i})$} (vi);
 \draw[line width = 1pt, arrows={ - latex}] (vj) to node[right] {\small$g^{-}_{1}(X_{j}, X_{i})$} (vj1);
 \draw[line width = 1pt, arrows={ - latex}] (vj1) to  node[right] {\small$g^{+}_{1}(X_{j+1}, X_{i})$} (vj2);
 \draw[line width = 1pt, arrows={ - latex}] (vj1) to node[below left] {\small$g^{+}_{2}(X_{j+1}, X_{i})$} (vi);


 \draw[arrows={triangle 90-triangle 90}] (1.5,0) -- (2.5,0);

 \coordinate (vi') at (3,0) node at (vi') [left] {$v_{i}$};
 \coordinate (vj') at (4,0) node at (vj') [right] {$v_{j}$};

 \fill (vi') circle [radius=1pt];
 \fill (vj') circle [radius=1pt];

 \draw[arrows={triangle 90-triangle 90}] (4.5,0) -- (5.5,0);

 \node (vi'') at (7,1) {};
 \node (vi1'') at (7,0) {};
 \node (vi2'') at (7,-1) {};
 \node (vj'') at (8,1) {};
 \draw (vi'') node[left] {$v_{i}$};
 \draw (vi1'') node[left] {$v_{i+1}$};
 \draw (vi2'') node[left] {$v_{i+2}$};
 \draw (vj'') node[right] {$v_{j}$};

 \fill (vi'') circle [radius=1pt];
 \fill (vj'') circle [radius=1pt];
 \fill (vi1'') circle [radius=1pt];
 \fill (vi2'') circle [radius=1pt];

 \draw[line width = 1pt, arrows={ - latex}] (vi'') to node[above] {\small$g^{+}_{2}(X_{i}, X_{j})$} (vj'');
 \draw[line width = 1pt, arrows={ - latex}] (vi'') to  node[left] {\small$g^{+}_{1}(X_{i}, X_{j})$} (vi1'');
 \draw[line width = 1pt, arrows={ - latex}] (vi1'') to node[below right] {\small$g^{-}_{2}(X_{i+1}, X_{j})$} (vj'');
 \draw[line width = 1pt, arrows={ - latex}] (vi1'') to node[left] {\small$g^{-}_{1}(X_{i+1}, X_{j})$} (vi2'');


\end{tikzpicture}\\

\begin{tikzpicture}
  \node (a) at (0,1.5) {\large (B-2) : };
  \node (empty) at (0,0) {};
\end{tikzpicture}%
\ \hspace{50pt}
\begin{tikzpicture}[x=1.3cm,y=1.3cm]
 \node (vi) at (-1,1) {};
 \node (vj) at (-0,1) {};
 \node (vi1) at (-1,0) {};
 \node (vi2) at (-1,-1) {};
 \draw (vi) node [left] {$v_{i}$};
 \draw (vj) node [right] {$v_{j}$};
 \draw (vi1) node [right] {$v_{j+1}$};
 \draw (vi2) node [right] {$v_{j+2}$};

 \fill (vi) circle [radius=1pt];
 \fill (vj) circle [radius=1pt];
 \fill (vi1) circle [radius=1pt];
 \fill (vi2) circle [radius=1pt];

 \draw[line width = 1pt, arrows={ - latex}] (vi) to node[above] {\small$g^{-}_{2}(X_{i}, X_{j})$} (vj);
 \draw[line width = 1pt, arrows={ - latex}] (vi) to node[left] {\small$g^{-}_{1}(X_{i}, X_{j})$} (vi1);
 \draw[line width = 1pt, arrows={ - latex}] (vi1) to  node[right] {\small$g^{+}_{2}(X_{i+1}, X_{j})$} (vj);
 \draw[line width = 1pt, arrows={ - latex}] (vi1) to node[left] {\small$g^{+}_{1}(X_{i+1}, X_{j})$} (vi2);


 \draw[arrows={triangle 90-triangle 90}] (1.2,0) -- (2.2,0);

 \coordinate (vi') at (3,0) node at (vi') [left] {$v_{i}$};
 \coordinate (vj') at (4,0) node at (vj') [right] {$v_{j}$};

 \fill (vi') circle [radius=1pt];
 \fill (vj') circle [radius=1pt];

\end{tikzpicture}\\

\begin{tikzpicture}
  \node (a) at (0,2) {\large (C) : };
  \node (empty) at (0,0) {};
\end{tikzpicture}%
\ \
\begin{tikzpicture}[x=1.5cm,y=1.5cm]
 \node (vi) at (-1.3,1) {};
 \node (vi1) at (0,1) {};
 \node (vi2) at (1.3,1) {};
 \node (vj) at (-1.3, 0) {};
 \node (vj1) at (0,0) {};
 \node (vk) at (-1.3,-1) {};
 \draw (vi) node[left] {$v_{i}$};
 \draw (vi1) node[above] {$v_{i+1}$};
 \draw (vi2) node[right] {$v_{i+2}$};
 \draw (vj) node[left] {$v_{j}$};
 \draw (vj1) node[right] {$v_{j+1}$};
 \draw (vk) node[below] {$v_{k}$};

 \fill (vi) circle [radius=1pt];
 \fill (vi1) circle [radius=1pt];
 \fill (vi2) circle [radius=1pt];
 \fill (vj) circle [radius=1pt];
 \fill (vj1) circle [radius=1pt];
 \fill (vk) circle [radius=1pt];

 \draw[line width = 1pt, arrows={ - latex}] (vi) to [out=30, in=150] node[above] {\footnotesize$g^{+}_{1}(X_{i}, X_{j})$} (vi1);
 \draw[line width = 1pt, arrows={ - latex}] (vi) to node[left] {\footnotesize$g^{+}_{2}(X_{i}, X_{j})$} (vj);
 \draw[line width = 1pt, arrows={ - latex}] (vi1) to [out=30, in=150] node[above] {\footnotesize$g^{+}_{1}(X_{i+1}, X_{k})$} (vi2);
 \draw[line width = 1pt, arrows={ - latex}] (vi1) to [out=330, in=0, looseness = 1.8] node[right] {\footnotesize$g^{+}_{2}(X_{i+1}, X_{k})$} (vk);
 \draw[line width = 1pt, arrows={ - latex}] (vj) to node[above] {\footnotesize$g^{+}_{1}(X_{j}, X_{k})$} (vj1);
 \draw[line width = 1pt, arrows={ - latex}] (vj) to node[left] {\footnotesize$g^{+}_{2}(X_{j}, X_{k})$} (vk);

 \draw[arrows={triangle 90-triangle 90}] (2,0) -- (3,0);

 \node (vi') at (4.7,1) {};
 \node (vi1') at (6,1) {};
 \node (vi2') at (7.3,1) {};
 \node (vj') at (4.7, 0) {};
 \node (vj1') at (6,0) {};
 \node (vk') at (4.7,-1) {};
 \draw (vi') node[left] {\footnotesize$v_{i}$};
 \draw (vi1') node[above] {\footnotesize$v_{i+1}$};
 \draw (vi2') node[right] {\footnotesize$v_{i+2}$};
 \draw (vj') node[left] {\footnotesize$v_{j}$};
 \draw (vj1') node[right] {\footnotesize$v_{j+1}$};
 \draw (vk') node[below] {\footnotesize$v_{k}$};

 \fill (vi') circle [radius=1pt];
 \fill (vi1') circle [radius=1pt];
 \fill (vi2') circle [radius=1pt];
 \fill (vj') circle [radius=1pt];
 \fill (vj1') circle [radius=1pt];
 \fill (vk') circle [radius=1pt];

 \draw[line width = 1pt, arrows={ - latex}] (vi') to [out=30, in=150] node[above] {\footnotesize$g^{+}_{1}(X_{i}, X_{k})$} (vi1');
 \draw[line width = 1pt, arrows={ - latex}] (vi') to [out=240, in=120, looseness = 1.3] node[left] {\footnotesize$g^{+}_{2}(X_{i}, X_{k})$} (vk');
 \draw[line width = 1pt, arrows={ - latex}] (vi1') to [out=30, in=150] node[above] {\footnotesize$g^{+}_{1}(X_{i+1}, X_{j+1})$} (vi2');
 \draw[line width = 1pt, arrows={ - latex}] (vi1') to node[right] {\footnotesize$g^{+}_{2}(X_{i+1}, X_{j+1})$} (vj1');
 \draw[line width = 1pt, arrows={ - latex}] (vj') to node[above] {\footnotesize$g^{+}_{1}(X_{j}, X_{k})$} (vj1');
 \draw[line width = 1pt, arrows={ - latex}] (vj') to node[right] {\footnotesize$g^{+}_{2}(X_{j}, X_{k})$} (vk');

\end{tikzpicture}

To preserve holonomy, the followings have to be held\footnote{For (C), the end points of edges from $v_{i}$ are only $v_{i+1}, v_{k}$. So $w(v_{i}\to v_{i+1})$ need not to be preserved.}:
\[
\begin{aligned}
    (A) \quad &g_{1}^{-}(X_{i}, X_{i+1})+g_{2}^{-}(X_{i}, X_{i+1})=1.\\
    (B-1) \quad &g_{2}^{-}(X_{j}, X_{i})+g_{1}^{-}(X_{j}, X_{i})g_{2}^{+}(X_{j+1}, X_{i}) = g_{2}^{+}(X_{i}, X_{j})+g_{1}^{+}(X_{i}, X_{j})g_{2}^{-}(X_{i+1}, X_{i})= 0.\\
    \quad &g_{1}^{-}(X_{j}, X_{i})g_{1}^{+}(X_{j+1}, X_{i}) = g_{1}^{+}(X_{i}, X_{j})g_{1}^{-}(X_{i+1}, X_{j}) = 1\\
    (B-2) \quad &g_{2}^{-}(X_{i}, X_{j})+g_{1}^{-}(X_{i}, X_{j})g_{2}^{+}(X_{i+1}, X_{j}) = 0\\
    \quad &g_{1}^{-}(X_{i}, X_{j})g_{1}^{+}(X_{i+1}, X_{j}) =1\\
    (C) \quad &g_{1}^{+}(X_{i}, X_{j})g_{1}^{+}(X_{i+1}, X_{k}) = g_{1}^{+}(X_{i}, X_{k})g_{1}^{+}(X_{i+1}, X_{j+1})\\
    \quad &g_{2}^{+}(X_{i}, X_{j})g_{1}^{+}(X_{j}, X_{k})=g_{1}^{+}(X_{i}, X_{k})g_{2}^{+}(X_{i+1}, X_{j+1})\\
    \quad &g_{1}^{+}(X_{i}, X_{j})g_{2}^{+}(X_{i+1}, X_{k})+ g_{2}^{+}(X_{i}, X_{j})g_{1}^{+}(X_{j}, X_{k}) = g_{2}^{+}(X_{i}, X_{k})
\end{aligned}
\]

\begin{thm}\label{Thm-PHiffAlexPair}
    The following are equivalent.
    \begin{itemize}
        \item[(1)] $g=(g_{1}^{+}, g_{1}^{-}, g_{2}^{+}, g_{2}^{-})$ preserves holonomy, that is, (A) through (C) holds.
        \item[(2)] $g=(g_{1}^{+}, g_{1}^{-}, g_{2}^{+}, g_{2}^{-})$ is f-twisted Alexander weight.
    \end{itemize}
\end{thm}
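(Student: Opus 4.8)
The plan is to exhibit an explicit dictionary between the four maps $g=(g_{1}^{+},g_{1}^{-},g_{2}^{+},g_{2}^{-})$ and a single pair $(f_{1},f_{2})$, and then to show that, through this dictionary, each holonomy-preservation equation (A), (B-1), (B-2), (C) is equivalent to one of the Alexander pair axioms of Definition~\ref{Def-Alexander-pair}, possibly after invoking the axioms already established. The guiding principle is Proposition~\ref{prop Alex-pair-cond}: the operation $(a,x)\star(b,y)=(a\ast b,\,f_{1}(a,b)x+f_{2}(a,b)y)$ attached to an Alexander pair is precisely the rule by which the ``Fox-derivative holonomy'' propagates through a crossing, so ``holonomy is preserved by Reidemeister moves'' ought to be the same statement as ``$\star$ is a quandle operation'', i.e. the Alexander pair condition.

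First I would record the dictionary, read off from the two figures defining an $f$-twisted weight. A positive crossing whose base-point arc is $X_{i}$ gives
\[
  g_{1}^{+}(a,b)=f_{1}(a,b)^{-1},\qquad g_{2}^{+}(a,b)=-\,f_{1}(a,b)^{-1}f_{2}(a,b),
\]
and a negative crossing gives
\[
  g_{1}^{-}(a,b)=f_{1}(a\ast^{-1}b,\,b),\qquad g_{2}^{-}(a,b)=f_{2}(a\ast^{-1}b,\,b).
\]
The inverses on the positive side appear because the graph edge points from the base-point arc $v_{i}$ to the other two arcs, so the weights record the \emph{inverse} of the propagation rule $X_{i}\mapsto X_{i}\ast X_{j}=X_{i+1}$. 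For the implication $(2)\Rightarrow(1)$ this is just a definition: starting from an Alexander pair $(f_{1},f_{2})$ valid on all realizable pairs, define $g$ by these formulas (axiom~(2) makes the inverses legitimate), and then obtain each equation of (A)--(C) by substitution and simplification, using axioms (1), (2), (3)(a), (3)(b), (3)(c) in that order. For $(1)\Rightarrow(2)$ I must instead \emph{extract} $f$ from $g$: the equations in (B-1)--(B-2) amount, on realizable pairs, to $g_{1}^{-}(a,b)\,g_{1}^{+}(a\ast^{-1}b,b)=1$ together with $g_{2}^{-}(a,b)+g_{1}^{-}(a,b)\,g_{2}^{+}(a\ast^{-1}b,b)=0$, which both force $g_{1}^{+}$ to be invertible (this is axiom~(2)) and pin down $g^{-}$ as the stated function of $g^{+}$; one then sets $f_{1}:=(g_{1}^{+})^{-1}$, $f_{2}:=-(g_{1}^{+})^{-1}g_{2}^{+}$ and verifies the remaining axioms.

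The substance of the proof is the R3 case (C). Writing $a=X_{i}$, $b=X_{j}$, $c=X_{k}$ for the three incoming arcs, before the move one has $X_{i+1}=a\ast b$, $X_{j+1}=b\ast c$, $X_{i+2}=(a\ast b)\ast c$, while after the move $X_{i+1}=a\ast c$ and $X_{i+2}=(a\ast c)\ast(b\ast c)$; the equality $(a\ast b)\ast c=(a\ast c)\ast(b\ast c)$ is exactly what makes the two group-weighted graphs share a common vertex set. I would then translate the three nontrivial equations of (C): preservation of $w(v_{i}\to v_{i+2})$ becomes, after cancelling the $f_{1}^{-1}$'s, precisely axiom~(3)(a); given (3)(a), preservation of $w(v_{i}\to v_{j+1})$ becomes (3)(b); and given (3)(a) and (3)(b), preservation of $w(v_{i}\to v_{k})$ becomes (3)(c). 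Case (A) -- a kink, which by the footnote and Proposition~\ref{Prop-Base-Choice} we may take so that its two arguments coincide -- becomes $f_{1}(a,a)+f_{2}(a,a)=1$, i.e. axiom~(1), and cases (B-1), (B-2) supply axiom~(2) together with the consistency relations just used to extract $f$. Since each translation step is reversible, the same computations run backwards give $(2)\Rightarrow(1)$.

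The main obstacle is careful bookkeeping rather than any deep idea. One must keep strict track of which quandle element occupies each slot of each $f_{i}$ or $g_{i}^{\pm}$; in particular, the \emph{label} $X_{i+1}$ denotes different arcs before and after an R3 move (it is $a\ast b$ before and $a\ast c$ after), and one must not conflate the quandle input of a crossing with the data the edge weight actually carries, which accounts for the $f_{1}^{-1}$ and $-f_{1}^{-1}f_{2}$ on the positive side. A secondary subtlety is the ``realizable pairs'' clause: the holonomy conditions constrain $g$ only on pairs $(X_{i},X_{j})$ arising from some diagram and some $\rho$, and accordingly $(f_{1},f_{2})$ need only satisfy the Alexander pair axioms on that same set; one should note that this set of pairs is identical on both sides, so that the asserted equivalence is genuinely an equivalence of conditions quantified over exactly this class.
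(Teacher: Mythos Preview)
Your proposal is correct and follows essentially the same approach as the paper: both extract $(f_{1},f_{2})$ from $g$ via the (B) conditions, establish the same dictionary, and then match (A) and (C) against the Alexander pair axioms—indeed you spell out the (C)$\,\Leftrightarrow\,$(3)(a)--(c) correspondence in more detail than the paper, which simply calls it ``straightforward.'' One small imprecision: the reason the two arguments in (A) coincide is the quandle axiom $a\ast a=a$ (forcing $X_{i}=X_{i+1}$), not the footnote or Proposition~\ref{Prop-Base-Choice}, which concern the group-weighted setting.
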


\begin{proof}
  $(1) \Longrightarrow (2)$. Since (B-1) and (B-2) hold, we have
  \[
    \begin{cases}
    g_{1}^{-}(X_{j}, X_{i})g_{1}^{+}(X_{j+1}, X_{i}) = 1,\\
    g_{1}^{+}(X_{i}, X_{j})g_{1}^{-}(X_{i+1}, X_{j}) = 1,\\
    g_{1}^{-}(X_{i}, X_{j})g_{1}^{+}(X_{i+1}, X_{j}) =1.
    \end{cases}
  \]
  Applying $\ast^{-1} X_{j}$ to the second equation, and using $X_{i} * X_{j} = X_{i+1}$, we obtain
  \[
    g^{+}_{1}(X_{i} \ast^{-1}X_{j}, X_{j}) g^{-}_{1}(X_{i}, X_{j}) = 1.
  \]
  We then define
  \[
    f_{1}(X_{i}, X_{j}) := g^{-}_{1}(X_{i} \ast X_{j}, X_{j}).
  \]
  Since $f_{1}(X_{i}, X_{j})$ is invertible, it follows that
  \[
    g^{+}_{1}(X_{i}, X_{j}) = ( g^{-}_{1}(X_{i} \ast X_{j}, X_{j}) )^{-1} = f_{1}(X_{i}, X_{j})^{-1}.
  \]
  Next, we rewrite (B-1) and (B-2) by applying $\ast^{-1} X_{j}$:
  \[
    \begin{cases}
       g^{-}_{2}(X_{j}, X_{i}) + g^{-}_{1}(X_{j}, X_{i})g^{+}_{2}(X_{j} \ast^{-1} X_{i}, X_{i}) = 0,\\
       g^{+}_{2}(X_{i}, X_{j}) + g^{+}_{1}(X_{i}, X_{j})g^{-}_{2}(X_{i} \ast X_{j}, X_{j}) = 0,\\
       g^{-}_{2}(X_{i}, X_{j}) + g^{+}_{1}(X_{i},X_{j}) g^{-}_{2}(X_{i} \ast^{-1} X_{j}, X_{k}) = 0.
    \end{cases}
  \]
  We define
  \[
    f_{2}(X_{i}, X_{j}) := g^{-}_{2}(X_{i} \ast X_{j}, X_{j}).
  \]
  Then, we obtain
  \[
    g^{+}_{2}(X_{i}, X_{j})
    = - g^{+}_{1}(X_{i}, X_{j})g^{-}_{2}(X_{i} \ast X_{j}, X_{j})
    = -f_{1}(X_{i}, X_{j})^{-1}f_{2}(X_{i}, X_{j}).
  \]
  Since \( g_1^{\pm 1}, g_2^{\pm 1} \) are already expressed in terms of \( f_1, f_2 \), verifying that the given conditions hold is straightforward. For the implication \( (2) \Rightarrow (1) \), it suffices to trace the previous argument in reverse.
\end{proof}

\begin{prop}
    For all {\it f-twisted weight}, the matrix-zeta function is determined up to unit in $\mathbb{C}[t^{\pm 1}]$.
\end{prop}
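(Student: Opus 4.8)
The plan is to prove that, once an $f$-twisted weight $f=(f_{1},f_{2})$ is fixed, the matrix-weighted zeta function of the knot graph attached to a diagram depends only on the knot $K$ (and the quandle colouring feeding the graph), up to multiplication by a unit of $\mathbb{C}[t^{\pm1}]$. This breaks into two invariance statements: independence of the choices made in building the graph from a fixed diagram, and invariance under Reidemeister moves.

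First I would handle the construction-dependent choices. Assembling the knot graph requires choosing, for each crossing relation, which occurrence of a generator is its base point, choosing which single relation to delete (producing one sink vertex), and fixing an ordering of the relations. Exactly as in Proposition~\ref{Prop-Base-Choice}, reordering the relations amounts to row operations on $I-A(G,w)$ and hence changes the zeta function only by $\pm1$ via Theorem~\ref{Thm-IharaBassDetFomula}; changing a base point amounts to the left/right multiplications by invertible weights treated there, and since the local weights $g_{1}^{\pm},g_{2}^{\pm}$ of an $f$-twisted weight are exactly the combinations of the invertible $f_{1}$ and of $f_{2}$ produced in Theorem~\ref{Thm-PHiffAlexPair}, the same computation shows the zeta function is well-defined up to a unit of $\mathbb{C}[t^{\pm1}]$.

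Next I would run through the Reidemeister moves. Each move alters the knot graph only inside a bounded region, and the induced local change is one of the transformations (A), (B-1), (B-2), (C) drawn above, the loop case R1-2 being set aside as noted. By Theorem~\ref{Thm-PHiffAlexPair}, an $f$-twisted weight preserves holonomy, so the identities (A) through (C) hold for every realizable pair $(X_{i},X_{j})$; these are precisely the relations among the weights that make the accumulated weights $w(v\to v')$ between the surviving vertices agree on the two sides of the move. I would then show, case by case, that the ``before'' and ``after'' graphs are joined by a finite sequence of elementary transformations of matrix-weighted graphs, reproducing for (A), (B-1), (B-2) the reduction already carried out for (C) in \S4.1, where both graphs were brought by weight summation and hub-vertex resolution to one common graph. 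Equality of the two zeta functions across each move then follows from Theorem~\ref{Thm-TrfmPreserveZeta}, and combining this with the first step yields the proposition.

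The main obstacle is this last case analysis: one must check that in each of (A), (B-1), (B-2), (C) the holonomy identities genuinely license the required steps — splitting and summing weights, source/sink elimination, hub-vertex resolution — and keep track of the auxiliary vertices that R1 and R2 create and destroy, as well as the sink vertex coming from the omitted relation, making sure none of these obstruct reduction to a common graph. Once that verification is in place, Theorems~\ref{Thm-IharaBassDetFomula}, \ref{Thm-TrfmPreserveZeta} and \ref{Thm-PHiffAlexPair} together show the zeta function is an invariant of $K$ up to units in $\mathbb{C}[t^{\pm1}]$.
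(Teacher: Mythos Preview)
Your plan has a genuine gap: you set aside the case R1-2 (``the loop case R1-2 being set aside as noted'') and never return to it, yet this is precisely the case the proposition is meant to handle. The holonomy identities (A)--(C) cover R1-1, the two R2 configurations, and R3; for these the zeta function is preserved \emph{exactly} by Theorems~\ref{Thm-TrfmPreserveZeta} and~\ref{Thm-PHiffAlexPair}, so no ``up to units'' is needed there. The reason R1-2 was excluded from the holonomy analysis is that it produces a self-loop at $v_{i}$ with weight $g_{2}^{+}(X_{i},X_{i})=-f_{1}(X_{i},X_{i})^{-1}f_{2}(X_{i},X_{i})$, and the hub-vertex resolution (5) requires $v_{1}\neq v_{2}$, so the elementary transformations cannot reduce this graph to the trivial one. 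The ``up to units'' in the statement is exactly the cost of R1-2.

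The paper's proof is therefore entirely devoted to R1-2. It writes down the $i$-th row of $I-A(G,w)$, uses $f_{1}(X_{i},X_{i})+f_{2}(X_{i},X_{i})=1$ to see that this row is $[\cdots\; f_{1}(X_{i},X_{i})\;\; -f_{1}(X_{i},X_{i})\;\cdots]$ with all other entries zero, and then observes via Theorem~\ref{Thm-IharaBassDetFomula} that dividing out the common factor $f_{1}(X_{i},X_{i})$ (a unit, since $f_{1}$ takes invertible values) leaves the row $[\cdots\;1\;\;-1\;\cdots]$, which is the adjacency row for the graph $v_{j}\to v_{i}\xrightarrow{1} v_{i+1}$ without the loop. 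Hence the zeta functions before and after R1-2 differ by the unit $f_{1}(X_{i},X_{i})^{-1}$. Your outline needs this computation; without it the proposition is not proved.
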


\begin{proof}
    What we have to consider is the case (R1-2). The corresponding graph of right-hand side of (R1-2) is as follows:
    \begin{center}
        \begin{tikzpicture}[x=1.4cm,y=1.4cm]
          \node (vj) at (-2,0) {};
          \node (vi) at (0,0) {};
          \node (vi1) at (2,0) {};
          \node (weight) at (0,0.9) {$f_{2}(X_{i}, X_{i})$};
    
    
          \fill (vj) circle [radius=1pt];
          \fill (vi) circle [radius=1pt];
          \fill (vi1) circle [radius=1pt];
    
          \draw (vj) node [left] {$v_{j}$};
          \draw (vi) node [below] {$v_{i}$};
          \draw (vi1) node [right] {$v_{i+1}$};
    
          \draw[line width = 1pt, arrows={ - latex}] (vj) to node[above]{$u$} (vi);
          \draw[line width = 1pt, arrows={ - latex}] (vi) to node[above]{$f_{1}(X_{i}, X_{i})$} (vi1);
          \path[line width = 1pt, arrows={ - latex}] (vi) edge [out=90-30, in=90+30, distance=12mm] (vi);
    \end{tikzpicture}
  \end{center}
    Since $f_{2}(X_{i}, X_{i}) = 1-f_{i}(X_{i}, X_{i})$ and $f_{1}(X_{i}, X_{i})$ is invertible, $f_{2}(X_{i}, X_{i}) \neq 1$. Then $i-$th column of $I-$(Adjacency matrix) is 
  \[
    \begin{bmatrix}
      \cdots & 1-f_{2}(X_{i}, X_{i}) & -f_{1}(X_{i}, X_{i}) &\cdots
    \end{bmatrix}
    = 
    \begin{bmatrix}
      \cdots & f_{1}(X_{i}, X_{i}) & -f_{1}(X_{i}, X_{i}) &\cdots
    \end{bmatrix}.
  \]
  The other elements of $i-$th column are zero, and multiplying $f_{1}(X_{i}, X_{i})$ for $i$-th column of $I-$(Adjacency matrix), it becomes
  \[
    \begin{bmatrix}
      \cdots & 1 & -1 &\cdots
    \end{bmatrix}
  \]
  The corresponding graph is as follows.
  \begin{center}
    \begin{tikzpicture}[x=1.4cm,y=1.4cm]
      \node (vj) at (-2,0) {};
      \node (vi) at (0,0) {};
      \node (vi1) at (2,0) {};


      \fill (vj) circle [radius=1pt];
      \fill (vi) circle [radius=1pt];
      \fill (vi1) circle [radius=1pt];

      \draw (vj) node [left] {$v_{j}$};
      \draw (vi) node [below] {$v_{i}$};
      \draw (vi1) node [right] {$v_{i+1}$};

      \draw[line width = 1pt, arrows={ - latex}] (vj) to node[above]{$u$} (vi);
      \draw[line width = 1pt, arrows={ - latex}] (vi) to node[above]{$1$} (vi1);

    \end{tikzpicture}
  \end{center}
  Then the difference of the zeta functions before and after the (R1-2) is $f_{1}(X_{i}, X_{i})^{-1} \in \mathbb{C}[t^{\pm 1}]^{\times}$
\end{proof}

\begin{rem}
As mentioned in Proposition \ref{prop Alex-pair-cond}, since \( f = (f_1, f_2) \) is an Alexander pair, the set \( Q \times R \) inherits a quandle structure via the operation
\[
(a, x) \mathbin{\star} (b, y) = (a \ast b, f_1(a, b)x + f_2(a, b)y).
\]
This can be interpreted geometrically: we may think of \( Q \times R \) as a kind of covering space over \( Q \), with \( R \) as the fiber. The operation then describes how an element in the fiber moves when transported along a path in the base space.

In this setting, we interpret a cycle in the graph as an element of its fundamental group. Lifting such a cycle to the covering space \( Q \times R \), the difference between the starting and ending points in the fiber reflects the holonomy associated with that cycle. The matrix \( I - w(C) \) thus records this holonomy, capturing the accumulated effect of moving through the cycle \( C \).

A holonomy preserving transformation, then, ensures that equivalent cycles lift to paths with the same endpoints in \( Q \times R \), thereby preserving the holonomy structure of the graph.
\hfill \( \square \)
\end{rem}


\begin{thebibliography}{SMM14}

\bibitem[AG03]{ptHopf}
N.~Andruskiewitsch and M.~Gra{\~n}a, \emph{From racks to pointed hopf
  algebras}, Advances in Mathematics \textbf{178} (2003), no.~2, 177--243.

\bibitem[Ale28]{Alexander}
J.~W. Alexander, \emph{Topological invariants of knots and links}, Transactions
  of the American Mathematical Society \textbf{30} (1928), no.~2, 275--306.

\bibitem[God20]{H-Goda}
H.~Goda, \emph{Twisted alexander polynomial and matrix-weighted zeta function},
  Kyushu Journal of Mathematics \textbf{74} (2020), no.~1, 211--221.

\bibitem[Iha66]{Ihara}
Y.~Ihara, \emph{On discrete subgroups of the two by two projective linear group
  over p-adic fields}, Journal of the Mathematical Society of Japan \textbf{18}
  (1966), no.~3, 219--235.

\bibitem[IO22]{Ishii-Oshiro}
A.~Ishii and K.~Oshiro, \emph{Derivatives with alexander pairs for quandles},
  Fundamenta Mathematicae \textbf{259} (2022), 1--31.

\bibitem[Joy82]{Joyce}
D.~Joyce, \emph{A classifying invariant of knots, the knot quandle}, Journal of
  Pure and Applied Algebra \textbf{23} (1982), no.~1, 37--65.

\bibitem[Lin01]{Lin}
X.~S. Lin, \emph{Representations of knot groups and twisted alexander
  polynomials}, Acta Mathematica Sinica \textbf{17} (2001), 361--380.

\bibitem[Oht01]{Otsuki-Quantum}
T.~Ohtsuki, \emph{Quantum invariants: A study of knots, 3-manifolds, and their
  sets}, vol.~29, World Scientific, 2001.

\bibitem[SMM14]{Mat-Wt-Graph-Intro}
I.~Sato, H.~Mitsuhashi, and H.~Morita, \emph{A matrix-weighted zeta function of
  a graph}, Linear and Multilinear Algebra \textbf{62} (2014), no.~1, 114--125.

\bibitem[Wad94]{M-Wada}
M.~Wada, \emph{Twisted alexander polynomial for finitely presentable groups},
  Topology \textbf{33} (1994), no.~2, 241--256.

\bibitem[WF11]{Watanabe-Fukumizu}
Y.~Watanabe and K.~Fukumizu, \emph{Loopy belief propagation, bethe free energy
  and graph zeta function}, 2011, arXiv preprint arXiv:1103.0605.

\end{thebibliography}
\providecommand{\bysame}{\leavevmode\hbox to3em{\hrulefill}\thinspace}
\providecommand{\MR}{\relax\ifhmode\unskip\space\fi MR }
\providecommand{\MRhref}[2]{%
  \href{http://www.ams.org/mathscinet-getitem?mr=#1}{#2}
}
\providecommand{\href}[2]{#2}

\end{document}